\makeatletter \@addtoreset{equation}{section} \makeatother
\renewcommand\thetable{\thesection.\@arabic\c@table}
\theoremstyle{plain}
\newtheorem{maintheorem}{Theorem}
\newtheorem{mainconjecture}{Conjecture}
\newtheorem{maincorollary}{Corollary}
\newtheorem{mainproblem}{Problem}
\newtheorem{mainquestion}{Question}
\newtheorem{theorem}{Theorem }[section]
\newtheorem{proposition}[theorem]{Proposition}
\newtheorem{lemma}[theorem]{Lemma}
\newtheorem{corollary}[theorem]{Corollary}
\theoremstyle{definition} \theoremstyle{remark}
\newtheorem{remark}[theorem]{Remark}
\newtheorem{example}[theorem]{Example}
\newtheorem{definition}[theorem]{Definition}
\newcommand{\Sc}{\mathbb{S}}
\newcommand{\Lo}{\mathcal{L}}
\newcommand{\C}{\mathbb{C}}
\newcommand{\Z}{\mathbb{Z}}
\newcommand{\N}{\mathbb{N}}
\newcommand{\R}{\mathbb{R}}
\newcommand{\Leb}{\operatorname{Leb}}
\begin{document}

\title{Thermodynamical and spectral phase transition for local diffeomorphisms in the circle}

\author{ Thiago Bomfim$^{\ast}$ and Victor Carneiro}

\address{Departamento de Matem\'atica, Universidade Federal da Bahia\\
Av. Ademar de Barros s/n, 40170-110 Salvador, Brazil.}
\email{$^{\ast}$Corresponding Author: tbnunes@ufba.br}
\email{victor.carneiro93@gmail.com}

\date{\today}

\begin{abstract}
It is known that all uniformly expanding dynamics have no phase transition with respect to H\"older continuous potentials. In this paper we show that given a local diffeomorphism $f$ on the circle, that is neither a uniformly expanding dynamics nor invertible, the topological pressure function $\mathbb{R} \ni t \mapsto P_{top}(f , -t\log |Df|)$ is not analytical. In other words,  $f$ has a thermodynamic phase transition with respect to geometric potential. Assuming that $f$ is transitive and that $Df$ is H\"older continuous, we show that there exists $ t_{0} \in (0 , 1]$ such that the transfer operator $\mathcal{L}_{f, -t\log|Df|}$, acting on the space of H\"older continuous functions, has the spectral gap property for all $t < t_{0}$ and has not the spectral gap property for all $t \geq t_{0}$. Similar results are also obtained when the transfer operator acts on the space of bounded variations functions and smooth functions. In particular, we show that in the transitive case $f$ has a unique thermodynamic phase transition and it occurs in $t_{0}$. In addition, if the loss of expansion of the dynamics occurs because of an indifferent fixed point or the dynamics admits an absolutely continuous invariant probability with  positive Lyapunov exponent then $t_0 = 1.$

\end{abstract}

\subjclass[2010]{37D35, 37C30, 37C40, 37E10 }
\keywords{Phase transition, Thermodynamical formalism, Transfer operator.}

\maketitle


\section{Introduction}

This paper is concerned with the phase transition problem of smooth dynamical systems. From a statistical mechanics point of view, phase transition means a dramatic change on a given medium after the action of certain external conditions. In the pioneering works of Sinai, Ruelle and Bowen \cite{Rue68, S72, Bow75} the thermodynamical formalism was brought from statistical mechanics to dynamical systems. From a dynamical systems point of view, phase transitions are often associated to the lack  of differentiability or analyticity of the topological pressure, as a function of the potential.

 More precisely, given $f : \Lambda \rightarrow \Lambda$  a continuous dynamical system on a compact metric space $\Lambda$  and $\phi : \Lambda \rightarrow \R$ a continuous potential, the variational principle for the pressure asserts that 
$$
P_{top}(f, \phi) = \sup\{h_{\mu}(f) + \int\phi d\mu : \mu \text{ is an } f-\text{invariant probability}\}
$$
where $P_{top}(f, \phi)$ denotes the topological pressure of $f$ with respect to $\phi$ and $h_{\mu}(f)$ denotes the Kolmogorov-Sinai's metric entropy.  An equilibrium state $\mu_{\phi}$ for $f$ with respect to $\phi$ is a probability measure that attains
the supremum. In the special case that $\phi \equiv 0$, the topological pressure $P_{top}(f, \phi)$ coincides with  the topological entropy  $h_{top}(f)$, which is one of the most important topological invariants in dynamical systems (see e.g. \cite{W82}). 
Thereby, we say that $f$ has a phase transition with respect to $\phi$ if the topological pressure function
$$
\R \ni t \mapsto P_{top}(f, t\phi)
$$
is not analytic.
It is known from Classical Ergodic Theory that if $f$ has finite topological entropy then the pressure function $\R \ni t \mapsto P_{top}(f, t\phi)$ is convex and Lipschtiz continuous. Then, it follows from Rademacher's theorem that $\R \ni t \mapsto P_{top}(f, t\phi)$ is differentiable except in at most a countable number of points.
By \cite{W92}, if $f$ is expansive and has finite entropy then the lack of differentiability  of the topological pressure function is related to non uniqueness of the equilibrium states associated to potential $\phi$ (for generalizations of this result see e.g. \cite{BCMV20, IT20}). 

The notion of uniform hyperbolicity, that was introduced in the seventies by Smale \cite{Sm67}, drew the attention of many researchers and influenced the way of studying differentiable dynamical systems. In the context of non-invertible maps, hyperbolic dynamics correspond to hyperbolic endomorphism (see \cite{P76}) and uniformly expanding maps.
It follows from \cite{Rue68, S72, Bow75} that if $f$ is a transitive hyperbolic or expanding dynamics and $\phi$ is H\"older continuous then $f$ doesn't have phase transition with respect to $\phi$. On the other hand, several non-uniformly hyperbolic examples are know to present phase transitions with respect to regular potentials. For example:
\begin{itemize}
\item Manneville-Pomeau maps and geometric potential \cite{Lo93}, 
\item large class of maps of the interval with indifferent fixed point and geometric potential \cite{PS92},
\item certain quadratic maps and geometric potential \cite{CRL13}, 
\item certain non-degenerate smooth interval maps and geometric potential \cite{CRL21}, 
\item porcupine horseshoes and geometric potential \cite{DGR14},
\item geodesic flow on Riemannian non-compact manifolds with variable pinched negative sectional curvature and suitable H\"older continuous potential \cite{IRV18}, 
\item geodesic flow on certain M-puncture sphere and geometric potential \cite{V17}. 
\end{itemize}
However, a characterization of the set of dynamics which have phase transitions seems still out of reach.

It is important to emphasize that when the dynamics is restricted, for example a shift or suspension flows  defined over countable alphabets, it is necessary that in the phase transition problem the potentials have low regularity (see \cite{S01,S06,CL15,L15,IJ13}).

In view of this discussion, we propose the following problem:

\begin{mainproblem}\label{probA}
Given $f$ a local diffeomorphism on a compact manifold, with positive topological entropy, and $\phi$ a H\"older continuous potential, can we characterize when $f$ has a thermodynamical phase transition with respect to $\phi$ ?
\end{mainproblem}

The entropy requirement above is to assume some chaoticity. Indeed, if $f$ is the irrational rotation on the circle then $f$ is not hyperbolic or expanding, $h_{top}(f) = 0$ however $\R \ni t \mapsto P_{top}(f , t\phi) = t \int \phi d m$, where $m$ denotes the Lebesgue measure in the circle. In particular $f$ has not phase transition for any continuous potential.

When $f$ is a mixing expanding or hyperbolic dynamics and $\phi$ is a suitable potential  the thermodynamical properties can be recovered through the Ruelle-Perron-Frobenius operator or transfer operator $\mathcal{L}_{\phi}$ acting on functions $g : M \rightarrow \C$ defined as the following:
 $$
 \mathcal{L}_{f,\phi}(g)(x) := \sum_{f(y) = x}e^{\phi(y)}g(y).
 $$
 Indeed, using the fact that $\mathcal{L}_{\phi}$ has the spectral gap property (see precise definition in Section \ref{Statement of the main results}) acting on a suitable Banach space\footnote{If $M$ is one dimensional manifold then usually the suitable Banach space is for example the bounded variations space and in the general case the H\"older continuous spaces, smooth function spaces, distributions spaces, among others.} it is possible to show that $f$ has not phase transitions with respect to suitable potentials (see by e.g. \cite{PU10}).

It is important to note that the spectral gap property is also useful to the study of finer statistical properties of thermodynamical quantities as equilibrium states, mixing properties, large deviation and limit theorems, stability of the topological pressure and equilibrium states or differentiability results for thermodynamical quantities (see e.g. \cite{Ba00,GL06, BCV16,BC19}).

Furthermore, in \cite{PS92} it shown that for a large class of maps $f$ which are piecewise monotone on interval with an indifferent fixed point and the geometric potential $\phi := -\log |Df|$ the associated transfer operator  has not the spectral gap property when acting on the space of bounded variation functions.
 
 In view of this discussion, we propose a conjecture:
 
 \begin{mainconjecture}\label{conjA}
Let $f : M\rightarrow M$ be a $C^{2}-$local diffeomorphism on a Riemannian manifold $M$. If $f$ is not a uniformly expanding dynamic then there exists a H\"older continuous potential $\phi$ such that $\mathcal{L}_{f, \phi}$ does not have the spectral gap property acting on a H\"older continuous function space.
\end{mainconjecture}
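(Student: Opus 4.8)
The plan is to reduce the spectral statement to a thermodynamic one. The natural candidate potential is the geometric potential $\phi_t = -t\log|Df|$ (more generally $-t\log\|Df\|$ or $-t\log|\det Df|$ on manifolds), and the key link is that the spectral gap property is stable under analytic perturbation: if $\mathcal{L}_{f,\phi_t}$ has a spectral gap on a suitable Banach space for every $t$ in an interval $I$, then its leading eigenvalue $e^{P_{top}(f,\phi_t)}$ is a simple isolated eigenvalue depending analytically on $t$ (the family $t \mapsto \mathcal{L}_{f,-t\log|Df|}$ is analytic in $t$, and analytic perturbation theory (\`a la Kato) applies to an isolated simple eigenvalue). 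Hence $t \mapsto P_{top}(f,-t\log|Df|)$ would be real-analytic on $I$. Therefore it suffices to exhibit, for the geometric potential, a loss of analyticity of the pressure, i.e. a genuine thermodynamic phase transition; the conjecture then follows because the gap cannot persist on a neighborhood of the transition point.

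Second, I would analyze the pressure through the variational principle
$$
P_{top}(f,-t\log|Df|) = \sup_\mu\Big(h_\mu(f) - t\int \log|Df|\,d\mu\Big).
$$
Because $f$ is not uniformly expanding, a Ma\~n\'e/Oseledets-type argument should produce an invariant ergodic measure whose smallest Lyapunov exponent vanishes (or the infimum of the Lyapunov exponents over invariant measures equals zero and is approached by measures of controlled entropy). Writing $P(t)$ for the pressure, one has $P(0)=h_{top}(f)>0$ by the entropy hypothesis, $P$ is convex and nonincreasing, and the degeneration of expansion forces $P(t)$ to be bounded below by the zero-exponent contribution. The mechanism I expect is freezing: for $t$ beyond a critical value $t_0$ the supremum above is realized by measures concentrated on the weakly expanding region, so $P'(t)=-\int\log|Df|\,d\mu_t$ jumps (the equilibrium state switches from an expanding measure to a zero-exponent one) and $P$ becomes affine. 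An affine, in particular constant, branch adjacent to a non-affine branch cannot belong to a single real-analytic function, which is exactly the phase transition required in the first step.

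Third, I would make this rigorous in the circle case, which is the tractable core and matches the results announced in the abstract. Since $f$ is neither uniformly expanding nor invertible, its degree satisfies $|\deg f|\ge 2$ and there is a point $x_0$ with $|Df(x_0)|\le 1$; testing the variational principle against measures supported near $x_0$ gives $P(t)\ge 0$ for all $t$, while the weight $|Df|^{-t}$ concentrates mass near $x_0$ as $t\to\infty$, forcing $P(t)\to 0$. Combined with $P(0)=\log|\deg f|>0$ and convexity, this yields a finite $t_0$ with $P(t)=0$ for all $t\ge t_0$ and $P(t)>0$ for $t<t_0$. A real-analytic function constant on $[t_0,\infty)$ would be globally constant, contradicting $P(0)>0$; hence $P$ is not analytic at $t_0$, and by the first step the spectral gap must fail there. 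A complementary, more hands-on route is to estimate the essential spectral radius directly via a Lasota--Yorke / Keller--Liverani inequality: near $x_0$ the factor $|Df|^{-t}$ no longer contracts, so for large $t$ the essential spectral radius rises to meet the spectral radius $e^{P(t)}$, destroying the gap without passing through analyticity.

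Finally, the main obstacle is the passage from the circle to a general Riemannian manifold and to hyperbolic endomorphisms. The clean indifferent-point/freezing picture relies on the degeneration being one-dimensional and concentrated, whereas in higher dimension the derivative cocycle can carry several Lyapunov exponents of mixed sign, the relevant potential ($-t\log\|Df\|$ versus $-t\log|\det Df|$ versus a subbundle determinant) must be chosen to detect the weakest expanding direction, and controlling the equilibrium states $\mu_t$ as $t\to\infty$---hence ruling out an analytic continuation of the pressure---is considerably more delicate. Matching the lower bounds on the essential spectral radius to this degeneration, on the correct Banach space (H\"older, smooth, or bounded variation), is the technical crux on which a full proof of the conjecture hinges.
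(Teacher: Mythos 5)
The statement you are proving is labeled as a \emph{Conjecture} in the paper: the authors do not prove it in this generality, and they explicitly settle only the circle case (Theorems~A and~B and Corollary~C). For that case your outline does track their argument: the link ``spectral gap on an interval of parameters $\Rightarrow$ analyticity of $t\mapsto P_{top}(f,-t\log|Df|)$'' via Kato-type perturbation of the simple isolated leading eigenvalue, combined with a thermodynamic phase transition for the geometric potential, is exactly their Proposition~5.1 plus Theorem~A. Two local inaccuracies in your circle sketch: the existence of an invariant measure with non-positive Lyapunov exponent does not follow from ``testing against measures supported near a point $x_0$ with $|Df(x_0)|\le 1$'' (a non-expanding point need not carry, or be approximated by, invariant measures of small exponent); the paper instead invokes the minimum principle of Cao--Luzzatto--Rios, which produces an \emph{ergodic} measure attaining $\min_\mu\chi_\mu\le 0$. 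And your claim that $P(t)\to 0$ and $P\equiv 0$ on $[t_0,\infty)$ is only the case $\min_\mu\chi_\mu=0$; if some ergodic measure has strictly negative exponent, $P$ is eventually the increasing line $-t\min_\mu\chi_\mu\to+\infty$, and the paper argues non-differentiability at the crossing point rather than ``constant branch vs.\ analytic continuation.'' Both cases still yield non-analyticity, so this is repairable.

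The genuine gap is in the general (higher-dimensional) case, and it is not merely that the argument gets ``more delicate'': the reduction you propose is provably insufficient. The paper's own Section~6.3 exhibits a skew product $F=(f,R_\alpha)$ on $\mathbb{T}^2$, with $f$ expanding and $R_\alpha$ an irrational rotation, which is neither expanding nor hyperbolic yet for which $\phi\mapsto P_{top}(F,\phi)$ is analytic on all H\"older potentials, so \emph{no} thermodynamic phase transition exists for any choice of potential. For such maps your first step (``it suffices to exhibit a loss of analyticity of the pressure'') has no target: the conjecture, if true there, must be attacked by a direct spectral argument (e.g.\ the neutral rotation direction contributing continuous spectrum touching the spectral radius), not through non-analyticity of the pressure. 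Absence of spectral gap is strictly weaker than the presence of a thermodynamic phase transition, and conflating the two is the point at which the proposal ceases to be a viable route to the full conjecture.
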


 In \cite{GKLM18}, under the hypothesis that \emph{for every} potential $\phi$ the transfer operator $\mathcal{L}_{f, \phi}$ has the spectral gap property acting on some suitable Banach space, it is studied the geometry of the set of potentials and consequences for the understanding of the thermodynamics quantities of $f$. Continuing this work in \cite{LR20}, it is studied the curvature of the equilibrium states  space for  a complete shift.
 
On the other hand, it follows from \cite{Kl20} that for any map of the circle which is expanding outside an arbitrarily flat neutral point, the set of H\"older potentials such that $\mathcal{L}_{f, \phi}$ exhibits the spectral gap property on the H\"older functions is dense in the uniform topology.  A natural question is what happens  for  one parameter families of potentials, in particular families induced by the geometric potential, which allow us to build absolutely continuous invariant probabilities with respect to Lebesgue measure (a.c.i.p.) and physical measures:

\begin{mainquestion}
If $f : \Sc^{1} \rightarrow \Sc^{1}$ is a  $C^{1}-$local diffeomorphism in the circle, with  positive topological entropy and $Df$ is H\"older continuous, then the set
 $$\big\{t \in [0 , 1] \;;\; \mathcal{L}_{f, -t\log |Df|} \text{ has the spectral gap property}
 $$
 $$
 \text{on a H\"older continuous function space}\big\}$$ is a dense subset of $[0 , 1]$?
 \end{mainquestion}

Remember that for a large class of dynamics of the interval with indifferent fixed point, in \cite{PS92}, is proved that except for $t\geq1$ we have that $\mathcal{L}_{f, -t\log |Df|}$ has the spectral gap property on the space of functions with bounded variations.

In \cite{CS09}, spectral gap property is investigate for countable shifts, and the following
problem is proposed:\\

\emph{When does a potential $\phi$ satisfy the SGP? How common is this phenomenon? What are the most important obstructions?}

\

It is in fact proved that SGP holds densely and a description is given to when it holds.

As a first step in studying these issues for smooth maps, in this paper we obtained answers to the conjecture and question for dynamics in the circle.

This paper is organized as follows. In Section~\ref{Statement of the main results} we provide some definitions
and the statement of  the main results on thermodynamical and spectral phase transition. 
In Section \ref{stratpro} are summarized the key ideas of the proofs of each main result.
In Section~\ref{prelim} we recall the necessary framework on Topological Dynamics, Thermodynamical formalism and Transfer operator. In Section~\ref{Proofs} the main results are proved. Finally, in the Section~\ref{disque}  some explicitly examples are given, discussions on the main results are realized and additional questions are proposed. In particular; in  Section \ref{t0} is calculated the phase transition parameter, in Section \ref{nontra} some comments about the non-transitive case are made and Section \ref{hdc} it is discussed briefly on the higher dimensional context.
%
%

\section{Statement of the main results}\label{Statement of the main results}

This section is devoted to the statement of the main results.

Throughout the paper we shall denote the circle $\{z \in \C : |z| = 1\}$ by $\Sc^1$.

Our first result ensures thermodynamical phase transition. In fact, the phase transition occurs  with respect to geometric potential.

\begin{maintheorem}\label{mainthA}
Let $f:\Sc^1 \to \Sc^1$  be a non invertible $C^1$ local diffeomorphism. If $f$ is not an expanding dynamics  then $f$ has phase transition with respect to $-\log|Df|$.
\end{maintheorem}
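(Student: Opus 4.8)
The plan is to show that the pressure function $P(t) := P_{top}(f,-t\log|Df|)$ is affine on the half-line $[1,\infty)$ while being strictly positive at $t=0$, and then to conclude non-analyticity by analytic continuation. Write $\lambda(\mu) := \int \log|Df|\, d\mu$ for the Lyapunov exponent of an $f$-invariant probability $\mu$, and set $\lambda_- := \inf_\mu \lambda(\mu)$, the infimum over all such $\mu$. Since $\log|Df|$ is continuous (as $f$ is a $C^1$ local diffeomorphism, $Df$ never vanishes) and the set of invariant measures is weak$^*$-compact, $\lambda_-$ is attained by some $\mu_0$. The first key input is that, because $f$ is not expanding, one has $\lambda_- \le 0$: this is the contrapositive of the fact that a $C^1$ map all of whose invariant measures have strictly positive Lyapunov exponent is uniformly expanding (Cao's criterion), which I would recall in the preliminaries.

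Next I would combine the variational principle $P(t) = \sup_\mu\bigl(h_\mu(f) - t\lambda(\mu)\bigr)$ with Ruelle's inequality $h_\mu(f) \le \lambda^+(\mu) := \max\{0,\lambda(\mu)\}$, valid for $C^1$ maps; by the ergodic decomposition it suffices to argue for ergodic $\mu$. For $t \ge 1$ this yields, for every ergodic $\mu$, the bound $h_\mu - t\lambda(\mu) \le -t\lambda_-$: if $\lambda(\mu) \ge 0$ then $h_\mu - t\lambda(\mu) \le (1-t)\lambda(\mu) \le 0 \le -t\lambda_-$, while if $\lambda(\mu) < 0$ then Ruelle's inequality forces $h_\mu = 0$, so $h_\mu - t\lambda(\mu) = -t\lambda(\mu) \le -t\lambda_-$. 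Hence $P(t) \le -t\lambda_-$ for all $t \ge 1$. For the matching lower bound, note that the minimizer $\mu_0$ has $\lambda(\mu_0) = \lambda_- \le 0$ and therefore $h_{\mu_0}=0$, again by Ruelle, so $P(t) \ge h_{\mu_0} - t\lambda_- = -t\lambda_-$ for every $t$. Combining the two bounds gives $P(t) = -\lambda_- t$ for all $t \ge 1$; in particular $P$ coincides on $[1,\infty)$ with the affine (entire) function $t \mapsto -\lambda_- t$.

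Finally I would derive the contradiction. At $t=0$ we have $P(0) = h_{top}(f)$, and since $f$ is a non-invertible local diffeomorphism of $\Sc^1$ its degree satisfies $|\deg f| \ge 2$, whence $h_{top}(f) \ge \log|\deg f| \ge \log 2 > 0$ by the Misiurewicz--Przytycki inequality. If $P$ were real-analytic on all of $\R$, then by the identity theorem it would agree with its affine continuation $-\lambda_- t$ on the whole line, forcing $P(0) = 0$ and contradicting $P(0) \ge \log 2 > 0$. Therefore $P$ is not analytic, i.e. $f$ has a phase transition with respect to $-\log|Df|$.

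I expect the main obstacle to be establishing the upper bound $P(t) \le -t\lambda_-$ for $t \ge 1$ cleanly, namely the interplay between Ruelle's inequality and the splitting of invariant measures according to the sign of their exponent; the observation that measures with negative exponent are forced to have zero entropy is precisely what makes the affine tail \emph{exact} rather than merely asymptotic, and hence what makes the identity-theorem step legitimate. The remaining ingredients (attainment of $\lambda_-$ by compactness, the non-expansion criterion giving $\lambda_-\le 0$, positivity of the entropy via the degree, and the analytic-continuation conclusion) are comparatively routine.
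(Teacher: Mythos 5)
Your proof is correct, and it rests on the same key inputs as the paper's (the variational principle restricted to ergodic measures, Ruelle's inequality forcing $h_\mu=0$ when $\chi_\mu(f)\le 0$, the Cao--Luzzatto--Rios criterion giving $\lambda_-\le 0$ for non-expanding maps, and Misiurewicz--Przytycki for $P(0)=h_{\topp}(f)\ge\log 2$), but the way you close the argument is genuinely different. The paper splits $\mathcal{M}_e(f)$ into measures with non-negative and non-positive exponents, studies the two restricted pressure functions $P_+$ (convex, non-increasing, with a zero in $(0,1]$) and $P_-$ (affine, equal to $t\mapsto -t\lambda_-$), locates the transition parameter $t_0\in(0,1]$ where $P$ switches from $P_+$ to $P_-$, and then argues by cases: if $\lambda_-=0$ all right one-sided derivatives at $t_0$ vanish while $P\not\equiv 0$ before $t_0$; if $\lambda_-<0$ the one-sided derivatives at the crossing disagree, so $P$ is not even differentiable there. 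You instead prove the exact affine identity $P(t)=-\lambda_- t$ on all of $[1,\infty)$ and invoke the identity theorem against $P(0)>0$; this avoids the case distinction and the need to locate the transition point at all. What you lose is precisely that extra information: the paper's analysis pins $t_0$ down in $(0,1]$ and yields the stronger non-differentiability statement when $\lambda_-<0$, both of which are reused later (in Theorem B and Corollary C). Two small points to tighten: in the lower bound you apply Ruelle's inequality in the form $h_{\mu_0}\le\max\{0,\chi_{\mu_0}(f)\}$ to the compactness minimizer $\mu_0$, which need not be ergodic, and for non-ergodic measures the inequality only bounds $h_\mu$ by $\int\max\{0,\lambda(x)\}\,d\mu$; either pass to an ergodic component of $\mu_0$ attaining $\lambda_-$ (which exists since a.e.\ ergodic component of a minimizer is again a minimizer --- this is what the paper extracts from \cite{CLR03}), or simply note that the lower bound $P(t)\ge h_{\mu_0}-t\lambda_-\ge -t\lambda_-$ already follows from $h_{\mu_0}\ge 0$ without Ruelle.
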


In fact;  if $f$ has any negative Lyapunov exponent, we show that the pressure function is not even differentiable at one point.

\begin{remark}\label{remar21}
Since $f:\Sc^1 \to \Sc^1$ is a local diffeomorphism, it follows from Classical Ergodic Theory and \cite{MP77} that $h_{top}(f) = \log \deg(f)$, where $deg(f)$ is the topological degree of $f$. In particular, $f$ is not invertible if and only if $h_{top}(f) > 0$.
Note also that if $f$ has zero topological entropy it's easy to see that given a continuous potential $\phi$ the topological pressure function $\R \ni t \mapsto P_{top}(f , t\phi)$ is linear in each of the intervals $(-\infty,0]$ and $[0,\infty)$.
\end{remark}

The previous theorem was expected and its proof is not difficult, however it's the starting point for the finer understanding obtained in the next results.

Given $E$ a complex Banach space and $T : E \rightarrow E$ a bounded linear operator, we say that $T$ has the \emph{(strong) spectral gap property} if there exists a decomposition of its spectrum $sp(T) \subset \C$ as follows: 
$
sp(T) = \{\lambda_{1}\} \cup \Sigma_{1}$ where $\lambda_{1} >0$ is a leading eigenvalue for $T$ with one-dimensional associated eigenspace and there exists $0 < \lambda_{0} < \lambda_{1}$ such that $\Sigma_{1} \subset \{z \in \C : |z| < \lambda_{0}\}$.

Given $r \geq 1$ an integer and $\alpha \in (0 , 1]$ we denote by $C^{r}(\Sc^1 , \C)$ and $C^{\alpha}(\Sc^1 , \C)$ the Banach spaces of $C^{r}$ functions  and $\alpha-$H\"older continuous complex functions whose domain is $\Sc^1$, respectively. Furthermore, we denote by $BV[\Sc^1]$ the Banach space of bounded variations complex functions on $\Sc^1$.

Our second result ensures, assuming that the dynamics is transitive, an effective spectral phase transition.

\begin{maintheorem}\label{mainthC}	
Let $E = C^{\alpha}(\Sc^{1} , \C)$ or $C^{r}(\Sc^{1} , \C)$  and let $f:\Sc^1 \to \Sc^1$ be a transitive non invertible $C^{1}-$local diffeomorphism with $Df \in E$. If $f$ is not an expanding  dynamics then there exists $t_{0} \in (0 , 1]$ such that:

(i) the transfer operator $\mathcal{L}_{f, -t\log|Df|}$ has the spectral gap property on $E$ for all $t < t_{0}$ and has not the spectral gap property for all $t \geq t_{0}$.

(ii) If $E = BV[\Sc^{1}]$  then $\mathcal{L}_{f, -t\log|Df|}$ has the spectral gap property on $BV[\Sc^{1}]$ for all $t \in [0 ,  t_{0})$ and has not the spectral gap property for all $t \geq t_{0}$.
\end{maintheorem}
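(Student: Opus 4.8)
The plan is to reduce the whole statement to the qualitative shape of the pressure function $P(t):=P_{top}(f,-t\log|Df|)$ together with the Lasota--Yorke/quasi-compactness machinery, using the non-expansion hypothesis only to pin down a single critical parameter. First I would record the shape of $P$. As a supremum of affine functions of $t$ it is convex, and $P(0)=h_{top}(f)=\log\deg(f)>0$. The hypothesis that $f$ is not expanding supplies (by a Ma\~n\'e/Cao-type criterion) an invariant measure with non-positive Lyapunov exponent, while the assumptions that $f$ is transitive and $Df$ is H\"older place us in the $C^{1+\alpha}$ world, where any ergodic measure of negative exponent is carried by an attracting periodic orbit; transitivity forbids such orbits, so in fact every invariant measure has $\lambda(\mu)\ge 0$ and $\inf_\mu\lambda(\mu)=0$. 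Ruelle's inequality $h_\mu\le\lambda^{+}(\mu)=\lambda(\mu)$ then gives $P(1)=\sup_\mu\{h_\mu-\lambda(\mu)\}=0$, and a zero-exponent measure must have zero entropy. Convexity, monotonicity and $P\ge 0$ now force $P$ to be strictly decreasing up to a point $t_0:=\inf\{t\ge0:P(t)=0\}\in(0,1]$ and identically $0$ afterwards; in particular the leading eigenvalue equals $e^{P(t)}=1$ for all $t\ge t_0$.

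For the spectral gap when $t<t_0$ I would run the standard transfer-operator argument on each of the three spaces, using bounded distortion (available since $Df\in E$). The core estimate is a Lasota--Yorke inequality $\|\mathcal{L}^{n}_{f,-t\log|Df|}g\|\le A_n[g]+B_n\|g\|_\infty$ whose seminorm constant satisfies $A_n^{1/n}\to e^{P(t+\alpha)}$ on $C^\alpha$ (respectively $e^{P(t+r)}$ on $C^r$ and $e^{P(t+1)}$ on $BV$, the extra exponent being the contraction of inverse branches measured by $|Df^n|^{-1}$), where I use the pressure identity $\lim_n(\sup_x\sum_{f^n(y)=x}|Df^n(y)|^{-s})^{1/n}=e^{P(s)}$, valid because $f$ is topologically mixing. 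Since $P$ is strictly decreasing on $[0,t_0)$ and flat afterwards, for every $t<t_0$ one has $P(t+\alpha)<P(t)$ (and likewise for the other spaces), so the essential spectral radius lies strictly below the spectral radius $e^{P(t)}$. Note that the $\alpha$ (or $r$, or $1$) shift washes out precisely because $P$ is flat beyond $t_0$, which is why the threshold $t_0$ is intrinsic rather than space-dependent. Quasi-compactness follows, and transitivity (hence mixing) together with positivity of the operator upgrades it, via Ruelle--Perron--Frobenius, to a simple isolated dominant eigenvalue with the rest of the spectrum strictly inside: the spectral gap. This yields part (i) for $t<t_0$ and part (ii) on $[0,t_0)$, the sharper $BV$ bound $e^{P(t+1)}$ being exactly what lets $t=0$ be included.

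The delicate half is the absence of the gap for $t\ge t_0$, which I would split. At $t=t_0$ I argue by contradiction: the gap is stable under the real-analytic family $t\mapsto\mathcal{L}_{f,-t\log|Df|}$, so it would persist on a neighborhood of $t_0$ and make $P$ real-analytic there; but $P$ is strictly decreasing and positive on the left of $t_0$ and identically $0$ on the right, so analyticity would force $P\equiv0$ everywhere, contradicting $P(0)>0$. This is precisely the phase transition produced by Theorem~\ref{mainthA}. For $t>t_0$, where $P\equiv0$ is analytic and this soft obstruction disappears, I would instead show directly that the essential spectral radius is at least $1=e^{P(t)}$, so it equals the spectral radius and no gap can exist. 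The mechanism is the loss of expansion: from the neutral point (or, in general, from a sequence of measures with Lyapunov exponent tending to $0$) I would build a non-compact family of normalized functions supported near that region on which $\mathcal{L}^{n}_{f,-t\log|Df|}$ has local norm growing at rate $\ge1$, forcing $1$ into the essential spectrum; the same construction reproves the failure at $t=t_0$.

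I expect this last step, the lower bound on the essential spectral radius for $t\ge t_0$, to be the main obstacle, since it cannot be read off from the pressure or from perturbation theory and requires genuinely controlling the operator on the non-hyperbolic part of the dynamics. Concretely one must verify that the quasi-eigenfunctions concentrated near the neutral region are not absorbed by finitely many eigenprojections, which is where a measure-of-non-compactness (Hennion-type) estimate or an explicit intermittency/polynomial-decay-of-correlations argument enters. Carrying this out uniformly for $C^\alpha$, $C^r$ and $BV$, and matching the threshold exactly with the $t_0$ defined through $P$, is the crux of the proof.
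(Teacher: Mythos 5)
Your reduction of the statement to the shape of $P(t):=P_{top}(f,-t\log|Df|)$, your quasi-compactness argument for $t<t_0$ via the essential spectral radius bounds $\rho_{ess}\le e^{P(t+\alpha)}$ (resp.\ $e^{P(t+r)}$, $e^{P(t+1)}$) against $\rho=e^{P(t)}$, and your analyticity contradiction at $t=t_0$ all track the paper's proof closely (the paper reaches the gap on $(-\infty,t_0)$ for $C^\alpha$ and $C^r$ by a slightly more roundabout continuation argument in the parameter, precisely because it only establishes $\rho(\mathcal{L}_{-t\log|Df|}|_E)=e^{P(t)}$ where the gap already holds, via Rokhlin's formula, rather than invoking the identity $\lim_n\|\mathcal{L}^n_{-t\log|Df|}\mathbf{1}\|_\infty^{1/n}=e^{P(t)}$ for every $t$ as you do; if you use that identity you must justify it for non-expanding $f$, e.g.\ by the Baladi--Keller theorem for piecewise monotone maps).

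The genuine gap is your treatment of $t>t_0$. You correctly identify that one must rule out quasi-compactness there, but your proposed mechanism --- constructing quasi-eigenfunctions concentrated near the neutral region to force $\rho_{ess}\ge 1$ --- is not carried out, and you yourself flag it as the unresolved crux. It is also intrinsically delicate: the loss of expansion need not come from an indifferent fixed point, only from a sequence of measures with exponent tending to $0$, and controlling where such measures live is exactly the kind of information the hypotheses do not provide. The paper avoids this entirely with a soft argument you are missing: if $\mathcal{L}_{f,-s\log|Df|}|_E$ had the spectral gap for some $s$, then Nagaev's perturbation method gives $P''(s)=\sigma^2\ge 0$, where $\sigma^2$ is the variance in the Central Limit Theorem for the observable $-\log|Df|+\int\log|Df|\,d\mu_s$; degeneracy $\sigma^2=0$ forces $\log|Df|=c+u\circ f-u$ $\mu_s$-a.e., hence everywhere since the equilibrium state $\mu_s$ has full support, so all Lyapunov exponents equal $c=h_{top}(f)$-positive and $f$ would be expanding by the Cao--Luzzatto--Rios minimum principle, a contradiction. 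Thus a gap at $s$ forces $P$ to be \emph{strictly} convex near $s$, which is incompatible with $P\equiv 0$ on $[t_0,+\infty)$. This single observation disposes of all $t\ge t_0$ at once and is the step you would need to supply (or replace with a completed version of your essential-spectrum construction) for the proof to close.
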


Note that in the context of Theorem \ref{mainthC}, $f$ is piecewise monotone with \textit{full branches} and transitive. Hence, it follows directly from \cite[Corollary 4.3]{CM86} that $f$ is conjugate to an expanding dynamic.

\begin{remark}
The previous theorem is not direct consequence of \cite{PS92} because they assume that the loss of expansion is only caused by an indifferent fixed point similar to a Maneville-Poumeau map and $E = BV[\Sc^{1}]$. In this sense, the previous theorem can be seen as an extension of \cite{PS92}.
\end{remark}

As a consequence of the previous theorem we prove an effective thermodynamical phase transition. 

\begin{maincorollary}\label{mainthD}	
Let $f:\Sc^1 \to \Sc^1$ be a transitive non invertible $C^{1}-$local diffeomorphism  with $Df$ H\"older continuous. If $f$ is not an expanding  dynamics then the topological pressure function $\R \ni t \mapsto P_{top}(f , -t\log |Df|) $ is analytical, strictly decreasing and strictly convex in $(-\infty , t_{0})$ and constant equal to zero in $[t_{0} , +\infty)$ (see figure \ref{graphic P}).
\end{maincorollary}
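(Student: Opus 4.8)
The plan is to treat the two regimes separately, using Theorem~\ref{mainthC} on $(-\infty,t_0)$ and a direct variational computation on $[t_0,+\infty)$. Throughout I would write $P(t):=P_{top}(f,-t\log|Df|)$ and record at the outset that, since $t\mapsto -t\log|Df|$ is affine, the variational principle exhibits $P$ as a supremum of affine functions of $t$, so $P$ is convex on all of $\R$; it is moreover finite and Lipschitz because $h_{top}(f)=\log\deg(f)<\infty$.

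On $(-\infty,t_0)$ I would first upgrade Theorem~\ref{mainthC} from spectral gap to analyticity. As $Df\in C^{\alpha}$, the family $t\mapsto\cL_{f,-t\log|Df|}$ is a real-analytic family of bounded operators on $C^{\alpha}(\Sc^1,\C)$, and for $t<t_0$ each operator enjoys the spectral gap property, so its leading eigenvalue $\lambda(t)$ is simple and isolated. Analytic perturbation theory then makes $t\mapsto\lambda(t)$ real-analytic, while the Ruelle--Perron--Frobenius theorem (available precisely under the spectral gap) gives $P(t)=\log\lambda(t)$; hence $P$ is real-analytic on $(-\infty,t_0)$. Convexity is already in hand; for strict convexity I would use that $P''(t)$ equals the asymptotic variance $\sigma^2$ of $\log|Df|$ with respect to the equilibrium state $\mu_t$, which vanishes exactly when $\log|Df|$ is cohomologous to a constant. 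Since $f$ is transitive but not expanding, it carries periodic orbits with distinct Lyapunov multipliers (a positive-exponent one forced by $h_{top}(f)=\log\deg(f)>0$, and one of non-positive exponent arising from the loss of expansion), so by Liv\v{s}ic's theorem $\log|Df|$ is not cohomologous to a constant, giving $\sigma^2>0$ and strict convexity on $(-\infty,t_0)$.

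For the regime $[t_0,+\infty)$ and for the monotonicity I would pin the sign of $P$. Because $f$ is not uniformly expanding, the characterization of uniform expansion in terms of invariant measures yields an invariant probability $\nu$ with $\int\log|Df|\,d\nu\le 0$, whence $P(t)\ge h_\nu(f)-t\int\log|Df|\,d\nu\ge 0$ for every $t\ge 0$. Conversely, transitivity forbids attracting periodic orbits (their basins are open, forward invariant, and preclude dense orbits), so in dimension one every ergodic measure has non-negative Lyapunov exponent; combined with the Margulis--Ruelle inequality $h_\mu(f)\le\max\{0,\int\log|Df|\,d\mu\}$ this forces $P(t)\le 0$ for $t\ge 1$, hence $P\equiv 0$ there. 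To obtain the exact value $0$ on all of $[t_0,+\infty)$ I would invoke the identification of $t_0$ supplied by the proof of Theorem~\ref{mainthC}: $t_0$ is the parameter at which the essential spectral radius of $\cL_{f,-t\log|Df|}$ overtakes its spectral radius $e^{P(t)}$, and together with the lower bound $P\ge 0$ and a zero-exponent measure this locks the leading eigenvalue at $1$, i.e. $P(t_0)=0$ and $P\equiv 0$ on $[t_0,+\infty)$. Finally, with $P(t_0)=0$, $P\ge 0$ on $[0,\infty)$, $P(t)\to+\infty$ as $t\to-\infty$, and $P$ strictly convex on $(-\infty,t_0)$, the point $t_0$ is the unique global minimiser of $P$ restricted to $(-\infty,t_0]$, so $P$ is strictly decreasing there.

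The main obstacle is precisely this last value computation. Ruelle's inequality only delivers $P(t)\le 0$ for $t\ge 1$, whereas $t_0$ may be strictly smaller than $1$, and pinning $P\equiv 0$ on the possibly nontrivial window $[t_0,1)$ cannot be achieved by soft convexity arguments alone. It requires the fine analysis underlying Theorem~\ref{mainthC}—the control of the essential spectral radius and of the entropy spectrum of Lyapunov exponents near the boundary of expansion—to certify that the leading eigenvalue is already equal to $1$ as soon as $t\ge t_0$.
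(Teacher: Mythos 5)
Your treatment of $(-\infty,t_0)$ --- spectral gap from Theorem~\ref{mainthC}, analytic perturbation of the simple isolated leading eigenvalue, and the identification $P(t)=\log\rho(\mathcal{L}_{f,-t\log|Df|})$ --- is exactly the paper's route (Lemma~\ref{Lemapress}, Corollary~\ref{analy}, Proposition~\ref{propat}), and your derivation of strict monotonicity from strict convexity together with $P\ge 0=P(t_0)$ is a legitimate alternative to the paper's direct comparison argument (Lemma~\ref{decre}). But your non-degeneracy argument for the variance has a genuine gap. You claim that non-expansion produces a \emph{periodic} orbit of non-positive exponent and then invoke Liv\v{s}ic's theorem. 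The direction of Liv\v{s}ic you actually need (coboundary $\Rightarrow$ all periodic orbits share one exponent) is trivial and is not where the work lies; more seriously, the minimizing measure of \cite{CLR03}, which has exponent $\le 0$, need not be supported on a periodic orbit, so the existence of a periodic orbit with non-positive exponent does not follow from the hypotheses. The paper's argument avoids periodic orbits entirely: if $\log|Df|=c+u\circ f-u$ with $u$ bounded, the Birkhoff averages of $\log|Df|$ converge uniformly to $c$, so \emph{every} invariant measure has exponent $c$; positive entropy forces $c>0$ by Ruelle's inequality, and then \cite{CLR03} makes $f$ expanding, a contradiction. You should substitute this argument.

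The step you flag as the ``main obstacle'' --- pinning $P\equiv 0$ on the window $[t_0,1)$ --- is not an obstacle once you use the paper's definition of $t_0$. In the transitive case every ergodic measure has non-negative exponent by \cite{A20}, so the minimizing measure of \cite{CLR03} has exponent exactly $0$ and entropy $0$, whence $P\ge 0$ on all of $\R$; $P$ is continuous and non-increasing with $P(0)=h_{\topp}(f)>0$ and $P(1)\le 0$, so it has a first zero $t_0\in(0,1]$ and, being non-negative and non-increasing, vanishes identically on $[t_0,+\infty)$. The content of Theorem~\ref{mainthC} is that this first zero coincides with the spectral transition parameter, not the reverse: no essential-spectral-radius analysis is needed to evaluate $P$ after $t_0$. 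With these two repairs your proposal closes.
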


\begin{figure}[htb]
\includegraphics[width=8.1cm, height=4.5cm]{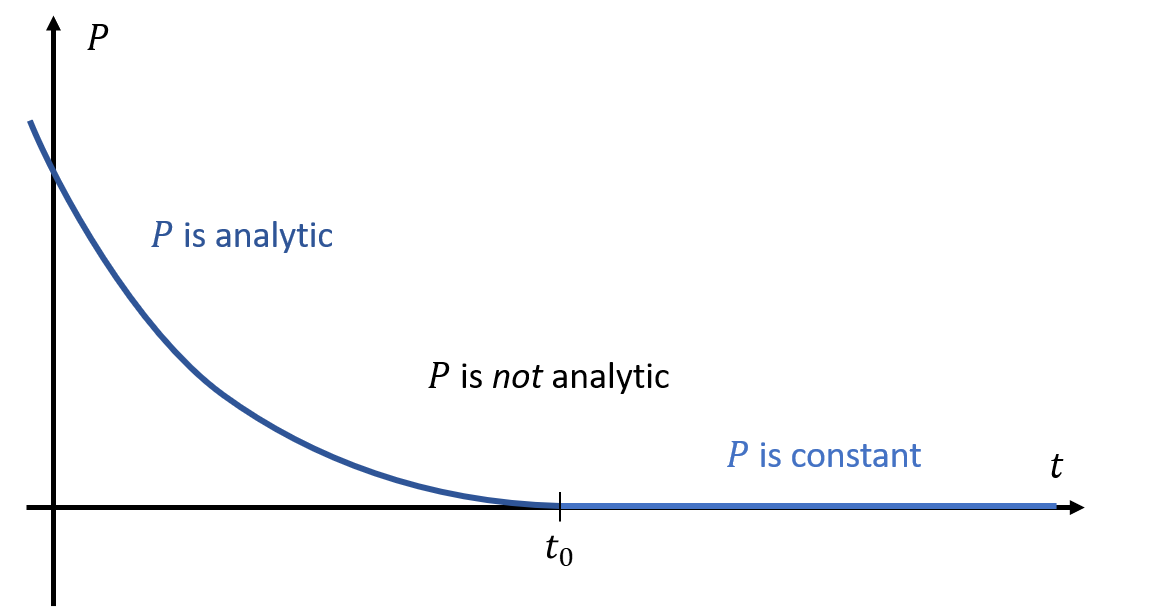}
\caption{Graphic of the topological pressure function}
\label{graphic P}
\centering
\end{figure}

We show that the $t_{0}$ in the previous theorem and corollary are the same. Moreover, $t_{0} = \inf\{ t \in \R : P_{top}(f , -t\log |Df|) = 0\}$ and it can be calculated using the Haursdorff dimension of the support of invariant probabilities with  positive Lyapunov exponents. In particular, if the loss of expansion is only caused by an indifferent fixed point or the dynamics admits an a.c.i.p. with  positive Lyapunov exponent then $t_{0} = 1$. (see details in Section \ref{t0}) 

Taking into consideration a nonwandering decomposition theorem, we see that the non transitive case can be reduced to a transitive context, however it is not possible to apply directly the Theorem \ref{mainthC}	(see details in Section \ref{nontra}).

\section{Strategy of the proofs}\label{stratpro}

In this section we summarize the key ideas behind each proof of the main results.

\subsection{Theorem \ref{mainthA}} We investigate the pressure function associated to probabilities with non-negative and non-positive Lyapunov exponents, which we denote $P_+$ and $P_-$, respectively. The pressure function is always the maximum of these two, with $P_+$ necessarily dominating before of $t=0$ and then $P_-$ after $t=1$. Thus the point of transition, which lies between $0$ and $1$, is exactly where phase transition occurs, since each of these functions must have non-negative and non-positive derivatives, respectively. The characterization of expanding maps via positive Lyapunov exponents \cite{CLR03} implies that an ergodic measure with zero or negative  Lyapunov exponent must exist, thus $P_-$ is well defined and transition occurs.

\subsection{Theorem \ref{mainthC}} The proof will be divided into three steps:

\begin{itemize}
\item In the first step we will show that in the thermodynamical phase transition parameter $t_{0}$, obtained in the Theorem \ref{mainthA}, the transfer operator $\mathcal{L}_{f, -t\log|Df|}$ has not the spectral gap property acting on E. For that step, following the ideas of the proof for expanding maps (such as in e.g. \cite{OV16}), we use Rokhlin's formula and prove that the invariant measure obtained via spectral gap property is an equilibrium state with topological pressure related to the transfer operator. Furthermore spectral theory ensures that the leading isolated simple eigenvalue varies analytically with respect to the transfer operator. 
\item In the second step we will show that, after the parameter $t_{0}$, the transfer operator $\mathcal{L}_{f, -t\log|Df|}$ has not the spectral gap property acting on $E$. For that step, assuming there is spectral gap for some parameter, we make use of Nagaev's method for obtaining the Central Limit Theorem to ensure that in the neighborhood of this parameter the geometric pressure function is strictly convex. In particular, the constant behaviour of the geometric pressure function after the transition parameter implies that spectral gap does not occur for these parameters.
\item In the third step we will show that before the parameter $t_{0}$ the transfer operator $\mathcal{L}_{f, -t\log|Df|}$ has the spectral gap property acting on $E$. For that step we prove that, in our context, quasi-compactness is sufficient for the transfer operator to have spectral gap. For this operator acting on bounded variation functions, we use the estimates on spectral radius from \cite{BK90} and \cite{Ba00} to prove quasi-compactness directly. For H\"older and smooth functions, we use estimates from \cite{BJL96} and \cite{CL97} to prove the essential spectral radius is bounded above by a translation of the pressure function. Using this estimate, we prove spectral gap for $t=0$, and therefore to  $t$ sufficiently close to zero, by openness of spectral gap. Next we extend this property to all $t<t_0$ using the monotonicity of the pressure function and the fact that it is related with the spectral radius, not only when the gap holds but also on the boundary of that region.
\end{itemize}

\subsection{Corollary \ref{mainthD}} The analyticity of the topological pressure function follows directly from Theorem \ref{mainthC} by analyticity of the leading isolated eigenvalue and its relation with the topological pressure.


\section{Preliminary}\label{prelim}

In this section we provide some definitions and preparatory results needed for the proof of the main results. 

\subsection{Topological Dynamics}

Given a continuous map $f:X\to X$ on a compact metric space $X$, we start defining some properties from a topological point of view describing the mixing behaviour of the system, that is, how much the orbits of points, or even open sets, tend to mix with each other and visit all regions of the space $X$. The basic expected type of mixing is called \textit{transitivity} and is defined as:

\begin{definition}
The dynamic $f$ is \textbf{transitive} if for every open sets $U$ and $V$, there exists $n\in\N$ such that $f^n(U)\cap V \neq 0$ or, equivalently, $f$ admits a dense orbit.
\end{definition}

Other concepts of mixing behaviour arise naturally, such as \textit{strong transitivity}, \textit{mixing}, all the way up to \textit{topological exactness}, also called \textit{locally eventually onto}:

\begin{definition}
The dynamic $f$ is \textbf{topologically exact} if for every open set $U$ there exists $n\in\N$ such that
$f^n(U)=X$.
\end{definition}

Evidently, topologically exactness implies transitivity, the converse need not be true. A consequence of topological exactness is that every set of pre-orbits $\bigcup_{n\geq 0}f^{-n}\{x\}$ is uniformly dense on $X$ for every $x\in X$.

Regarding the distances of orbits, two concepts are essential. A map is said to be \textit{expanding} if the distance of two sufficiently close points are expanded by $f$:

\begin{definition}
The dynamic $f$ is called \textbf{expanding} if $f$ is open map and there are constants $\sigma > 1$, $r>0$ and $n \geq 0$ such that 
$$d(f^{n}(x),f^{n}(y))\geq \sigma d(x,y) \text{ for all } d(x,y) < r. $$
\end{definition}

Note that if $X$ is a compact Riemannian manifold and $f$ is a local diffeomorphism then: $f$ is an expanding dynamics if, only if, $\inf_{x \in X}||Df(x)|| > 1$.

A dynamic is said to be \textit{expansive} if the orbits of any two points become distinguishable after enough time:

\begin{definition}
We say that $f$ is (positively) \textbf{expansive} if there is a constant $\varepsilon_0 >0$, called the \textbf{expansivity constant}, such that for every pair of points $x\neq y$ in $X$ there is $n\in \N$ such that d$(f^n(x),f^n(y))\geq \varepsilon_0$.
\end{definition}

Every expanding dynamics is expansive. In fact, every expanding dynamics defined on a connected domain will be expansive and topologically exact.

An example of expanding dynamics are the $d$-adic transformations or \textbf{Bernoulli maps}: for a given $d\in N$ define 
$$
f_d: \Sc^1 \xrightarrow[ x \mapsto dx \mod 1]{} \Sc^1.
$$
Remember that two maps $f: X \to X$ and $g: Y \to Y$ are \textbf{conjugate} if there exists a homeomorphism $h:X\to Y$ such that $g\circ h=h \circ f$.
Expansiveness, topological exactness and transitivity are in fact \textbf{topologically invariant}, that is, if a map is conjugate to another map with either of these properties then it also does. However, a conjugate of an expanding map need not be expanding as well. An example of that are the Manneville-Pomeau maps $f_\alpha:[0,1]\to[0,1]$:

\begin{align*}
   f_\alpha(x)=\begin{cases}
  x(1+2^\alpha x^\alpha), \text{ if } x \in [0,1/2]\\
  2x-1, \text{ if } x \in (1/2,1]
  \end{cases} 
\end{align*}
which are topologically exact, expansive but not expanding maps. Since $f_\alpha(0)=0$ is an \textit{indifferent fixed point}, that is $Df_\alpha(0)=1$.

\

\subsection{Ergodic theory}
Now we state some classical definitions,
 notations and results from Ergodic Theory (for more details see e.g. \cite{OV16}).
 
One fundamental result is the called \textbf{Birkhoff's Ergodic Theorem} which relates time and space averages of a given potential $\phi: X \to \R.$ 

\begin{theorem}(Birkhoff) Let $f:X\to X$ be a measurable transformation and $\mu$ be an $f$-invariant probability. Given any integrable function $\phi: X \to \R$, the limit:
$$\bar{\phi}(x)=\lim_{n\to\infty} \dfrac{1}{n}\sum_{j=0}^{n-1} \phi(f^j(x))$$
exists in $\mu$-a.e. $x \in X$. Furthermore, the function $\Bar{\phi}$ defined this way is integrable and satisfies
$$\int\bar{\phi}(x)d\mu(x)=\int\phi(x)d\mu (x).$$
Additionally, if $\mu$ is $f-$ergodic, then $\Bar{\phi}\equiv\int \phi d\mu$ for $\mu-$a.e..
\end{theorem}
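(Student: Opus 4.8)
The plan is to deduce everything from the \emph{Maximal Ergodic Theorem}, which is the combinatorial heart of the argument; once it is available, the pointwise convergence, the integrability, and the integral identity all follow by soft measure-theoretic manipulations. Throughout I write $S_n\phi = \sum_{j=0}^{n-1}\phi\circ f^j$ for the Birkhoff sums, so that the object of interest is $\frac1n S_n\phi$.

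First I would establish the Maximal Ergodic Theorem following Garsia: setting $M_N = \max\{0, S_1\phi, \ldots, S_N\phi\}$ and $E_N = \{M_N > 0\}$, the elementary recursion $S_n\phi = \phi + (S_{n-1}\phi)\circ f \le \phi + M_N\circ f$, valid for $1\le n\le N$, yields $M_N \le \phi + M_N\circ f$ on $E_N$. Integrating over $E_N$ and using the $f$-invariance of $\mu$ together with $M_N\ge 0$ gives $\int_{E_N}\phi\,d\mu \ge \int M_N\,d\mu - \int M_N\circ f\,d\mu \ge 0$. Letting $N\to\infty$ and applying dominated convergence (the integrands are dominated by $|\phi|$) produces $\int_E\phi\,d\mu\ge 0$ on $E=\{\sup_{n\ge 1}S_n\phi>0\}$. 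This is the one genuinely nontrivial step, and I expect it to be the main obstacle: the inequality $S_n\phi\le\phi+M_N\circ f$ and the bookkeeping of the boundary term $\int M_N\circ f\,d\mu$ must be handled with care, since it is precisely here that the invariance of $\mu$ is exploited.

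Next I would upgrade the maximal inequality to pointwise convergence. Put $\overline\phi^{*} = \limsup_n \frac1n S_n\phi$ and $\overline\phi_{*} = \liminf_n \frac1n S_n\phi$; both are $f$-invariant. For rationals $\alpha>\beta$ the set $B_{\alpha,\beta} = \{\overline\phi_{*}<\beta<\alpha<\overline\phi^{*}\}$ is $f$-invariant, so I may restrict the system to it. On $B_{\alpha,\beta}$ one has $\sup_n \frac1n S_n(\phi-\alpha)>0$ everywhere, whence the maximal theorem gives $\int_{B_{\alpha,\beta}}(\phi-\alpha)\,d\mu\ge 0$; applying the same reasoning to $\beta-\phi$ gives $\int_{B_{\alpha,\beta}}(\beta-\phi)\,d\mu\ge 0$. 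Together these force $\alpha\,\mu(B_{\alpha,\beta})\le\int_{B_{\alpha,\beta}}\phi\,d\mu\le\beta\,\mu(B_{\alpha,\beta})$, so $\mu(B_{\alpha,\beta})=0$. Taking the countable union over rationals $\alpha>\beta$ shows $\overline\phi^{*}=\overline\phi_{*}$ $\mu$-a.e., that is, the limit $\overline\phi$ exists $\mu$-a.e.

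Finally I would settle integrability and the identities. Fatou's lemma applied to $|\frac1n S_n\phi|$, combined with $\int|\frac1n S_n\phi|\,d\mu\le\int|\phi|\,d\mu$ (again by invariance), gives $\overline\phi\in L^1$ with $\int|\overline\phi|\,d\mu\le\int|\phi|\,d\mu$. For the equality $\int\overline\phi\,d\mu=\int\phi\,d\mu$ I would first treat bounded $\phi$, where $\int\frac1n S_n\phi\,d\mu=\int\phi\,d\mu$ and dominated convergence apply directly, and then pass to general $\phi\in L^1$ by truncating $\phi_M=\max(-M,\min(M,\phi))$ and using linearity of the Birkhoff average together with the estimate $|\int\overline{(\phi-\phi_M)}\,d\mu|\le\int|\phi-\phi_M|\,d\mu\to 0$. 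For the ergodic case, $\overline\phi$ is an $f$-invariant integrable function; ergodicity forces it to be constant $\mu$-a.e., and that constant must equal $\int\overline\phi\,d\mu=\int\phi\,d\mu$, giving $\overline\phi\equiv\int\phi\,d\mu$ as claimed.
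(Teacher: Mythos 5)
The paper does not prove this statement: Birkhoff's theorem is quoted in Section 4.2 as classical background, with the reader referred to \cite{OV16} for details, so there is no in-paper argument to compare yours against. Your proposal is the standard and correct proof via the Maximal Ergodic Theorem in Garsia's formulation, and all the steps check out: the recursion $S_n\phi\le\phi+M_N\circ f$ on $\{1\le n\le N\}$ together with $M_N\ge 0$ and invariance of $\mu$ gives $\int_{E_N}\phi\,d\mu\ge 0$; the sets $B_{\alpha,\beta}$ are $f$-invariant (because $\overline\phi^{*}$ and $\overline\phi_{*}$ are), so the maximal inequality may legitimately be applied to the restricted system (the case $\mu(B_{\alpha,\beta})=0$ being vacuous), and the squeeze $\alpha\,\mu(B_{\alpha,\beta})\le\int_{B_{\alpha,\beta}}\phi\,d\mu\le\beta\,\mu(B_{\alpha,\beta})$ with $\alpha>\beta$ kills each $B_{\alpha,\beta}$; Fatou plus $\int|\tfrac1n S_n\phi|\,d\mu\le\int|\phi|\,d\mu$ gives $\overline\phi\in L^1$; and the truncation argument transfers $\int\overline\phi\,d\mu=\int\phi\,d\mu$ from bounded to general integrable $\phi$. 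The ergodic case follows as you say from the $f$-invariance of $\overline\phi$ (which is worth stating explicitly: $\overline\phi\circ f=\overline\phi$ a.e.\ because $\tfrac1n S_n\phi(f(x))=\tfrac{n+1}{n}\cdot\tfrac{1}{n+1}S_{n+1}\phi(x)-\tfrac1n\phi(x)$). This is a complete and self-contained route; the cited reference \cite{OV16} proves the same theorem by a slightly different covering argument on the level sets of $\limsup_n\tfrac1n S_n\phi$, but nothing in the present paper depends on which proof is used.
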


We denote the $f-$invariant probabilities space by $\mathcal{M}_{1}(f)$  and  the $f-$invariant and ergodic probabilities space by $\mathcal{M}_{e}(f)$.

An important example of time average are the \textbf{Lyapunov exponents} which translate the asymptotical rates of expansion and contraction of a map. On a broader context, these are defined via the Oseledets multiplicative ergodic theorem. For our context, smooth local diffeomorphisms on the circle, the Lyapunov exponents are defined simply as:
$$\lambda(x)=\lim_{n\to\infty} \dfrac{1}{n}\log|Df^{n}(x))|=\lim_{n\to\infty}\sum_{j=0}^{n-1}\dfrac{1}{n}\log|Df(f^j (x))|,$$
whether the limit exists.
That is, the Lyapunov exponents $\lambda$ coincide with the time average for the continuous potential $\log|Df|$  in each point $x$ where the limit exists.
On the other hand, given an ergodic measure $\mu$, by Birkhoff's Ergodic Theorem, we have
$$\lambda(x)=\int \log|Df|d\mu \text{ for } \mu-\text{a.e. } $$ and we define the Lyapunov exponent for this measure $\chi_\mu(f):=\int \log|Df|d\mu$. Expanding dynamics are characterized by having strict positive Lyapunov exponents and that will play a significant role later on (see \cite{CLR03}).

Now, we present two estimates for the metric entropy that will be crucial later on the proof of main results. First the relation between entropy and Lyapunov exponents, that in our context of smooth local diffeomorphisms on the circle, guarantees us:

\begin{theorem}(Margulis-Ruelle inequality, \cite{Rue78})
Let $f : \Sc^{1} \rightarrow \Sc^{1}$ be a $C^{1}$-local diffeomorphism that preserves an $f-$invariant and ergodic probability $\mu$. Then $$h_\mu(f)\leq \max\{0 , \chi_{\mu}(f)\}.$$
\end{theorem}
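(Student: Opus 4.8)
The plan is to deduce the sharp bound from a cruder one through Ruelle's device of passing to iterates. Concretely, I would first prove that for \emph{every} $C^{1}$ local diffeomorphism $g:\Sc^1\to\Sc^1$ preserving $\mu$ one has
\[
h_\mu(g)\le \int \log^+|Dg|\,d\mu + K,
\]
where $\log^+=\max\{0,\log\}$ and $K>0$ is a \emph{universal} constant (independent of $g$). Granting this, I apply it to $g=f^N$. Since $h_\mu(f^N)=N\,h_\mu(f)$ and $\log|Df^N|=\sum_{i=0}^{N-1}\log|Df|\circ f^{i}$, dividing by $N$ gives
\[
h_\mu(f)\le \frac1N\int\log^+|Df^N|\,d\mu+\frac{K}{N}
=\int\Big(\tfrac1N\sum_{i=0}^{N-1}\log|Df|\circ f^{i}\Big)^{+}d\mu+\frac{K}{N}.
\]
By Birkhoff's Ergodic Theorem the Birkhoff averages converge to $\chi_\mu(f)$ in $L^1(\mu)$ (here $\log|Df|$ is continuous, hence bounded); since $t\mapsto t^+$ is $1$-Lipschitz the positive parts converge in $L^1$ to the constant $\max\{0,\chi_\mu(f)\}$. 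Letting $N\to\infty$ then yields $h_\mu(f)\le\max\{0,\chi_\mu(f)\}$, as desired. The elegance is that this single step both sharpens $\int\log^+|Df|\,d\mu$ to $\max\{0,\chi_\mu(f)\}$ and absorbs the universal error $K$.

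For the crude inequality I would argue as follows. Fix $\delta$ smaller than the injectivity radius of $g$ and let $\cP$ be a partition of $\Sc^1$ into intervals of length in $[\delta/2,\delta]$, so that $g$ is injective on each atom. Using $h_\mu(g,\cP)=\lim_n H_\mu(\cP\mid\bigvee_{i=1}^n g^{-i}\cP)$ together with monotonicity of conditional entropy under refinement of the conditioning partition, one gets $h_\mu(g,\cP)\le H_\mu(\cP\mid g^{-1}\cP)$. The key algebraic observation is that, by $g$-invariance of $\mu$, $H_\mu(g^{-1}\cP)=H_\mu(\cP)$, whence
\[
H_\mu(\cP\mid g^{-1}\cP)=H_\mu(\cP\vee g^{-1}\cP)-H_\mu(\cP)=H_\mu(g^{-1}\cP\mid\cP).
\]
This lets me replace the backward conditioning by the forward one, which is geometrically transparent: for an atom $P\in\cP$, since $g|_P$ is injective, the atoms of $g^{-1}\cP$ meeting $P$ correspond exactly to the atoms of $\cP$ meeting the interval $g(P)$, whose number is at most $2|g(P)|/\delta+2\le 2\sup_P|Dg|+2$. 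Bounding each conditional entropy by the logarithm of the number of atoms and invoking uniform continuity of $|Dg|$, I obtain $H_\mu(g^{-1}\cP\mid\cP)\le\int\log\big(2|Dg|+2\omega(\delta)+2\big)\,d\mu$, where $\omega$ is the modulus of continuity of $|Dg|$. Since $\log(2a+2)\le\log^+a+3\log2$ for all $a\ge0$, letting $\delta\to0$ gives $h_\mu(g,\cP)\le\int\log^+|Dg|\,d\mu+3\log2$. Finally any finite partition $\cR$ satisfies $h_\mu(g,\cR)\le h_\mu(g,\cR\vee\cP)$ with $\cR\vee\cP$ still of diameter $<\delta$, so taking the supremum over $\cR$ yields the crude inequality with the universal constant $K=3\log2$.

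The main obstacle is the crude inequality, and within it the geometric counting step: one must control how the dynamics redistributes a fine partition in terms of $|Dg|$, and, crucially, ensure that the unavoidable additive error coming from boundary effects (the ``$+2$ atoms'') is a universal constant rather than a quantity growing with the degree or the expansion of $g$. The invariance identity $H_\mu(\cP\mid g^{-1}\cP)=H_\mu(g^{-1}\cP\mid\cP)$ is what makes the estimate tractable in the non-invertible setting, since it converts an awkward backward count (how a single preimage fibre, a union of $\deg(g)$ far-apart intervals, is cut by $\cP$) into the clean forward count above. Once the crude inequality is in place with a universal $K$, the passage to iterates is routine and produces the sharp exponent $\max\{0,\chi_\mu(f)\}$.
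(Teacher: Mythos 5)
The paper does not prove this statement: it is quoted as a classical result with a citation to Ruelle's 1978 paper, so there is no in-paper argument to compare against. Your proposal is essentially the standard proof of the Ruelle inequality, specialized to dimension one where it becomes particularly clean (no Oseledets decomposition is needed, and the sharpening from $\int\log^+|Df|\,d\mu$ to $\max\{0,\chi_\mu(f)\}$ comes directly from Birkhoff applied to the iterates $f^N$). The reduction step is correct: $h_\mu(f^N)=Nh_\mu(f)$, the chain rule, the $1$-Lipschitz property of $t\mapsto t^+$, and $L^1$-convergence of Birkhoff averages of the bounded function $\log|Df|$ do exactly what you claim, and the universal constant $K$ is killed by dividing by $N$. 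The crude inequality is also sound in its main steps: the bound $h_\mu(g,\cP)\le H_\mu(\cP\mid g^{-1}\cP)$, the invariance identity converting it to $H_\mu(g^{-1}\cP\mid\cP)$, and the forward count of at most $2\sup_P|Dg|+2$ atoms of $\cP$ meeting $g(P)$ are all correct (the count even survives the case where $g(P)$ wraps around the circle).

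The one step that does not work as written is the very last sentence of the crude inequality: you pass from $h_\mu(g,\cP)$ to $h_\mu(g)=\sup_\cR h_\mu(g,\cR)$ by claiming $h_\mu(g,\cR)\le h_\mu(g,\cR\vee\cP)$ and that $\cR\vee\cP$ ``still has diameter $<\delta$''. But your entropy bound was derived only for the interval partition $\cP$, using that its atoms are intervals of length \emph{at least} $\delta/2$; for $\cR\vee\cP$ the forward count of atoms meeting $g(A)$ can be as large as $(\#\cR)(2\sup|Dg|+2)$, which adds $\log\#\cR$ to the estimate and destroys the universality of $K$. The repair is standard: either invoke the fact that $h_\mu(g)=\lim_{\delta\to 0}h_\mu(g,\cP_\delta)$ for any sequence of partitions with diameters tending to zero, or use $h_\mu(g,\cR)\le h_\mu(g,\cP_\delta)+H_\mu(\cR\mid\cP_\delta)$ together with $H_\mu(\cR\mid\cP_\delta)\to 0$. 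With that correction the argument is complete.
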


Next, we study the relation between entropy and the \textit{Jacobian}.
Let $f$ be a local homeomorphism and a given probability $\nu$ (not necessarily invariant), define the \textbf{Jacobian} of $f$ with respect to $\nu$ as the measurable function $J_\nu(f)$, which is essentially unique, satisfying:
$$\nu(f(A))=\int_A J_{\nu}(f) d\nu$$
for any measurable invertibility domain $A$. Up to restricting $f$ to a full measure subset, $f$ always admits a Jacobian with respect to $\mu$, an invariant measure.

When we have a partition that generates the Borelian $\sigma$-algebra, we have a more direct way of obtaining the entropy of $f$ with respect to the measure $\mu$ using that partition.
 Given two partitions $\mathcal{P}$ and $\mathcal{Q}$ of $X$, we define a new partition $\mathcal{P}\vee \mathcal{Q}$ as the following:
$$\mathcal{P}\vee\mathcal{Q}:=\{A\cap B; A\in\mathcal{P} \text{ and } B \in \mathcal{Q}\}.$$
 
\begin{definition}
Given $f : X \rightarrow X$ a measurable map, we say that a finite partition $\mathcal{P}$ of $X$ is a \textbf{generating partition} if $\bigvee_{i=m}^{+\infty} f^{-i}(\mathcal{P})$ generates the $\sigma$-algebra of $X$, with $m=-\infty$ or $m=0$ for invertible and non invertible systems, respectively.
\end{definition} 

If additionally $X$ is a metric space, then a partition such that the diameter of the elements of $\bigvee_{i=m}^{+\infty} f^{-i}(\mathcal{P})$ gets arbitrarily small is a generating partition. Note that in our context of local diffeomorphisms on the circle, we will see, that the domains of the full branches form a generating partition. Any such partition gives the following way of calculating the entropy:

\begin{theorem}(Rokhlin's formula)\label{rokf} Let $f: M \to M$ be a locally invertible transformation,  on a separable metric space, and $\mu$ be an $f$-invariant probability. Assume that there is some generating, up to zero measure, partition $\mathcal{P}$ of $X$ such that every $P \in \mathcal{P}$ is an invertibility domain of $f$. Then

$$h_\mu(f)=\int \log J_\mu(f) d\mu.$$

\end{theorem}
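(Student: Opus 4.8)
The plan is to reduce Rokhlin's formula to two independent ingredients: a purely entropy-theoretic identity expressing $h_\mu(f)$ as a single conditional entropy, and a measure-theoretic identity identifying the relevant conditional expectation with the reciprocal of the Jacobian. Throughout I write $\cB$ for the Borel $\sigma$-algebra of $X$, $\mathbb{E}_\mu(\cdot \mid \cdot)$ for conditional expectation, and, for $x \in X$, $P(x)$ for the element of $\cP$ containing $x$.

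First I would establish the entropy identity $h_\mu(f) = H_\mu\bigl(\cP \mid f^{-1}\cB\bigr)$. Since $\cP$ is generating, the Kolmogorov--Sinai theorem (see e.g. \cite{OV16}) gives $h_\mu(f) = h_\mu(f,\cP)$, so it suffices to analyse the partition entropy along $\cP$. Using the identity $H_\mu(\cA \vee \cC) = H_\mu(\cC) + H_\mu(\cA \mid \cC)$ together with the $f$-invariance of $\mu$ (which yields $H_\mu(\bigvee_{i=1}^{n} f^{-i}\cP) = H_\mu(\bigvee_{i=0}^{n-1} f^{-i}\cP)$), I would telescope to obtain
$$H_\mu\Bigl(\bigvee_{i=0}^{n-1} f^{-i}\cP\Bigr) = H_\mu(\cP) + \sum_{k=1}^{n-1} H_\mu\Bigl(\cP \;\Big|\; \bigvee_{i=1}^{k} f^{-i}\cP\Bigr).$$
A Cesàro argument then gives $h_\mu(f,\cP) = \lim_{k\to\infty} H_\mu(\cP \mid \bigvee_{i=1}^{k} f^{-i}\cP)$, and the monotone convergence of conditional entropy along the increasing $\sigma$-algebras identifies this limit with $H_\mu(\cP \mid \bigvee_{i=1}^{\infty} f^{-i}\cP)$. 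Finally, because $\cP$ generates, $\bigvee_{i=1}^{\infty} f^{-i}\cP = f^{-1}\bigl(\bigvee_{i=0}^{\infty} f^{-i}\cP\bigr) = f^{-1}\cB$ modulo $\mu$, giving the claimed identity.

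The second ingredient is the identification, for each $P \in \cP$,
$$\mathbb{E}_\mu\bigl(\mathbf{1}_{P} \mid f^{-1}\cB\bigr)(x) = \frac{1}{J_\mu(f)(x)} \quad \text{for } \mu\text{-a.e. } x \in P.$$
Here I would use that $f|_P$ is a bijection onto $f(P)$ with measurable inverse $h_P$, and that a function is $f^{-1}\cB$-measurable precisely when it factors through $f$. Writing the conditional expectation as $\psi \circ f$ and testing against sets of the form $f^{-1}B$, the defining relation $\mu(P \cap f^{-1}B) = \int_B \psi\, d\mu$ (using $f_*\mu = \mu$) must be matched against the inverse change-of-variables formula $\mu(h_P(C)) = \int_C (J_\mu(f)\circ h_P)^{-1}\,d\mu$, valid for $C \subset f(P)$ and obtained from the definition of the Jacobian on the invertibility domain $P$ (here one uses that $J_\mu(f)>0$ a.e., as holds for a local diffeomorphism). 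Since $P \cap f^{-1}B = h_P(f(P)\cap B)$, this forces $\psi = \mathbf{1}_{f(P)}\cdot (J_\mu(f)\circ h_P)^{-1}$, and evaluating at $x \in P$, where $h_P(f(x)) = x$, yields the displayed identity.

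Combining the two ingredients and integrating the information function $x \mapsto -\log \mathbb{E}_\mu(\mathbf{1}_{P(x)} \mid f^{-1}\cB)(x)$ over $X$ gives
$$h_\mu(f) = H_\mu(\cP \mid f^{-1}\cB) = \sum_{P\in\cP}\int_P -\log \frac{1}{J_\mu(f)}\,d\mu = \int \log J_\mu(f)\,d\mu,$$
as desired. I expect the main obstacle to lie in the second ingredient: making the change of variables along the inverse branches rigorous requires careful bookkeeping of null sets, a reduction of $f$ to a full-measure set on which every $P$ is a genuine invertibility domain, and measurability of the selection $h_P$. The entropy identity, by contrast, is standard once the convergence of conditional entropies along increasing $\sigma$-algebras is invoked.
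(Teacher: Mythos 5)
The paper does not prove this statement; it is quoted as a classical result (Rokhlin's formula, as in the cited reference of Oliveira--Viana, Section 9.7), so there is no internal proof to compare against. Your argument is precisely the standard one --- reducing $h_\mu(f)$ to $H_\mu(\cP \mid f^{-1}\cB)$ via Kolmogorov--Sinai, telescoping, and martingale convergence of conditional entropies, and then identifying $\mathbb{E}_\mu(\mathbf{1}_P \mid f^{-1}\cB) = 1/J_\mu(f)$ on $P$ by testing against sets $f^{-1}B$ --- and it is correct, with the minor caveats you already flag (finiteness of $H_\mu(\cP)$, which holds since $\cP$ is finite, and the null-set bookkeeping for the inverse branches) handled in the standard way.
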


This equality will be fundamental, later on the proof of Theorem \ref{mainthC}.

\

\subsection{Transfer Operator}

In this section  we recall some properties of the transfer operators. For more details on the transfer operator see e.g. \cite{S12} or \cite{PU10}.

In what follows, given $T : E \rightarrow E$ a bounded linear operator we denote its spectral radius by $\rho(T)$.

One of the main tools to study thermodynamical quantities  and indeed obtain equilibrium states, as well as its properties, is the \textit{Ruelle-Perron-Frobenius operator} or \textit{transfer operator}, which acts on function spaces:

\begin{definition}
Let $f : M \rightarrow M$ be a local homeomorphism on a compact and connected manifold. Given a complex continuous function $\phi: M \rightarrow \C$ , define the Ruelle-Perron-Frobenius operator or transfer operator $\mathcal{L}_{f, \phi}$ acting on functions $g : M \rightarrow \C$ this way:
 $$
 \mathcal{L}_{f,\phi}(g)(x) := \sum_{f(y) = x}e^{\phi(y)}g(y). \footnote{When no confusion is possible, for notational simplicity we omit the dependence of the transfer operator on $f$.}
 $$
\end{definition}

Classical thermodynamical results for sufficiently \textit{chaotic} dynamics, derive from good \textit{spectral} properties from this operator. 

If $\phi$ is a real continuous function, via Mazur's Separation Theorem, since $\Lo_{f,\phi}$ is a positive operator then it has $\rho(\Lo_{f,\phi}|_{C^0})$ as an eigenvalue for its dual operator, that is, there exists a probability $\nu_\phi$ with $(\Lo_{f,\phi}|_{C^{0}})^* \nu_{\phi} = \rho(\Lo_{f,\phi}|_{C^0}) \nu_\phi$.
If additionally $E \subset C^{0}(M , \C)$ is a Banach space continuously immersed in $C^{0}(M , \C)$ and $\Lo_{f,\phi}|_E$ has the spectral gap property, then $\rho(\Lo_{f,\phi}|_{C^0}) = \rho(\Lo_{f,\phi}|_{E})$ and $\Lo_{f,\phi}|_E$ admits an eigenfunction $h_\phi \in E$ with respect to $\rho(\Lo_{f,\phi}|_E)$ which is the leading eigenvalue. We can assume, up to rescaling, that  $\int h_{\phi}d\nu_{\phi} = 1$.
Then the probability $\mu_\phi=h_\phi \cdot \nu_\phi$ is proved to be $f-$invariant and is a candidate for the equilibrium state.

\begin{remark}
We recall the notion of analyticity for functions on Banach spaces. Let $E_{1} , E_{2}$ be Banach spaces and denote by $\mathcal{L}^{i}_{s}(E_{1} , E_{2})$ the space of symmetric $i$-linear transformations from
$E_{1}^{i}$ to $E_{2}$. For notational simplicity, given $P_{i} \in \mathcal{L}^{i}_{s}(E_{1} , E_{2})$ and $h\in E_{1}$ we set
$P_{i}(h) := P_{i}(h,\ldots,h)$.

We say the function $f : U \subset E_{1} \rightarrow E_{2}$, defined on an open subset,
is \emph{analytic} if for all $x \in U$ there exists $r > 0$ and for each $i\ge 1$ there exists $P_{i} \in \mathcal{L}^{i}_{s}(E_{1} , E_{2})$ (depending on $x$) such that
$$
f(x + h) = f(x) + \sum_{i=1}^{\infty}\frac{P_{i}(h)}{i!}
$$ for all $h \in B(0 , r)$ and the convergence is uniform.

Analytic functions on Banach spaces have completely similar properties to real analytic and complex analytic functions.
For instance, if $f : U \subset E \rightarrow F$ is analytic then $f$ is $C^{\infty}$ and for every $x\in U$ one has
$P_{i}=D^{i}f(x)$. For more details see for example \cite[Chapter~12]{C85}. 
\end{remark}

Let $T : E \rightarrow E$ be a bounded linear operator on a complex Banach space. Suppose that $T$ has the spectral gap property, by \cite{R55} there exists $\delta > 0$ such that if $\tilde{T} : E \rightarrow E$ is a bounded linear operator with $||T - \tilde{T}|| <\delta$ then $\tilde{T}$ has the spectral gap property. Moreover, $B(T , \delta) \ni \tilde{T} \mapsto \big(\rho(\tilde{T}) , P_{\rho(\tilde{T})}\big)$ is analytical, where $P_{\rho(\tilde{T})}$ is the spectral projection of $\tilde{T}$ with respect to the leading eigenvalue $\rho(\tilde{T})$. 

On the other hand, following the same proof of \cite[Proposition 4.1]{BCV16} we have

\begin{proposition}
Let $f : M \rightarrow M$ be a local homeomorphism on a compact and connected manifold and $E$ be a Banach algebra of functions such that $\mathcal{L}_{f, \phi}$ is a bounded linear operator of $E$ on $E$, for all $\phi \in E$. Then 
$$
E \ni \phi \rightarrow \mathcal{L}_{f,\phi}
$$
is analytical, where we endow on the counterdomain the topology generated by operator norm. In particular, $\R \ni t \mapsto \mathcal{L}_{f,t\phi}$ is real analytic.
\end{proposition}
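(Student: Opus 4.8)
The plan is to fix $\phi_{0}\in E$ and exhibit an explicit power series in the increment $h\in E$ whose coefficients are bounded symmetric multilinear maps and which converges, in the operator norm, to $\mathcal{L}_{f,\phi_{0}+h}$. Since the dependence of the operator on the potential enters only through the factor $e^{\phi}$, and $E$ is a Banach algebra, the natural candidate is obtained by expanding the exponential. For each $i\geq 0$ I would define $P_{i}\in\mathcal{L}^{i}_{s}(E,\mathcal{B}(E))$ by
$$
P_{i}(h_{1},\dots,h_{i})(g):=\mathcal{L}_{f,\phi_{0}}\!\left(h_{1}\cdots h_{i}\cdot g\right),
$$
where $\mathcal{B}(E)$ denotes the Banach space of bounded linear operators on $E$; this is symmetric by commutativity of the pointwise product and $i$-linear by construction, with $P_{0}:=\mathcal{L}_{f,\phi_{0}}$. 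Writing $P_{i}(h):=P_{i}(h,\dots,h)$, so that $P_{i}(h)(g)=\mathcal{L}_{f,\phi_{0}}(h^{i}g)$, the claim to establish is that $\mathcal{L}_{f,\phi_{0}+h}=\sum_{i=0}^{\infty}P_{i}(h)/i!$ in the sense of the Remark.

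First I would check convergence and boundedness. Because $E$ is a Banach algebra we have $\|h_{1}\cdots h_{i}\cdot g\|_{E}\leq\|h_{1}\|_{E}\cdots\|h_{i}\|_{E}\,\|g\|_{E}$, so each $P_{i}$ is bounded with $\|P_{i}(h)\|\leq\|\mathcal{L}_{f,\phi_{0}}\|\,\|h\|_{E}^{i}$. Hence
$$
\sum_{i=0}^{\infty}\frac{\|P_{i}(h)\|}{i!}\leq\|\mathcal{L}_{f,\phi_{0}}\|\,e^{\|h\|_{E}},
$$
which converges for every $h$ and, by the Weierstrass $M$-test, uniformly on every ball $B(0,R)\subset E$. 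Thus the series defines a bounded operator depending on $h$ with infinite radius of convergence, matching the notion of analyticity recalled in the Remark.

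It remains to identify the sum with $\mathcal{L}_{f,\phi_{0}+h}$, and this is the step demanding the most care. The idea is to evaluate on an arbitrary $g\in E$ and use the continuity of $\mathcal{L}_{f,\phi_{0}}$. Since $E$ is a Banach algebra, the partial sums $\sum_{i=0}^{N}h^{i}g/i!$ converge in $E$ to $e^{h}g$, where $e^{h}:=\sum_{i\geq 0}h^{i}/i!\in E$; applying the bounded operator $\mathcal{L}_{f,\phi_{0}}$ and passing to the limit yields
$$
\sum_{i=0}^{\infty}\frac{P_{i}(h)(g)}{i!}=\mathcal{L}_{f,\phi_{0}}\!\left(e^{h}g\right).
$$
Evaluating pointwise and using that $f$ is a local homeomorphism on a compact manifold, so each fibre $f^{-1}(x)$ is finite,
$$
\mathcal{L}_{f,\phi_{0}}\!\left(e^{h}g\right)(x)=\sum_{f(y)=x}e^{\phi_{0}(y)}e^{h(y)}g(y)=\mathcal{L}_{f,\phi_{0}+h}(g)(x),
$$
so the two operators agree on all of $E$, proving analyticity of $\phi\mapsto\mathcal{L}_{f,\phi}$.

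For the final assertion, I would observe that $t\mapsto t\phi$ is a bounded linear, hence analytic, map $\R\to E$, and that the composition of analytic maps between Banach spaces is analytic; composing with $\phi\mapsto\mathcal{L}_{f,\phi}$ gives that $\R\ni t\mapsto\mathcal{L}_{f,t\phi}$ is real analytic. The main obstacle is not the formal expansion but verifying that the pointwise exponential expansion genuinely converges in the operator norm to the correct operator; this is precisely where the Banach algebra hypothesis, which controls the products $h^{i}g$ and guarantees $e^{h}\in E$, together with the boundedness of $\mathcal{L}_{f,\phi_{0}}$, are essential.
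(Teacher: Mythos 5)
Your proof is correct and follows essentially the same route as the paper, which defers to \cite[Proposition 4.1]{BCV16}: expand $e^{\phi_0+h}=e^{\phi_0}e^{h}$ as a power series in $h$, use the Banach algebra structure to bound the symmetric multilinear coefficients $P_i(h_1,\dots,h_i)(g)=\mathcal{L}_{f,\phi_0}(h_1\cdots h_i g)$, and identify the sum with $\mathcal{L}_{f,\phi_0+h}$ by continuity of $\mathcal{L}_{f,\phi_0}$. The only implicit point worth a word is that the Banach-algebra exponential $e^{h}$ coincides with the pointwise exponential $y\mapsto e^{h(y)}$, which holds because convergence in each of the spaces considered ($C^{\alpha}$, $C^{r}$, $BV$) implies pointwise convergence.
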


Note that if $T = \mathcal{L}_{f,\phi}|_{E}$ has the spectral gap property we have that $P_{\rho(\tilde{T})}(g) = \int g d\nu_{\phi} \cdot h_{\phi}$. Define $SG(E) := \{\phi \in E : \mathcal{L}_{f,\phi}|_{E} \text{ has the spectral gap property} \}$, we then concluded:

\begin{corollary}\label{analy}
$SG(E) \subset E$ is an open subset and the following map is analytical:
$$
SG(E) \ni \phi \mapsto \big(\rho(\mathcal{L}_{f,\phi}) , h_{\phi} , \nu_{\phi}).
$$
\end{corollary}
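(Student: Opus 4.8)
The plan is to realize the map in the corollary as a composition of analytic maps and to read off openness of $SG(E)$ directly from the perturbation result of \cite{R55} quoted above. Write $T_\phi := \mathcal{L}_{f,\phi}|_{E}$ and, for $\phi \in SG(E)$, let $P_\phi := P_{\rho(T_\phi)}$ denote the spectral projection onto the leading eigendirection. The two external inputs are: (a) the Proposition preceding the corollary, which gives that $\phi \mapsto T_\phi$ is analytic from $E$ into the Banach space of bounded operators on $E$ endowed with the operator-norm topology; and (b) the statement borrowed from \cite{R55}, which provides, around any spectral-gap operator $T$, a radius $\delta > 0$ inside which the spectral gap persists and inside which $\tilde T \mapsto (\rho(\tilde T), P_{\rho(\tilde T)})$ is analytic.

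For openness I would fix $\phi_0 \in SG(E)$ and apply (b) to $T = T_{\phi_0}$, obtaining $\delta > 0$. Since $\phi \mapsto T_\phi$ is analytic, hence continuous, there is a neighborhood $U \ni \phi_0$ in $E$ with $\|T_\phi - T_{\phi_0}\| < \delta$ for every $\phi \in U$; by (b) each such $T_\phi$ has the spectral gap property, so $U \subset SG(E)$, proving $SG(E)$ open. For the analyticity of the eigenvalue and the projection, on $U$ the map $\phi \mapsto (\rho(T_\phi), P_\phi)$ is the composition of the analytic map from (a) with the analytic map from (b), hence analytic; its first coordinate already yields analyticity of $\phi \mapsto \rho(\mathcal{L}_{f,\phi})$.

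To recover $h_\phi$ and $\nu_\phi$ I would exploit the rank-one form $P_\phi(g) = \left(\int g\, d\nu_\phi\right) h_\phi$ recorded just before the corollary. Evaluating the analytic operator-valued map $\phi \mapsto P_\phi$ at the fixed vector $\um \in E$ (the constant function, which lies in $E$ since $E$ is a unital Banach algebra of functions) is a bounded linear operation, so $\phi \mapsto P_\phi(\um)$ is analytic into $E$; and $P_\phi(\um) = \left(\int \um\, d\nu_\phi\right) h_\phi = h_\phi$ because $\nu_\phi$ is a probability, giving $h_\phi = P_\phi(\um)$ analytically. For $\nu_\phi$ I would pass to adjoints: since adjunction is an isometric linear map from operators on $E$ to operators on $E^{*}$, the map $\phi \mapsto P_\phi^{*}$ is analytic, and for fixed $\Lambda \in E^{*}$ one computes $P_\phi^{*}\Lambda = \Lambda(h_\phi)\,\nu_\phi$, where $\nu_\phi$ is viewed as the functional $g \mapsto \int g\, d\nu_\phi$. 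Choosing $\Lambda$ with $\Lambda(h_{\phi_0}) \neq 0$ and shrinking $U$ so that the scalar analytic function $\phi \mapsto \Lambda(h_\phi) = (P_\phi^{*}\Lambda)(\um)$ stays nonzero, the identity $\nu_\phi = (P_\phi^{*}\Lambda)/\Lambda(h_\phi)$ presents $\phi \mapsto \nu_\phi$ as an $E^{*}$-valued analytic map, being the quotient of an analytic map by a nonvanishing scalar analytic function. As analyticity is local and $\phi_0 \in SG(E)$ was arbitrary, the map is analytic on all of $SG(E)$.

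The only genuinely delicate point is the consistency of the two normalizations underlying the rank-one decomposition: the projection identity $P_\phi^{2} = P_\phi$ forces $\int h_\phi\, d\nu_\phi = 1$, while $\nu_\phi$ being a probability forces $\int \um\, d\nu_\phi = 1$, and it is precisely these two facts that make $P_\phi(\um) = h_\phi$ hold and pin down the scaling of the pair $(h_\phi, \nu_\phi)$ uniquely. Once this bookkeeping is verified, every remaining step is a routine composition or quotient of analytic maps, so I do not expect a serious obstacle; the mathematical substance is entirely carried by inputs (a) and (b).
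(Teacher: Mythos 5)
Your proposal is correct and follows exactly the route the paper intends: the corollary is obtained by composing the analytic map $\phi\mapsto\mathcal{L}_{f,\phi}$ (the preceding Proposition) with the perturbation result of \cite{R55} giving openness of the spectral gap and analyticity of $\tilde T\mapsto(\rho(\tilde T),P_{\rho(\tilde T)})$, and then reading $h_\phi$ and $\nu_\phi$ off the rank-one projection $P_\phi(g)=\big(\int g\,d\nu_\phi\big)h_\phi$. Your extraction of $h_\phi=P_\phi(\um)$ and of $\nu_\phi$ via the adjoint, together with the normalization bookkeeping, simply makes explicit the details the paper leaves implicit in its ``we then concluded.''
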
 

One weaker spectral property is called \textit{quasi-compactness}:

\begin{definition}
Given $E$ a complex Banach space and $T:E \to E$ a bounded linear operator, we say that $T$ is quasi-compact if there exists $0<\sigma<\rho(T)$ and a decomposition of $E=F\oplus H$ as follows: $F$ and $H$ are closed and $T$-invariant, $\dim F < \infty$, $\rho(T|_F)>\sigma$ and $\rho(T|_H) \leq \sigma$.
\end{definition}

In general, spectral gap implies quasi-compactness.
Later we will proof that, in the context of Theorem \ref{mainthC}, quasi-compactness is a sufficient condition for spectral gap. 

A definition equivalent to quasi-compactness can be given via the \textbf{essential spectral radius}:

\begin{definition}
Given $E$ a complex Banach space and $T:E \to E$ an bounded linear operator, define
$$\rho_{ess}(T):=\inf\{r>0; \;sp(L)\setminus \overline{B(0,r)} \text{ contains only eigenvalues of finite multiplicity}\}$$
\end{definition}

Thus quasi-compactness is equivalent to having $\rho_{ess}(T) < \rho(T)$, and so estimates on the essential spectral radius and the spectral radius will be of the utmost importance.




\section{Proof of the main results}\label{Proofs}

This sections is devoted to proving  Theorems \ref{mainthA}, \ref{mainthC} and Corollary \ref{mainthD}.

\subsection{Thermodynamical phase transition}

This section is devoted to the proof of Theorem \ref{mainthA}.

Given a unidimensional local diffeomorphism $f:\Sc^1 \to \Sc^1$ and an $f-$invariant measure $\mu$, we define the Lyapunov exponent for this measure $\chi_\mu(f):=\int \log|Df|d\mu$. Our goal is to understand the smoothness of the topological pressure function

$$P(t):=\sup\{h_\mu(f)-t\chi_\mu(f),\; \mu \text{ is an } f-\text{invariant probability} \},\; t \in \R,$$
 which is known, by the Variational Principle (see e.g. \cite{OV16}), that this supremum can be taken only over the space of $f-$ergodic probabilities $M_e(f)$. Thus, denote the following sets of measure with non-negative and non-positive Lyapunov exponents:

$$M_e^+(f):=\{\mu\in M_e(f),\;\chi_\mu(f)\geq 0 \} \text{ and }$$ 
$$M_e^-(f) :=\{\mu\in M_e(f),\; \chi_\mu(f)\leq 0\},$$
and the pressure functions restricted to each of these sets

 $$P_+(t):=\sup\{P_\mu(t),\; \mu \in M_e^+(f) \} \text{ and }$$
$$P_-(t):=\sup\{P_\mu(t),\; \mu \in M_e^-(f)\},$$ 
with $P_\mu(t):=h_\mu(f)-t\chi_\mu(f)$. With these notations, we prove the following lemmas

\begin{lemma}\label{L2}
Let $f:\Sc^1 \to \Sc^1$ be a $C^1$  local diffeomorphism such that $M^+_e(f)$ is non-empty, then the function  $P_+(t)$ is convex, non-increasing and has a zero in $(0,1]$.
\end{lemma}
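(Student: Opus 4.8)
The plan is to extract all three assertions directly from the description of $P_+$ as an upper envelope of affine maps. For fixed $\mu\in M_e^+(f)$ the function $t\mapsto P_\mu(t)=h_\mu(f)-t\,\chi_\mu(f)$ is affine with slope $-\chi_\mu(f)$, so $P_+(t)=\sup_{\mu\in M_e^+(f)}P_\mu(t)$ is a supremum of affine functions and is therefore convex and lower semicontinuous. First I would check that $P_+$ is finite-valued and continuous: since $f$ is a $C^1$ local diffeomorphism of $\Sc^1$ its metric entropies are bounded, $h_\mu(f)\le h_{top}(f)=\log\deg(f)<\infty$, while $\chi_\mu(f)=\int\log|Df|\,d\mu$ ranges in the bounded interval $[\min\log|Df|,\max\log|Df|]$; hence $P_+(t)<\infty$ for every $t$ and $P_+$ is Lipschitz on compact sets, so the convex function $P_+$ is in fact continuous. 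The monotonicity claimed in the statement then follows because all the slopes $-\chi_\mu(f)$, $\mu\in M_e^+(f)$, share a common sign, so that $P_+$ inherits the corresponding monotone (non-decreasing) behaviour from its affine constituents.

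To locate the zero I would evaluate $P_+$ at $t=0$ and $t=1$. At $t=0$ one has $P_+(0)=\sup_{\mu\in M_e^+(f)}h_\mu(f)\ge 0$, since $M_e^+(f)$ is non-empty and entropy is non-negative. At $t=1$ the Margulis--Ruelle inequality is the key input: for every $\mu\in M_e^+(f)$ it gives $h_\mu(f)\le\max\{0,\chi_\mu(f)\}=\chi_\mu(f)$, where the equality uses precisely that $\chi_\mu(f)\ge 0$ on $M_e^+(f)$; consequently $P_\mu(1)=h_\mu(f)-\chi_\mu(f)\le 0$ for every such $\mu$, and passing to the supremum yields $P_+(1)\le 0$.

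Finally I would combine the endpoint estimates with the continuity of $P_+$: from $P_+(0)\ge 0\ge P_+(1)$ together with the intermediate value theorem, $P_+$ vanishes at some point of $[0,1]$, and a short additional argument using convexity refines this to a zero in the half-open interval $(0,1]$, which is the assertion of the lemma. I expect the only genuinely delicate step to be the application of the Margulis--Ruelle inequality on $M_e^+(f)$, namely the observation that on this set the bound collapses to $h_\mu(f)\le\chi_\mu(f)$ and thereby distinguishes $t=1$ as the decisive endpoint; the convexity and continuity inputs, as well as the sign bookkeeping behind the monotonicity, are routine.
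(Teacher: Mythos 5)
Your proposal follows essentially the same route as the paper: convexity as a supremum of affine functions, monotonicity from the common sign of the slopes $-\chi_\mu(f)$, and the zero located via $P_+(0)\ge 0$, the Margulis--Ruelle inequality giving $P_+(1)\le 0$, and continuity plus the intermediate value theorem. One small correction: since every $\mu\in M_e^+(f)$ has $\chi_\mu(f)\ge 0$, the slopes $-\chi_\mu(f)$ are all non-positive, so your own argument actually yields that $P_+$ is \emph{non-increasing} --- which is what the paper's proof establishes and what is used later (the ``non-decreasing'' in the lemma statement is a typo) --- so you should state the monotonicity in that direction rather than echoing the statement.
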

\begin{proof}
$P_+$ is non-increasing. For each $\mu \in M_e^+(f)$ fixed, the line $P_\mu$ is non-increasing and thus for any $s>t$ we have $P_\mu(s)\leq P_\mu(t)$. Taking supremum over all $\mu$, it follows that $P_+(s)\leq P_+(t)$ and $P_+$ is non-increasing. $P_+$ is in fact convex. Since $P_+$ is supremum of straight lines $\R \ni t \mapsto P_\mu(t)$, where $\mu \in M_e^+(f)$, then $P_+$ is convex.
Finally, note that $$P_+(0)=\sup\{h_\mu(f), \mu \in M_e^+(f)\}=h_{top}(f)> 0$$ (see Remark \ref{remar21}) and  by Ruelle-Margulis's inequality (see \cite{Rue78}) $$P_+(1)=\sup\{h_\mu(f)-\chi_\mu(f), \mu \in M_e^+(f)\}\leq 0.$$ Thus, as $P_+$ is convex, in particular continuous, it has a zero in $(0,1]$. 
\end{proof}

The Figure \ref{Fig2} gives a geometric intuition of the function  $P_+$ where each line is the graph of $t \mapsto P_{\mu}(t)$, for a fixed ergodic probability $\mu$ in $M_e^+(f)$:

\begin{figure}[htb]
\includegraphics[width=8.1cm, height=4.5cm]{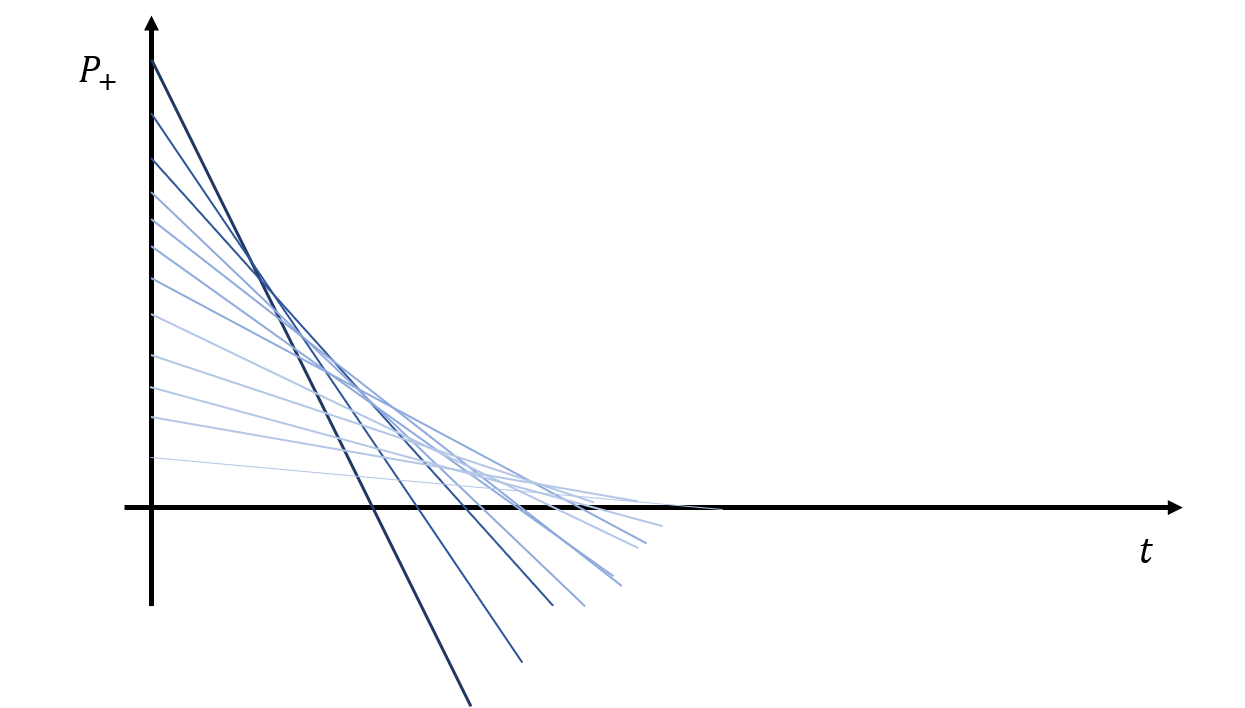}
\caption{}
\label{Fig2}
\centering
\end{figure}

\begin{lemma}\label{L3}
Let $f:\Sc^1 \to \Sc^1$ be $C^{1}-$local diffeomorphism  such that $M^-_e(f) \neq \emptyset$ then  $t \mapsto P_-(t)$ is linear and not decreasing.
\end{lemma}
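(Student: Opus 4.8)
The plan is to use the Margulis--Ruelle inequality to annihilate the entropy term, reducing each $P_\mu$ to a ray through the origin, and then to compute the upper envelope of these rays.

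First I would fix an arbitrary $\mu \in M_e^-(f)$ and invoke the Margulis--Ruelle inequality (\cite{Rue78}). Since by definition $\chi_\mu(f) \le 0$, it gives $h_\mu(f) \le \max\{0, \chi_\mu(f)\} = 0$; as metric entropy is non-negative, this forces $h_\mu(f) = 0$. Hence $P_\mu(t) = h_\mu(f) - t\,\chi_\mu(f) = -t\,\chi_\mu(f)$, so every $P_\mu$ with $\mu \in M_e^-(f)$ is a line through the origin whose slope $-\chi_\mu(f)$ is non-negative.

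Next I would pass to the supremum. Set $\chi_0 := \inf\{\chi_\mu(f) : \mu \in M_e^-(f)\}$; this is finite, because $\log|Df|$ is continuous on the compact circle and hence bounded, and it satisfies $\chi_0 \le 0$ since $M_e^-(f)\neq\emptyset$. For $t \ge 0$ the factor $t$ is non-negative, so it may be pulled out of the supremum, giving $P_-(t) = \sup_{\mu\in M_e^-(f)} \big(-t\,\chi_\mu(f)\big) = -t\,\chi_0$. Thus on the relevant range $[0,\infty)$ the function $P_-$ is affine with slope $-\chi_0 \ge 0$, hence linear and non-decreasing, as claimed.

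The main point of care, and the reason I would be precise about what ``linear'' means, is the sign handling in the last step: pulling $t$ out of the supremum is legitimate only for $t \ge 0$, whereas for $t < 0$ the optimizing measure switches to the one of largest (least negative) exponent, so the slope can change and $P_-$ is in general only piecewise affine across the origin. This is harmless for the overall argument, since $P_-$ enters the pressure $P = \max\{P_+, P_-\}$ as the dominant term only for $t \ge t_0 > 0$, precisely where the identity $P_-(t) = -\chi_0\, t$ is the one we need.
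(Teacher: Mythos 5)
Your proof is correct and follows essentially the same route as the paper's: the Margulis--Ruelle inequality forces $h_\mu(f)=0$ for every $\mu\in M_e^-(f)$, so each $P_\mu$ is a ray through the origin with non-negative slope $-\chi_\mu(f)$, and for $t\ge 0$ the upper envelope is the ray coming from the infimal exponent. If anything you are more careful than the paper, which invokes \cite{CLR03} to realize the minimizing measure ergodically and then asserts $P_-=P_\mu$ on all of $\R$, whereas you correctly observe that for $t<0$ the optimizing measure switches, so $P_-$ is a priori only piecewise affine across the origin --- a point the paper glosses over but which, as you note, is harmless in the range where $P_-$ is actually used.
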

\begin{proof}
 By Ruelle-Margulis's inequality (see \cite{Rue78}), if $\chi_\mu(f)\leq 0$ then $h_\mu(f)=0$ and $P_\mu(t) = -t \chi_\mu(f)$. 
  Then $P_\mu$ is linear and not decreasing, so $P_- = P_{\mu}$ where $\chi_{\mu}(f) = \min_{\nu \in \mathcal{M}_{1}(f)}\chi_{\nu}(f)$. By \cite[Theorem 3]{CLR03} 
 $\mu$ can be taken ergodic, and therefore
 $P_-=P_\mu$ is linear and not decreasing. 
 \end{proof}

The Figure \ref{Fig3} gives a geometric intuition of the function  $P_-$ where each line is the graph of $t \mapsto P_{\mu}(t)$, for a fixed ergodic probability $\mu$ in $M_e^-(f)$:

\begin{figure}[ht]
\includegraphics[width=7cm, height=4cm]{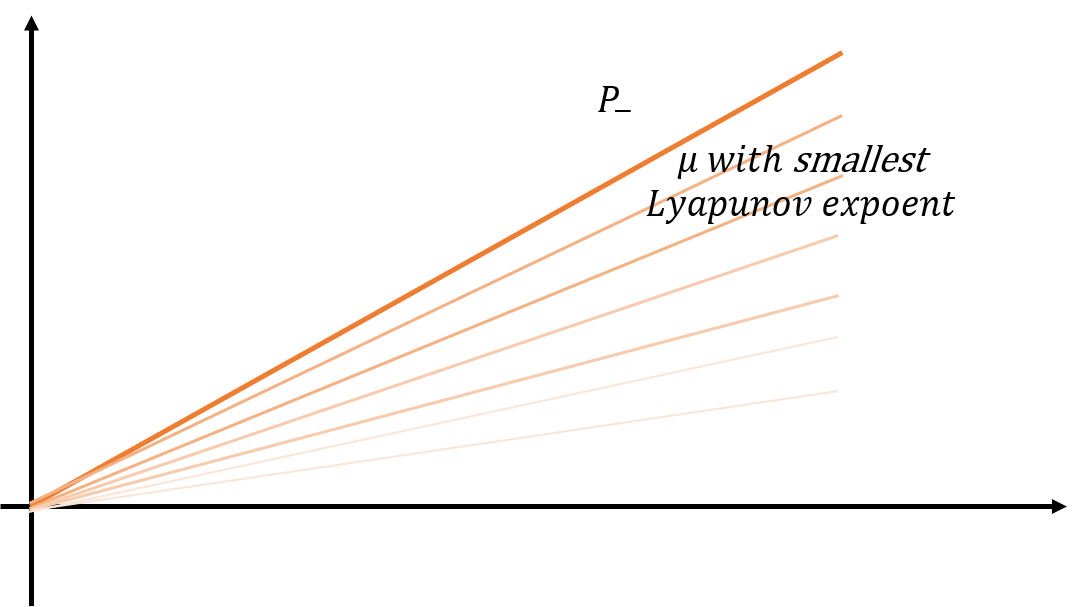}
\caption{}
\label{Fig3}
\centering
\end{figure}

It follows from definition of $P$ that $P=\max\{P_-,P_+\}$. Moreover, when $h_{top}(f) = 0$ we have $P_{\nu}(t) = -t\chi_{\nu}(f)$ and thus: $P(t) = -t\chi_{\mu_{1}}(f)$ for all $t > 0$, where $\chi_{\mu_{1}}(f) = \min_{\nu \in \mathcal{M}_{1}(f)}\chi_{\nu}(f)$, and $P(t) = -t\chi_{\mu_{2}}(f)$ for all $t < 0$, where $\chi_{\mu_{2}}(f) = \max_{\nu \in \mathcal{M}_{1}(f)}\chi_{\nu}(f)$.

\break
Using Lemmas \ref{L2} and \ref{L3}, we prove the main theorem.
\begin{proof}[Proof of Theorem \ref{mainthA}]

Since $h_{top}(f)>0$, by Ruelle-Margulis's inequality (see \cite{Rue78}) $f$ admits some ergodic invariant probability with positive Lyapunov exponent, hence $M_e^+(f)\neq \emptyset $. Using again the 
Ruelle-Margulis's inequality we have that $P_-(0) = 0 < h_{top}(f) = P_+(0) $. On the other hand, by \cite[Theorem 3]{CLR03} there exists $\mu \in \mathcal{M}_{e}(f)$ such that $\chi_{\mu}(f) = \min_{\nu \in \mathcal{M}_{1}(f)}\chi_{\nu}(f)$. Since $f$ is not an expanding dynamics, it follows from \cite[Theorem 5]{CLR03} that $\chi_{\mu}(f) \leq 0.$
 Thus, we will discuss the possibilities for this ergodic probability $\mu$:
 
 \quad
 
\begin{itemize}
\item{\textbf{Case $\chi_\mu(f)=0$}}. In this case, by previous Lemma $P_-=P_\mu\equiv 0$. Thus, fixed $t_0 \in (0,1]$ the lower zero of $P_+$, that is, $t_{0} := \min\{t \in \R : P_+(t) = 0\}$ we have:
$$
P(t)=
\begin{cases}
P_+(t), & \text{ for } t\leq t_0\\
0, & \text{ for }t \geq t_0
\end{cases}
$$
This means that $P$ cannot be real analytic in $t_0$ because all its lateral derivatives on the right are zero, but $P$ is not identically zero for $t < t_{0}$. 

\quad

\item{\textbf{Case} $\chi_\mu(f)<0$.} In this case, $P_-$ is not trivially zero, but an increasing line. Thus, since
$P_-(t) \xrightarrow[t\to+\infty]{}+\infty$ and $P_+(t)$ is not increasing there exists  $t > 0$ such that $P_+(t) \leq P_-(t)$. Define $t_{1} := \min\{ t \geq 0: P_+(t) \leq P_-(t)\}$. Since $P_+(0) > P_-(0) = 0$ then $t_{1} > 0$. Hence
$$
P(t)=
\begin{cases}
P_+(t), & \text{ for }t\leq t_1\\
P_-(t), &  \text{ for } t\geq t_1
\end{cases}
$$
Note that $P$ cannot even be differentiable in $t_{1}$, in fact suppose by absurd that $P$ is differentiable in $t_{1}$. Then
$$
0 \neq -\chi_{\mu}(f) = \lim_{t \searrow t_{1}}\frac{P(t) - P(t_{1})}{t - t_{1}} = \lim_{t \nearrow t_{1}}\frac{P(t) - P(t_{1})}{t - t_{1}} \leq 0,
$$
where the last inequality follows from $P_+$ being non-increasing. This is absurd.
\end{itemize}

We concluded in both cases that $f$ has phase transition with respect to $-\log |Df|$.
\end{proof}

The Figures \ref{Fig4} and \ref{Fig5} gives a geometric intuition of the pressure function in each case previous.

\begin{figure}[ht]
\includegraphics[width=8.1cm, height=4.5cm]{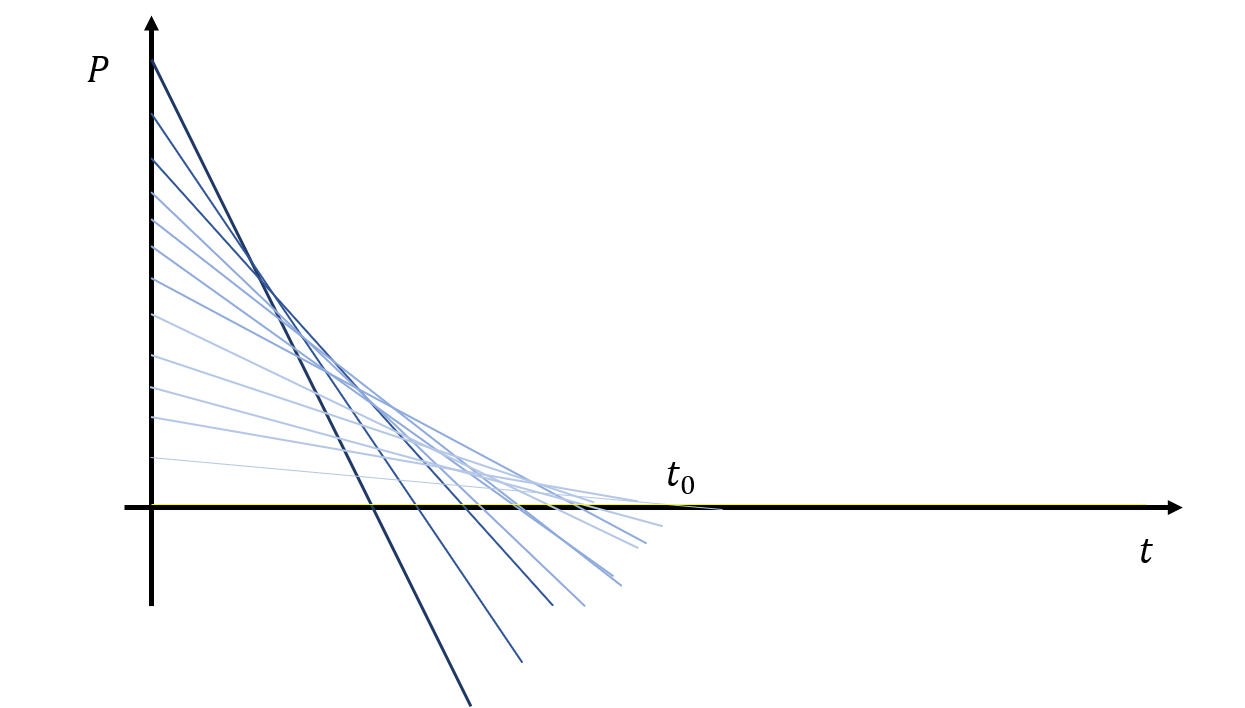}
\caption{Case $\chi_\mu(f)=0$}
\label{Fig4}
\centering
\label{caso0}
\end{figure}

\begin{figure}[ht]
\includegraphics[width=7cm, height=4cm]{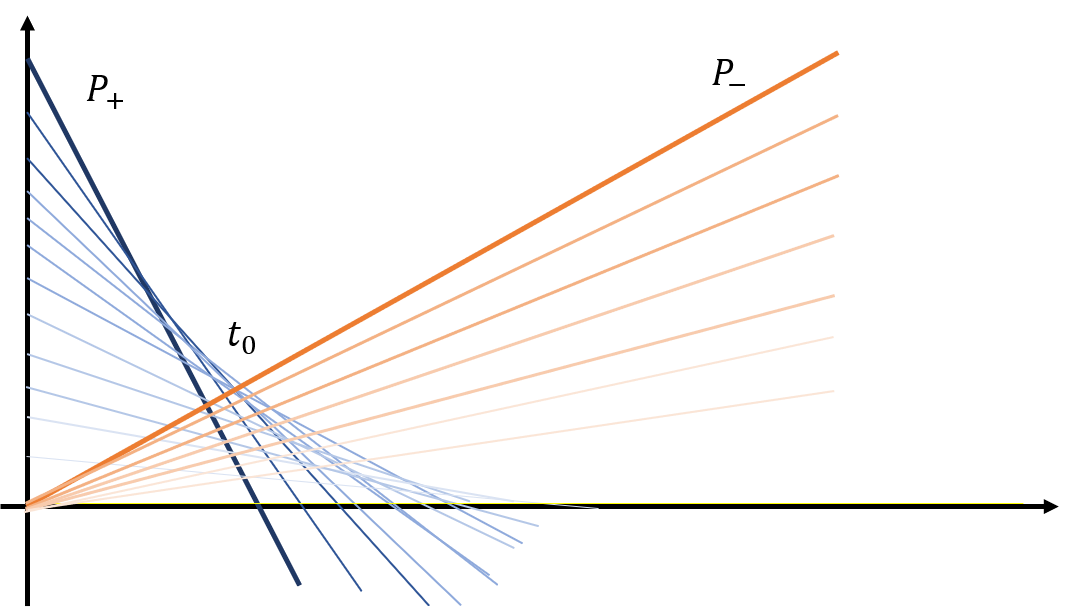}
\caption{Case $\chi_\mu(f)<0$}
\label{Fig5}
\centering
\label{caso-}
\end{figure}

\subsection{Effective phase transitions}

This section is devoted to the proof of the Theorem \ref{mainthC}. Suppose that $f$ is a transitive local diffeomorphism of the circle with degree at least two. First we remember that $f$ is conjugate to $dx$ mod $1$ where $d=\deg(f)$, by \cite[Corollary 4.3]{CM86}.

From this conjugacy, we then find that $f$ is expansive, topologically exact, and it admits generating partition by domains of injectivity. In particular, we can apply Rokhlin's formula (see \ref{rokf}).

Also, since $f$ is transitive and has  positive topological entropy, every ergodic measure has non-negative Lyapunov exponents,  by \cite[Corollary 1.2]{A20}\footnote{See Apendix \ref{Ap1} for more detalis}, and therefore $P=P_+$ and $P$ is non-increasing, by  Lemma \ref{L2}. 
Furthermore, $t_0:=\inf\{t\in(0,1]; P(t)=0\}$ is the parameter of the thermodynamical phase transition in Theorem \ref{mainthA} and $P(t) = 0$ for all $t \geq t_{0}$.

\

\subsubsection{Absence of  spectral gap}

This section is devoted to the proof of spectral phase transition. In fact, we will show:

\begin{proposition}\label{prop1}
Let $E = BV[\Sc^1], C^{\alpha}(\Sc^{1} , \C)$ or $C^{r}(\Sc^{1} , \C)$  and let $f:\Sc^1 \to \Sc^1$ be a transitive $C^{1}-$local diffeomorphism with $Df \in E$. 
If $f$ is not an expanding  dynamics then $\mathcal{L}_{f, -t_{0}\log|Df|}$ has not the spectral gap property acting on $E$. 
\end{proposition}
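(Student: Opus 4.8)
The goal is to show that at the phase transition parameter $t_0$ the transfer operator $\mathcal{L}_{f, -t_0\log|Df|}$ fails to have the spectral gap property. The plan is to argue by contradiction: I would assume that $\mathcal{L}_{f, -t_0\log|Df|}$ does have the spectral gap property on $E$ and derive a contradiction with the fact (established just above) that the pressure function $P(t) = P_{top}(f, -t\log|Df|)$ is identically zero for $t \geq t_0$ while being non-constant before $t_0$. The strategy follows the classical template for expanding maps (as in \cite{OV16}): first identify the leading eigenvalue with the topological pressure, then use analyticity of the leading eigenvalue to contradict the non-analytic/constant behaviour of $P$ at $t_0$.

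\medskip

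The key steps, in order, are as follows. First, assuming spectral gap at $t_0$, let $\lambda(t_0) = \rho(\mathcal{L}_{f, -t_0\log|Df|})$ be the leading eigenvalue, with eigenfunction $h_{t_0} \in E$ and dual eigenmeasure $\nu_{t_0}$, normalized so that $\int h_{t_0}\, d\nu_{t_0} = 1$. Set $\mu_{t_0} = h_{t_0}\cdot\nu_{t_0}$, which is $f$-invariant. The crucial identification is that $\log\lambda(t_0) = P(t_0)$. To see this, I would compute the Jacobian of $\mu_{t_0}$ explicitly from the eigenfunction/eigenmeasure relations: the defining equation $(\mathcal{L}^*_{f,-t_0\log|Df|})\nu_{t_0} = \lambda(t_0)\nu_{t_0}$ yields $J_{\mu_{t_0}}(f) = \lambda(t_0)\, |Df|^{t_0}\, (h_{t_0}\circ f)/h_{t_0}$. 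Then, since $f$ admits a generating partition by domains of injectivity (established via the conjugacy from \cite{CM86}), Rokhlin's formula (Theorem \ref{rokf}) applies and gives
$$
h_{\mu_{t_0}}(f) = \int \log J_{\mu_{t_0}}(f)\, d\mu_{t_0} = \log\lambda(t_0) + t_0\chi_{\mu_{t_0}}(f),
$$
where the telescoping term $\int \log\big((h_{t_0}\circ f)/h_{t_0}\big)\, d\mu_{t_0}$ vanishes by $f$-invariance of $\mu_{t_0}$. Rearranging, $\log\lambda(t_0) = h_{\mu_{t_0}}(f) - t_0\chi_{\mu_{t_0}}(f) = P_{\mu_{t_0}}(t_0) \leq P(t_0)$. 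The reverse inequality $\log\lambda(t_0) \geq P(t_0)$ follows from the variational characterization together with the fact that $\rho(\mathcal{L}|_{C^0})$ dominates the pressure (standard for this operator); hence $\log\lambda(t_0) = P(t_0) = 0$, so $\lambda(t_0) = 1$.

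\medskip

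Second, I would invoke the analyticity machinery. By Corollary \ref{analy}, spectral gap is an open condition and the map $\phi \mapsto \rho(\mathcal{L}_{f,\phi})$ is analytic on $SG(E)$; combined with the analyticity of $t \mapsto -t\log|Df|$ into $E$ (the Proposition preceding Corollary \ref{analy}), the composite $t \mapsto \lambda(t) = \rho(\mathcal{L}_{f,-t\log|Df|})$ is real-analytic on an open neighbourhood of $t_0$. Repeating the Rokhlin computation above for $t$ in this neighbourhood gives $\log\lambda(t) = P(t)$ for all such $t$, so $P$ would be real-analytic near $t_0$. But from Theorem \ref{mainthA} (and the remark that $P = P_+$ is non-increasing with $P(t) = 0$ for all $t \geq t_0$ and $t_0 = \inf\{t : P(t)=0\}$), the function $P$ is constant equal to zero on $[t_0,\infty)$ yet not identically zero on any left-neighbourhood of $t_0$. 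A real-analytic function vanishing on a half-interval $[t_0, t_0+\epsilon)$ must vanish identically near $t_0$, contradicting $P(t) > 0$ for $t < t_0$. This contradiction shows the spectral gap cannot hold at $t_0$.

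\medskip

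The main obstacle I anticipate is the clean identification $\log\lambda(t) = P(t)$ on a full neighbourhood of $t_0$, not merely at $t_0$. At $t_0$ itself the identity only gives $\log\lambda(t_0) \leq P(t_0)$ directly from Rokhlin unless one also knows $\mu_{t_0}$ is genuinely the equilibrium state; establishing the matching lower bound requires that the eigenmeasure $\nu_{t_0}$ carries the conformal structure whose pressure equals $\log\rho(\mathcal{L}|_{C^0})$, and that this coincides with the variational pressure $P(t_0)$. I would handle this by verifying that $\rho(\mathcal{L}_{f,-t_0\log|Df|}|_{C^0})$ equals the topological pressure through the standard correspondence between the spectral radius on $C^0$ and $P_{top}$ for the geometric potential (using that $f$ is topologically exact, so the operator has a genuine conformal reference measure), after which the spectral-gap hypothesis forces $\rho|_E = \rho|_{C^0}$. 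Once that coincidence is secured at $t_0$, the analytic extension to nearby $t$ propagates it automatically, and the contradiction with the constant behaviour of $P$ closes the argument.
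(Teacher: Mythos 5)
Your proposal is correct and follows essentially the same route as the paper: assume spectral gap at $t_0$, use Rokhlin's formula to identify $\log\rho(\mathcal{L}_{f,-t\log|Df|}|_E)$ with $P_{top}(f,-t\log|Df|)$ (Lemma~\ref{Lemapress}), then use openness and analyticity of the leading eigenvalue (Corollary~\ref{analy}) to force analyticity of the pressure near $t_0$, contradicting Theorem~\ref{mainthA}. The only point where you are vaguer than the paper is the reverse inequality $\log\lambda(t_0)\geq P(t_0)$, which the paper settles self-containedly in Lemma~\ref{Lemapress} by showing via Rokhlin's formula and concavity of $\log$ that no invariant measure can exceed $\log\lambda_\phi$, rather than appealing to the $C^0$ spectral radius versus pressure correspondence.
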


Now in order to prove Proposition \ref{prop1}, we must prove $P_{top}(f,-t\log|Df|)=\log \rho (\Lo_{f,-t\log|Df|}|_E)$.
The case $E=BV(\Sc^1)$ is a direct consequence of \cite[Theorem 3]{BK90}, since $f$ is continuous and monotone in each element of the generating partition.
For the case $E=C^{\alpha}(\Sc^1,\C)$ or $C^{r-1}(\Sc^1,\C)$ we followed the ideas from \cite[Proposition 3.5]{Ba00} and proved the following:

\begin{lemma}\label{Lemaxi}
Let $f:\Sc^1 \rightarrow \Sc^1$ be a transitive $C^r-$local diffeomorphism, let $E=BV[\Sc^{1}], C^{\alpha}(\Sc^1,\C)$ or $C^{r}(\Sc^1,\C)$, and let $\phi \in E$ be a real continuous function. If $\Lo_{f,\phi}\varphi=\lambda\varphi$,
with $|\lambda|=\rho(\Lo_{f,\phi}|_E)$ and $\varphi \in E\setminus \{0\}$, then
$\Lo_{f,\phi}|\varphi|=\rho(\Lo_{f,\phi}|_E)|\varphi|.$ Furthermore, $\rho(\Lo_{f , \phi}|_{C^0})=\rho(\Lo_{f,\phi}|_{E})$, $\varphi$ is bounded away from zero and $\dim\ker(\Lo_{f,\phi}|_E - \lambda I) = 1$.
\end{lemma}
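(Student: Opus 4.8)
The plan is to treat this as a Perron--Frobenius/Ruelle statement, exploiting that $\Lo_{f,\phi}$ is a \emph{positive} operator (since $\phi$ is real, every weight $e^{\phi}$ is positive) together with the topological exactness of $f$, which is available here because the conjugacy to $x\mapsto dx \bmod 1$ makes $f$ topologically exact with uniformly dense sets of preimages. The reference object throughout will be the $C^0$-dual eigenmeasure $\nu$ produced by Mazur's theorem, i.e. the probability with $(\Lo_{f,\phi}|_{C^0})^*\nu=\rho_0\nu$, where I abbreviate $\rho_0:=\rho(\Lo_{f,\phi}|_{C^0})$ and $\rho_E:=\rho(\Lo_{f,\phi}|_E)$.

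First I would record the basic subinvariance: applying the triangle inequality termwise in the definition of the operator gives $|\Lo_{f,\phi}\varphi|\le \Lo_{f,\phi}|\varphi|$ pointwise, so the relation $\Lo_{f,\phi}\varphi=\lambda\varphi$ with $|\lambda|=\rho_E$ yields
$$\rho_E\,|\varphi|\le \Lo_{f,\phi}|\varphi| \qquad (\ast)$$
and, iterating, $\rho_E^{\,n}|\varphi|\le \Lo_{f,\phi}^n|\varphi|$ for every $n$. Next I would identify $\rho_E=\rho_0$. In the H\"older and $C^r$ cases $\varphi$ is continuous, so $\Lo_{f,\phi}\varphi=\lambda\varphi$ exhibits $\lambda$ as an eigenvalue of $\Lo_{f,\phi}|_{C^0}$ and $\rho_E=|\lambda|\le\rho_0$. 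For the reverse I would use positivity: $\|\Lo_{f,\phi}^n\|_{C^0}=\|\Lo_{f,\phi}^n\um\|_{\infty}$, hence $\rho_0=\lim_n\|\Lo_{f,\phi}^n\um\|_\infty^{1/n}$ by Gelfand; since $\um\in E$ and $E\hookrightarrow C^0$ continuously, $\|\Lo_{f,\phi}^n\um\|_\infty\le C\|\Lo_{f,\phi}^n\um\|_E\le C\|\Lo_{f,\phi}^n\|_E\,\|\um\|_E$, and taking $n$-th roots gives $\rho_0\le\rho_E$. Thus $\rho_E=\rho_0=:\rho$.

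With the radii identified I would upgrade $(\ast)$ to equality. Topological exactness forces $\nu$ to have full support: if $\nu(U)=0$ for an open $U$, choose $g\ge0$ continuous, $g\not\equiv0$, supported in $U$; then $\int \Lo_{f,\phi}^n g\,d\nu=\rho^n\int g\,d\nu=0$, whereas exactness gives $f^n(\{g>0\})=\Sc^1$ for large $n$, so $\Lo_{f,\phi}^n g>0$ everywhere, contradicting the vanishing integral. Now integrating $(\ast)$ against $\nu$ and using $\int \Lo_{f,\phi}|\varphi|\,d\nu=\rho_0\int|\varphi|\,d\nu=\rho\int|\varphi|\,d\nu$ gives $\int\big(\Lo_{f,\phi}|\varphi|-\rho|\varphi|\big)\,d\nu=0$; the integrand is continuous, nonnegative, and $\nu$ has full support, so it vanishes identically, i.e. $\Lo_{f,\phi}|\varphi|=\rho|\varphi|$. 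The two qualitative statements then follow quickly. From $\Lo_{f,\phi}^n|\varphi|=\rho^n|\varphi|$ and nonnegativity of all summands, a zero $|\varphi|(x_0)=0$ would force $|\varphi|$ to vanish on $\bigcup_{n\ge0}f^{-n}\{x_0\}$, which is dense by exactness, so by continuity $|\varphi|\equiv0$; hence $\inf|\varphi|>0$ by compactness and $\varphi$ is bounded away from zero. For one-dimensionality, given another $\psi$ with $\Lo_{f,\phi}\psi=\lambda\psi$, set $\chi:=\psi-\tfrac{\psi(x_0)}{\varphi(x_0)}\varphi$ (allowed since $\varphi$ is nowhere zero); then $\chi\in\ker(\Lo_{f,\phi}|_E-\lambda I)$ with $\chi(x_0)=0$, and if $\chi\neq0$ the part already proved would make $|\chi|$ bounded away from zero, a contradiction, so $\chi\equiv0$ and $\dim\ker(\Lo_{f,\phi}|_E-\lambda I)=1$.

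The main obstacle I anticipate is the spectral-radius identity $\rho_E=\rho_0$, and specifically making the full support of $\nu$ and the nontrivial inequality $\rho_0\le\rho_E$ rigorous, since everything afterwards is driven by this equality. The second delicate point is the $BV[\Sc^1]$ case: its elements are not continuous, so both the ``continuous eigenfunction gives $\rho_E\le\rho_0$'' step and the ``integrate then conclude everywhere'' step must be adapted. There I would instead invoke the spectral-radius estimate of \cite{BK90}, run the argument $\nu$-almost everywhere using that the conjugacy makes $\nu$ non-atomic, and exploit that a $BV$ function agrees with a continuous representative off a countable set, so the dense-preimage vanishing argument and the simplicity argument still go through.
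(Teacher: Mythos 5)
Your argument for the H\"older and $C^r$ cases is correct and shares the paper's skeleton: the $C^0$-dual eigenmeasure $\nu$ from Mazur's theorem, its full support via topological exactness, and the dense-preimage argument for $\inf|\varphi|>0$ and for $\dim\ker(\Lo_{f,\phi}|_E-\lambda I)=1$ are all the same. The middle step is handled differently. The paper introduces the phase $s=\varphi/|\varphi|$, verifies the operator identity $\Lo_{f,\phi}\bigl(\tfrac{s}{\lambda\,(s\circ f)}|\varphi|\bigr)=|\varphi|$, integrates it against $\nu$, and lets the modulus-one factor force simultaneously $\rho(\Lo_{f,\phi}|_{C^0})=\rho(\Lo_{f,\phi}|_E)$ and the cocycle relation $s=\xi\,(s\circ f)$ holding $|\varphi|d\nu$-a.e., from which $\Lo_{f,\phi}|\varphi|=\rho|\varphi|$ follows. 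You instead obtain the radius identity from two soft inequalities ($\lambda$ is already a $C^0$ eigenvalue, and $\|\Lo^n_{f,\phi}\mathbf{1}\|_\infty\le C\|\Lo^n_{f,\phi}\mathbf{1}\|_E$ gives the converse bound), and then get $\Lo_{f,\phi}|\varphi|=\rho|\varphi|$ by integrating the nonnegative continuous defect $\Lo_{f,\phi}|\varphi|-\rho|\varphi|$ against the fully supported $\nu$. This is cleaner and fully adequate for the lemma as stated. The one thing it does not hand you for free is the relation $s=\xi\,(s\circ f)$, which the paper reuses verbatim in the proof of Proposition~\ref{LemaEss}; note, however, that once your inequality $(\ast)$ is promoted to a pointwise equality, the equality case of the triangle inequality at each $x$ aligns all the phases $s(y)$, $f(y)=x$, with $\xi s(x)$, so the relation is recoverable from your argument with one extra line.

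The genuine soft spot is $E=BV[\Sc^1]$. Your fallback rests on the claim that a $BV$ function agrees with a continuous representative off a countable set; that is false (a step function already fails it), so neither the ``eigenvalue of $\Lo_{f,\phi}|_{C^0}$'' step, nor the passage from ``$\nu$-a.e.'' to ``everywhere,'' nor the dense-preimage vanishing argument transfers as written. What is true is that a $BV$ function has one-sided limits everywhere and at most countably many discontinuities; the standard repair is to work with a regularized representative and the machinery of \cite[Proposition~3.5]{Ba00}, which is precisely what the paper invokes for the $BV$ case in Proposition~\ref{LemaEss}. To be fair, the paper's own write-up of Lemma~\ref{Lemaxi} also argues as if $\varphi$ were continuous, so this is a gap you share with the source rather than one you introduced, but it should be closed explicitly rather than waved at.
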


\begin{proof}
Take $\xi \in \C$ such that $\lambda = \rho(\Lo_{f,\phi}|_E) \xi$ and $|\xi| = 1$.

By Mazur's separation theorem, there exists a probability $\nu$ with $$(\Lo_{\phi|C^{0}})^\ast\nu=\rho(\Lo_{\phi|C^{0}})\nu.$$ Therefore $\rho(\Lo_\phi|_{C^0})\leq \rho(\Lo_\phi|_{E})$.\\

\textit{Claim: supp $\nu=\Sc^1$, where supp $\nu$ means the support of the measure $\nu$.}\\

\noindent Indeed, let $A\subset \Sc^1$ be an open set. Then 

$$\nu(A)=\int \chi_A d\nu=\int \dfrac{\Lo_\phi^n(\chi_A)}{\rho(\Lo_\phi|_{C^0})^n}d\nu$$ for every $n\in \N$. By topological exactness, take $N>0$ such that $f^N(A)=\Sc^1$. Thus given $x \in \Sc^1$ we can find $y_0 \in f^{-N}(x)\cap A$, and $$\Lo_\phi^N\chi_A(x)=\sum_{f^Ny=x} e^{S_N\phi(y)} \chi_A(y) \geq e^{S_N\phi(y_0)}\chi_A(y_0)=e^{S_N\phi(y_0)}\geq e^{N\inf\phi}.$$ Finally
$$\nu(A)\geq \dfrac{e^{N\inf\phi}}{\rho(\Lo_\phi|_{C^0})^N}>0.$$ 
Which proves the claim.

\

Now define $$s(x):=\begin{cases}
\varphi(x)/|\varphi(x)|, \text{if } \varphi(x) \neq 0 \\
1,\;\; \text{if } \varphi(x)=0.
\end{cases}$$
 Note that $s|\varphi|=\varphi$, thus $$\Lo_{\phi}(s|\varphi|)=\Lo_{\phi}\varphi=\lambda \varphi = \lambda s|\varphi|.$$ This implies that

$$
    \Lo_{\phi}\left(\dfrac{s}{\lambda (s \circ f) }\cdot |\varphi|\right)(x) = \sum_{f(y)=x} e^{\phi(y)} \dfrac{s(y)}{\lambda s(x)}|\varphi|(y) = 
    \dfrac{1}{\lambda s(x)} \Lo_{\phi}(s|\varphi|)(x)= 
    $$
    $$
    \dfrac{1}{\lambda s(x)}\lambda s(x) |\varphi|(x) 
    \Rightarrow \Lo_{\phi}\left(\dfrac{s}{\lambda (s\circ f)}|\varphi|\right)=|\varphi|.
$$
Since $\nu$ is an eigenmeasure:
\begin{align*}
    \int \dfrac{s}{\rho(\Lo_{\phi}|_E) \xi \cdot s \circ f }|\varphi| d\nu \cdot \rho(\Lo|_{C^0})= \int |\varphi| d\nu.
\end{align*}
Hence 
    \begin{align*}
    \dfrac{\rho(\Lo_\phi|_{C^0})}{\rho(\Lo_\phi|_{E})}\int \dfrac{s}{\xi\cdot s \circ f }|\varphi| d\nu  = \int |\varphi| d\nu.
\end{align*}
Since $\left|\dfrac{s}{\xi \cdot s\circ f}\right|\equiv 1$ and $|\varphi| \geq 0$, we must have $\rho(\Lo_\phi|_{C^0})=\rho(\Lo_\phi|_{E})$ and
$s(x)=\xi \cdot s\circ f(x)$, for $|\varphi|$d$\nu$-a.e. $x$. Then
$s(x)|\varphi|(x) = \xi \cdot s\circ f(x) |\varphi|(x)$ for $\nu$-a.e. $x$, that is, $\varphi= \xi \cdot s\circ f \cdot |\varphi|$, since $\text{supp }\nu=\Sc^1$. Now, applying the transfer operator, we have:
\begin{align*}
    \Lo_\phi(|\varphi|)=\Lo_\phi\left(\dfrac{\varphi}{\xi\cdot s\circ f}\right)
    =\dfrac{\Lo_\phi(\varphi)}{\xi\cdot s}= \rho(\Lo_\phi|_{E}) \dfrac{\varphi}{s}=\rho(\Lo_\phi|_{E})|\varphi|.\\
\end{align*}

Next we prove that $\varphi$ is bounded away from zero.
Indeed, let $x \in \Sc^1$  be such that $|\varphi|(x)=0$. Since
$$\sum_{f(y)=x}e^{\phi(y)}|\varphi|(y)=\rho(\Lo_{\phi|E})|\varphi|(x)=0,$$
we must have $\varphi(y)=0$ for each pre-image $y$ of $x$, and arguing by induction all pre-orbits $\{f^{-n}(x) : n \in \N\}$ of $x$ must be a zero of $|\varphi|$. Since the set of pre-orbits of $x$ is dense by topological exactness, we would have $|\varphi|\equiv 0$, which is a contradiction. 
Since $\varphi \in E$, and $\varphi$ is bounded away from zero, $|\varphi|\in E$.

Finally we prove that the eigenspace associated with $\lambda$ is unidimensional.
Let $\varphi_1, \varphi_2$ be eigenfunctions of $\lambda$. Note that there exists $t \in \C$ and $x_0\in \Sc^1$ such that $\varphi_1(x_0)+t\varphi_2(x_0)=0$. Since
$\Lo_{f,\phi}(\varphi_1+t \varphi_2) = \lambda(\varphi_1+t \varphi_2)$, and
by the boundness from zero of non-zero eigenfunctions, we conclude that $\varphi_1+t \varphi_2 \equiv 0$.
\end{proof}

\begin{corollary}\label{medida}
Let $f:\Sc^1 \rightarrow \Sc^1$ be a transitive $C^r-$local diffeomorphism, let $E= BV[\Sc^{1}], C^{\alpha}(\Sc^1,\C)$ or $C^{r}(\Sc^1,\C)$, and let $\phi \in E$ be a continuous real function. If $\mathcal{L}_{f, \phi|E}$ has the spectral gap property then there exists a unique probability $\nu_{\phi}$ on $\Sc^{1}$ such that $ (\mathcal{L}_{f, \phi|E})^{\ast}\nu_{\phi} = \rho(\mathcal{L}_{f, \phi|E})\nu_{\phi}$.
\end{corollary}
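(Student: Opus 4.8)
The plan is to read off existence from the construction already in place in the preliminaries, and to reduce uniqueness to the one-dimensionality of the leading eigenspace of the adjoint operator. Throughout write $T := \mathcal{L}_{f,\phi}|_E$ and $\rho := \rho(\mathcal{L}_{f,\phi}|_E)$.

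For existence I would start from Mazur's separation theorem, which already furnishes a probability $\nu_\phi$ on $\Sc^1$ with $(\mathcal{L}_{f,\phi}|_{C^0})^\ast\nu_\phi = \rho(\mathcal{L}_{f,\phi}|_{C^0})\,\nu_\phi$. The spectral gap hypothesis together with Lemma \ref{Lemaxi} gives $\rho(\mathcal{L}_{f,\phi}|_{C^0}) = \rho$, and since $E$ is continuously immersed in $C^0(\Sc^1,\C)$ I would simply restrict the identity $\int \mathcal{L}_{f,\phi} g\, d\nu_\phi = \rho \int g\, d\nu_\phi$ from $g \in C^0$ to $g \in E$ to conclude that $\nu_\phi$, viewed as an element of $E^\ast$, satisfies $T^\ast\nu_\phi = \rho\,\nu_\phi$.

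For uniqueness the key is that under the spectral gap property the leading spectral projection of $T$ is the rank-one operator $P_\rho(g) = \big(\int g\, d\nu_\phi\big)h_\phi$ recorded before Corollary \ref{analy}. I would compute its adjoint, $P_\rho^\ast(\ell) = \ell(h_\phi)\,\nu_\phi$, which is again rank one with range $\C\,\nu_\phi$; this adjoint is exactly the spectral projection of $T^\ast$ attached to the isolated eigenvalue $\rho$. A standard argument with the complementary spectral projection (onto the part of the spectrum inside $\{|z| < \lambda_0\}$, where $T^\ast - \rho I$ is invertible) then shows $\ker(T^\ast - \rho I) = \mathrm{range}(P_\rho^\ast) = \C\,\nu_\phi$. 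Hence any probability $\tilde\nu$ with $T^\ast\tilde\nu = \rho\,\tilde\nu$ must agree, as a functional on $E$, with a scalar multiple $c\,\nu_\phi$.

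The last — and only slightly delicate — step is to promote the equality $\tilde\nu = c\,\nu_\phi$ from functionals on $E$ to an equality of measures. Here I would use that $E$ contains the polynomials (resp. the smooth functions), hence is uniformly dense in $C^0(\Sc^1,\C)$; continuity of the two finite measures as functionals on $C^0$ then forces $\tilde\nu = c\,\nu_\phi$ on all of $C^0$, that is, as measures. Testing against the constant function $\um$ gives $c = 1$, so $\tilde\nu = \nu_\phi$. The main obstacle is precisely this passage from $E$ to $C^0$, for which the density of $E$ in $C^0$ is indispensable; the remaining inputs are routine spectral theory for isolated simple eigenvalues.
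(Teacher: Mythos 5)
Your proof is correct. The existence half coincides with the paper's: both take the eigenmeasure for $(\mathcal{L}_{f,\phi}|_{C^0})^{\ast}$ supplied by Mazur's separation theorem and then use the identity $\rho(\mathcal{L}_{f,\phi}|_{C^0}) = \rho(\mathcal{L}_{f,\phi}|_{E})$ from Lemma \ref{Lemaxi} to view it as an eigenfunctional of $(\mathcal{L}_{f,\phi}|_{E})^{\ast}$ for the eigenvalue $\rho(\mathcal{L}_{f,\phi}|_{E})$. For uniqueness the two arguments extract the same information from the rank-one leading projection by different means. The paper tests the convergence $\mathcal{L}^{n}_{\phi}\varphi/\rho^{n} \to \nu_{\phi}(\varphi)\,h_{\phi}$ (valid in $E$-norm, hence uniformly, by the spectral gap) against a second eigenprobability $\eta$, obtaining $\eta(\varphi) = \nu_{\phi}(\varphi)\,\eta(h_{\phi})$ and, from $\varphi = 1$, $\eta(h_{\phi}) = 1$; you instead identify $\ker\bigl((\mathcal{L}_{f,\phi}|_{E})^{\ast} - \rho I\bigr)$ with the range of the adjoint projection $P_{\rho}^{\ast}(\ell) = \ell(h_{\phi})\,\nu_{\phi}$ via the standard invertibility argument on the complementary spectral subspace. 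Both routes are routine once the leading projection is known to be $g \mapsto \bigl(\int g\, d\nu_{\phi}\bigr)h_{\phi}$; yours is slightly more abstract, the paper's slightly more dynamical. One point in your favour: you make explicit the final passage from equality of functionals on $E$ to equality of Borel measures, using that $E$ contains the smooth functions and is therefore uniformly dense in $C^{0}(\Sc^{1},\C)$ --- a step the paper's proof uses silently when it concludes $\eta = \nu$ from $\eta(\varphi) = \nu(\varphi)$ for $\varphi \in E$.
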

\begin{proof}
Note that $\mathcal{L}_{\phi}$ is a continuous operator on $C^{0}(\Sc^{1} , \C)$. Thus, the probability $\nu$ used in the proof of the previous lemma exists and $(\mathcal{L}_{\phi|E})^{\ast}\nu = \rho(\mathcal{L}_{\phi|C^{0}})\nu$. The previous lemma assures us that $\rho(\mathcal{L}_{\phi|C^{0}}) = \rho(\mathcal{L}_{ \phi|E})$.

Finally let us show the uniqueness. Let $\eta$ be probability on $\Sc^{1}$ such that $(\mathcal{L}_{ \phi|E})^{\ast}\eta = \rho(\mathcal{L}_{ \phi|E})\eta$. The spectral gap property implies that given $\varphi \in E$ then for some scalar $z$: $$\lim_{n \mapsto \infty}\frac{\Lo^{n}_{\phi}\varphi}{\rho(\mathcal{L}_{ \phi|E})^{n}} = z \cdot \lim_{n \mapsto \infty}\frac{\Lo^{n}_{\phi}1}{\rho(\mathcal{L}_{ \phi|E})^{n}}\Rightarrow \eta(\varphi) = z = \nu(\varphi) \Rightarrow \eta = \nu.$$
\end{proof}

\begin{remark}
Given $\Lo_{f,\phi|E}$ with the spectral gap property, by previous lemma there exists an unique $h_{\phi} \in E$ such that $\int h_{\phi} d\nu_{\phi} = 1$. Moreover, denote the $f-$invariant probability $h_{\phi}d\nu_{\phi}$ by $\mu_{\phi}$. Note that supp $\nu_{\phi} = \Sc^{1}$ and $h_{\phi} > 0$, by previous lemma, thus supp $\mu_{\phi} = \Sc^{1}.$
\end{remark}

For the next lemma we follow the proof for expanding maps whose main part is Rohklin's formula (see eg \cite{OV16}). For convenience of the reader we will present the highlights of the proof.

\begin{lemma}\label{Lemapress}
Let $f:\Sc^1 \rightarrow \Sc^1$ be a transitive $C^r-$local diffeomorphism, let $E=C^{\alpha}(\Sc^1,\C)$ or $C^{r}(\Sc^1,\C)$  and let $\phi \in E$ be a real function. If $\Lo_{f,\phi|E}$ has the spectral gap property, then $P_{top}(f,\phi)=\log \rho(\Lo_{f,\phi}|_E)$ and the probability $\mu_\phi$ is an equilibrium state for $f$ with respect to $\phi$.
\end{lemma}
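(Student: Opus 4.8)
The plan is to follow the classical argument for expanding maps from \cite{OV16}, whose engine is Rokhlin's formula (Theorem \ref{rokf}); it applies here because the domains of the full branches form a generating partition into invertibility domains. Write $\rho := \rho(\Lo_{f,\phi}|_E)$. First I would identify the Jacobian of the eigenmeasure $\nu_\phi$. By Corollary \ref{medida} together with the equality $\rho(\Lo_{f,\phi}|_{C^0}) = \rho$ from Lemma \ref{Lemaxi}, the measure $\nu_\phi$ satisfies $(\Lo_{f,\phi})^*\nu_\phi = \rho\,\nu_\phi$. Testing this duality against functions $\chi_A e^{-\phi}$, where $A$ is contained in a single invertibility domain, and using the identity $\Lo_{f,\phi}(\chi_A e^{-\phi}) = \chi_{f(A)}$, gives $\nu_\phi(f(A)) = \int_A \rho e^{-\phi}\,d\nu_\phi$; that is, $J_{\nu_\phi}(f) = \rho\, e^{-\phi}$. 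Since $\mu_\phi = h_\phi\,\nu_\phi$ with $h_\phi$ continuous and bounded away from zero (Lemma \ref{Lemaxi}), the change-of-density rule for Jacobians yields
$$
J_{\mu_\phi}(f) = \frac{h_\phi\circ f}{h_\phi}\,\rho\, e^{-\phi}.
$$

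For the lower bound I would insert this into Rokhlin's formula. As $\mu_\phi$ is $f$-invariant, the terms $\int\log(h_\phi\circ f)\,d\mu_\phi$ and $\int\log h_\phi\,d\mu_\phi$ coincide and cancel, leaving
$$
h_{\mu_\phi}(f) = \int\log J_{\mu_\phi}(f)\,d\mu_\phi = \log\rho - \int\phi\,d\mu_\phi,
$$
so $h_{\mu_\phi}(f) + \int\phi\,d\mu_\phi = \log\rho$. By the Variational Principle this shows $P_{top}(f,\phi)\ge\log\rho$ and exhibits $\mu_\phi$ as a candidate equilibrium state.

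The remaining, and most delicate, step is the matching upper bound $P_{top}(f,\phi)\le\log\rho$. For an arbitrary invariant $\mu$, Rokhlin's formula gives $h_\mu(f)=\int\log J_\mu(f)\,d\mu$, and introducing the normalized potential $\tilde\phi := \phi + \log h_\phi - \log(h_\phi\circ f) - \log\rho$ (which satisfies $\sum_{f(y)=x}e^{\tilde\phi(y)}=1$ because $\Lo_{f,\phi}h_\phi=\rho h_\phi$) reduces, after cancelling the $h_\phi$ telescoping terms by invariance, the quantity $h_\mu(f)+\int\phi\,d\mu-\log\rho$ to $\int\log\big(J_\mu(f)\,e^{\tilde\phi}\big)\,d\mu$. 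Writing $p(y):=1/J_\mu(f)(y)$ and using the fiberwise decomposition $\int\psi\,d\mu = \int\big(\sum_{f(y)=x} p(y)\,\psi(y)\big)\,d\mu(x)$ together with the invariant-Jacobian identity $\sum_{f(y)=x}p(y)=1$, concavity of $\log$ gives, for $\mu$-a.e. $x$,
$$
\sum_{f(y)=x} p(y)\,\log\frac{e^{\tilde\phi(y)}}{p(y)} \;\le\; \log\sum_{f(y)=x} e^{\tilde\phi(y)} \;=\; 0 .
$$
Integrating against $\mu$ yields $h_\mu(f)+\int\phi\,d\mu\le\log\rho$, and taking the supremum gives $P_{top}(f,\phi)\le\log\rho$. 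Combined with the lower bound, $P_{top}(f,\phi)=\log\rho$ and $\mu_\phi$ attains the supremum, hence is an equilibrium state. The main obstacle is precisely this upper bound: one must check that $\log J_\mu(f)\in L^1(\mu)$ for every invariant $\mu$ — which holds since $p(y)\le 1$ forces $J_\mu(f)\ge 1$, so $0\le\int\log J_\mu(f)\,d\mu = h_\mu(f)\le h_{top}(f)=\log\deg(f)<\infty$ — and that the fiberwise Jensen estimate is applied on a full-measure set of fibers.
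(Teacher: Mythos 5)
Your proposal is correct and follows essentially the same route as the paper: compute the Jacobian $J_{\mu_\phi}(f)=\rho\,e^{-\phi}\,\frac{h_\phi\circ f}{h_\phi}$, apply Rokhlin's formula for the lower bound, and use the normalized potential together with fiberwise concavity of $\log$ for the upper bound. The only cosmetic difference is that you derive the Jacobian identity directly from the eigenmeasure duality, where the paper cites \cite[Corollary 4.2.7]{PU10}, and you make the integrability of $\log J_\mu(f)$ explicit.
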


\begin{proof}
From Lemma \ref{Lemaxi} the transfer operator $\Lo_\phi$ has  bounded away from 0 eigenfunction $h_\phi \in E$ for the eigenvalue $\lambda_\phi=\rho(\Lo_{\phi}|_E)$ :
                   $$\Lo_\phi(h_\phi)=\lambda_\phi h_\phi,$$
    and the dual operator $\Lo^*_\phi$ of $\Lo_\phi$ has an eigenmeasure $\nu_\phi$ for the eigenvalue $\lambda_\phi$, in particular
    $$\int \Lo_\phi(g)d\nu_\phi = \lambda_\phi \int g d\nu_\phi \; \forall g \in E.$$

Furthermore $\mu_\phi=h_\phi\nu_\phi$ defines an invariant probability.
Note that 
$$\mathcal{L}_{\phi +\log h_{\phi} - \log (h_{\phi} \circ f)}^{\ast}\mu_{\phi} = \lambda_{\phi}\mu_{\phi},$$ by  \cite[Corollary 4.2.7]{PU10} it follows that $f$ admits jacobians relatively to $\mu_\phi$ given by 
$$J_{\mu_\phi}f=\lambda_{\phi} \frac{e^{-\phi}h_{\phi}\circ f}{h_{\phi}}.$$

See that $f$ admits generating partition, then Rokhlin's formula (see Theorem \ref{rokf}) holds and we get:
$$h_{\mu_\phi}(f)=\int \log J_{\mu_{\phi}}f d\mu_\phi=\log\lambda_\phi - \int \phi d\mu_\phi+\int(\log h_{\phi}\circ f-\log h_{\phi}) d\mu_\phi.$$
Since $\mu_\phi$ is invariant and $\log h_\phi$ is bounded ($h_\phi$ is bounded away from $0$), the last term is zero and thus 
$$
h_{\mu_\phi}(f) + \int \phi d\mu_\phi = \log\lambda_\phi. 
$$

Now we prove that $\mu_\phi$ is an equilibrium state. Let $\eta$ be an $f-$invariant probability satisfying
$$h_\eta(f)+\int \phi d\eta \geq \log\lambda_\phi.$$
Let $g_\eta:=\frac{1}{J_\eta f}$ be,  and let $g_{\mu_{\phi}}:=\frac{1}{J_{\mu_{\phi}} f}$ be. Note that

$$ \sum_{y \in f^{-1}(x)}g_{\mu_{\phi}}(y)=\dfrac{1}{\lambda_{\phi} h_{\phi}(x)} \sum_{y \in f^{-1}(x)}e^{\phi(y)}h(y)=\dfrac{\Lo_{f,\phi} h_{\phi}(x)}{\lambda_{\phi} h_{\phi}(x)}=1$$ 

for all $x \in \Sc^{1}$. Also, since $\eta$ is invariant by $f$ 
$$
\sum_{y \in f^{-1}(x)} g_\eta (y)=1 \text{ for } \eta-\text{a.e.} x.
$$
By Rohklin's formula,
$$0 \leq h_\eta(f)+\int \phi d\eta - \text{log}\lambda_{\phi} = \int(-\log g_\eta + \phi - \log \lambda_{\phi}) d\eta. $$
Since $\eta$ is invariant and by definition of $g_{\mu_{\phi}}$, the integral above equals

$$\int(-\log g_\eta + \log g_{\mu_{\phi}} + \log h_{\phi}\circ f - \log h_{\phi}) d\eta=\int \log \dfrac{g_{\mu_{\phi}}}{g_\eta} d\eta $$

Then, by definition of $g_\eta$

$$\int\log\dfrac{g_{\mu_{\phi}}}{g_\eta}d\eta=\int(\sum_{y \in f^{-1}(x)}g_\eta(y) \log \dfrac{g_{\mu_{\phi}}}{g_\eta}(y))d\eta(x)$$

Since $t \mapsto \log t$ is concave, we have:

$$
    \sum_{y \in f^{-1}(x)} g_\eta(y) \log\dfrac{g_{\mu_{\phi}}}{g_\eta}(y) \leq \log \sum_{y \in f^{-1}(x)} g_\eta(y) \dfrac{g_{\mu_{\phi}}}{g_\eta}(y) 
    = \log \sum_{y \in f^{-1}(x)}g_{\mu_{\phi}}(y)=0
$$

for $\eta$-a.e. $x$. Finally we get 
$$ h_\eta(f) + \int \phi d\eta - \log\lambda_{\phi} = \int \dfrac{g_{\mu_{\phi}}}{g_\eta} d_\eta = 0.$$

Thereby $\mu_\phi$ is an equilibrium state for $f$ with respect to $\phi$ and from the Variational Principle $$P(f,\phi)=\log \rho(\Lo_\phi).$$
\end{proof}

\begin{proof}[Proof of the Proposition \ref{prop1}]
If $\Lo_{f,-t_{0}\log|Df|}|_E$ had the spectral gap property on $(0,1]$, then by Lemma \ref{Lemapress} and Corollary \ref{analy}, the topological pressure function 
$$(-\varepsilon + t_{0}, t_{0} +\epsilon) \ni t \mapsto P_{top}(f,-t\log|Df|) = \log(\rho(\Lo_{f,-t\log|Df|}|_E))$$ would be analytical, contradicting  Theorem \ref{mainthA}.
\end{proof}

\

\subsubsection{Lack of spectral gap after transition}

\

\begin{proposition}\label{propat}
Let $E = BV[\Sc^{1}], C^{\alpha}(\Sc^{1} , \C)$ or $C^{r}(\Sc^{1} , \C)$  and let $f:\Sc^1 \to \Sc^1$ be a transitive $C^{1}-$local diffeomorphism not invertible, with $Df \in E$. Suppose also that $f$ is not an expanding dynamics. If $\Lo_{f,-s\log |Df|}$ has the spectral gap property on $E$, for some $s \in \R$, then the topological pressure function $t \mapsto P(t)$ is strictly convex in a neighborhood of $s$. Moreover,
$\Lo_{f , -t\log |Df|}|_{E}$ has not spectral gap property for all $t \geq  t_0$.
\end{proposition}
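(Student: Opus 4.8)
The plan is to show that the existence of a spectral gap at a parameter $s$ forces the pressure function $P$ to be strictly convex near $s$ (via Nagaev's perturbation method), and then to contradict this with the fact that $P$ is identically zero, hence not strictly convex, on $[t_0,+\infty)$.

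First I would assume that $\Lo_{f,-s\log|Df|}$ has the spectral gap property on $E$. By Corollary~\ref{analy} the spectral gap property is open and the leading eigenvalue is analytic, so the gap persists for all $t$ in a neighborhood $(s-\varepsilon,s+\varepsilon)$ and $\lambda(t):=\rho(\Lo_{f,-t\log|Df|}|_E)$ depends analytically on $t$ there. By Lemma~\ref{Lemapress} one has $P(t)=\log\lambda(t)$ throughout this neighborhood, so it suffices to prove $P''(s)>0$. To this end I would run Nagaev's scheme: writing $\phi_t=-t\log|Df|=\phi_s+(s-t)\log|Df|$, I regard $z\mapsto \Lo_{f,\phi_s+z\log|Df|}$ as an analytic family of operators (by the analyticity of $\phi\mapsto\Lo_{f,\phi}$ stated before Corollary~\ref{analy}) and let $\Lambda(z):=\log\rho(\Lo_{f,\phi_s+z\log|Df|}|_E)$ be the analytic branch of the log leading eigenvalue, so that $P(s-z)=\Lambda(z)$ for small real $z$. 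The standard first- and second-order perturbation formulas for a simple isolated eigenvalue give $\Lambda'(0)=\int\log|Df|\,d\mu_s=\chi_{\mu_s}(f)$ and $\Lambda''(0)=\sigma^2$, where
$$
\sigma^2=\lim_{n\to\infty}\frac{1}{n}\int\Big(\sum_{j=0}^{n-1}\log|Df(f^j(x))|-n\,\chi_{\mu_s}(f)\Big)^{2}d\mu_s(x)
$$
is the asymptotic variance of $\log|Df|$ in the Central Limit Theorem associated to the equilibrium state $\mu_s=h_s\,\nu_s$. Consequently $P''(s)=\Lambda''(0)=\sigma^2\geq 0$.

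The heart of the argument is then to show $\sigma^2>0$. The variance vanishes precisely when $\log|Df|$ is cohomologous to the constant $\chi_{\mu_s}(f)$, i.e. $\log|Df|=\chi_{\mu_s}(f)+u\circ f-u$ for some continuous $u$; were this so, every $f$-invariant probability would have Lyapunov exponent equal to $\chi_{\mu_s}(f)$. But this is impossible under our hypotheses: by \cite[Corollary 1.2]{A20} every ergodic measure has non-negative exponent, while the non-expanding hypothesis together with \cite[Theorem 2]{CLR03} yields an ergodic measure with non-positive exponent, so there is an ergodic measure with exponent exactly $0$; on the other hand, since $h_{top}(f)>0$, the measure of maximal entropy has positive entropy and hence, by the Ruelle--Margulis inequality, strictly positive exponent. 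These two exponents cannot coincide, so $\log|Df|$ is not a coboundary plus constant, giving $\sigma^2>0$ and therefore $P''(s)>0$ and strict convexity of $P$ on a neighborhood of $s$.

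Finally I would combine this with the constancy of the pressure after the transition: in the transitive case $P(t)=0$ for all $t\geq t_0$. A function identically zero on an interval is not strictly convex on any subinterval, so for any $s\geq t_0$ every neighborhood of $s$ contains a nondegenerate interval on which $P$ is constant, whence $P$ cannot be strictly convex near $s$. By the first part this rules out the spectral gap property at such $s$, so $\Lo_{f,-t\log|Df|}|_E$ fails to have the spectral gap property for all $t\geq t_0$. The step I expect to be the main obstacle is the rigorous implementation of the Nagaev scheme identifying $P''(s)$ with the variance $\sigma^2$ (controlling the perturbed eigenfunctions and spectral projections in the Banach space $E$), together with the positivity $\sigma^2>0$; the latter is precisely where the dynamical rigidity of non-expanding transitive maps — the coexistence of invariant measures with distinct Lyapunov exponents — is genuinely used.
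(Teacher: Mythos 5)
Your proposal is correct and follows essentially the same route as the paper: Nagaev's perturbation method identifying $P''(s)$ with the asymptotic variance $\sigma^2$ of $\log|Df|$ for the full-support equilibrium state $\mu_s$, positivity of $\sigma^2$ via the cohomological rigidity argument, and the conclusion from the constancy of $P$ on $[t_0,+\infty)$. The only (minor) difference is in ruling out $\sigma^2=0$: the paper deduces that the unique Lyapunov exponent $c$ would be positive and hence $f$ expanding by \cite{CLR03}, contradicting the hypothesis, whereas you exhibit two ergodic measures with distinct exponents (one zero, one positive); both variants are valid.
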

\begin{proof}
Suppose that $\Lo_{f,-s\log |Df|}$ has the spectral gap property on $E$, for some $s \in \R$. Then again, by Corollary \ref{analy}, spectral gap holds for any small perturbation of this operator and thus, since the operator varies analytically with the potential, for any small perturbation of the potential. Let $\lambda_{\phi}$ be denoting $\rho(\Lo_{f,\phi|E})$ and fix $\psi \in E$. Then the function $T(t):=\dfrac{\lambda_{-s\log |Df| +i t \psi}}{\lambda_{-s\log |Df|}}$ is well defined for $t \in \R$ in a small neighborhood of zero and is analytical. By Nagaev's method we have $\sigma^2_{f,-s\log |Df|}(\psi)=-D_t^2 T(t)|_{t=0}$  where $\sigma^2:=\sigma^2_{f,-s\log |Df|}(\psi)$ is the variance of the \textit{Central Limit Theorem} with respect to dynamics $f$, probability $\mu_{-s\log |Df|}$ and observable $\psi$. Moreover $\sigma = 0$ if, only if,  there exists $c \in \R$ and $u \in E$ such that 
$$\psi(x)= c + u\circ f(x) - u(x) , \text{ for } \mu_{-s\log |Df|}-\text{a.e. } x,$$
(see e.g. \cite[Lecture 4]{S12} for more details). Note that
$$
\frac{d \, \lambda_{-s\log |Df| +i t \psi}}{dt} = i\frac{d \, \lambda_{\phi}}{d\phi}|_{\phi = -s\log |Df| +i t \psi} \cdot \psi \Rightarrow 
$$
$$
\frac{d^2 T(t)}{dt^{2}}|_{t=0} = - \frac{ \frac{d^{2}\,\lambda_{\phi}}{d^{2} \phi}|_{\phi = -s\log |Df| } \cdot (\psi ,  \psi) }{\lambda_{-s\log |Df|} } = - \frac{ \frac{d^{2}\,\lambda_{-s\log |Df|  + t \psi}}{d^{2} t}|_{t = 0} }{\lambda_{-s\log |Df|} }.
$$
In particular, we wil take $\psi = -\log |Df| + \int \log |Df| d\mu_{-s\log|Df|}$.

 Define $G(t) := P_{top}( f , -s\log|Df| + t\psi)$. Note that 
 $$G(t) = P(t + s) + t\int \log |Df| d\mu_{-s\log|Df|},$$
  for all $t \in \R$. Furthermore, for $t \in \R$ close enough to zero we have that $\mathcal{L}_{f , -s\log|Df| + t\psi}$ has the spectral gap property and thus $G(t) = \log \lambda_{-s\log|Df| + t\psi}$, by Lemma \ref{Lemapress}. Hence:
 
$$
\frac{d^{2}\,\lambda_{-s\log |Df|  + t \psi}}{d^{2} t } = e^{G(t)}\Big(  \big(G'(t)\big)^{2} + G''(t) \Big) \Rightarrow
$$
$$
\sigma^{2} = \sigma^2_{f,-s\log |Df|}(\psi)= \frac{ \frac{d^{2}\,\lambda_{-s\log |Df|  + t \psi}}{d^{2} t}|_{t = 0} }{\lambda_{-s\log |Df|} } = \big(G'(0)\big)^{2} + G''(0).
$$
It follows from \cite{W92} that $G'(0) = \int \psi d\mu_{-s\log |Df|} = 0$, we conclude then that $\sigma^{2} = G''(0)$.

Suppose by contradiction that $G''(0) = 0$. Hence, by Nagaev's method, there exists $c \in \R$ and $u \in E$ such that 

$$\log|Df(x)|= c + u\circ f(x) - u(x) , \text{ for } \mu_{-s\log |Df|}-\text{a.e. } x,$$
where $\mu_{-s\log |Df|}$ is the equilibrium state obtained via the spectral gap property, which we know has full support. Therefore $\log|Df|\equiv c + u\circ f - u$, and thus the Birkhoff time average  $\dfrac{1}{n} S_n \log|Df|$ converges uniformly to $c$. Consequently, $c$ is the unique Lyapunov exponent of $f$. Since $h_{top}(f) > 0$ then $c >0$, by Ruelle-Margulis inequality (see \cite{Rue78}). Thus $f$ would be an expanding dynamics, by \cite{CLR03}. This contradicts the hypotheses.

Therefore $P''(s) = G''(0) > 0$, which implies that $P$ is strictly convex in a neighborhood of $s$.

Lastly, by Theorem \ref{mainthA} the topological pressure function $t \to P(t)$ is constant in $[t_{0} , +\infty)$. Then $\Lo_{f , -t\log |Df|}|_{E}$ has not spectral gap property for all $t \geq  t_0$.
\end{proof}

\

\subsubsection{Spectral gap before of transition}

First we prove, for transitive local diffeomorphism in the circle, that if the transfer operator is \text{quasi-compact} then it has the spectral gap property.

\begin{proposition}\label{LemaEss}
Let $E = BV[\Sc^{1}], C^{\alpha}(\Sc^{1} , \C)$ or $C^{r}(\Sc^{1} , \C)$, let $f:\Sc^1 \to \Sc^1$ be a transitive $C^{r}-$local diffeomorphism and let  $\phi \in E$ be a real function. If $\Lo_{f,\phi}|_E$ is quasi-compact, then it  has the spectral gap property.
\end{proposition}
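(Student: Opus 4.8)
The plan is to deduce the strong spectral gap property directly from quasi-compactness, using Lemma \ref{Lemaxi} to control the peripheral spectrum. Write $T := \Lo_{f,\phi}|_E$ and $\rho := \rho(T)$. Since $T$ is quasi-compact, $\rho_{ess}(T) < \rho$, so $\rho$ lies in $sp(T)\setminus\overline{B(0,r)}$ for some $r\in(\rho_{ess}(T),\rho)$ and is therefore an eigenvalue of finite multiplicity. Applying Lemma \ref{Lemaxi} with $\lambda=\rho$ gives an eigenfunction $h\in E$ that is positive and bounded away from $0$, with $\Lo_{f,\phi}h=\rho h$, with $\dim\ker(T-\rho)=1$, and with $\rho(\Lo_{f,\phi}|_{C^0})=\rho$. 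As in the proof of that lemma, Mazur's theorem furnishes a probability $\nu$ with $T^{*}\nu=\rho\nu$ and $\supp\nu=\Sc^1$.

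The heart of the argument is to show that the only eigenvalue of modulus $\rho$ is $\rho$ itself. Let $\lambda=\rho\xi$ with $|\xi|=1$ be any eigenvalue of modulus $\rho$, with eigenfunction $\varphi$. By Lemma \ref{Lemaxi}, $s:=\varphi/|\varphi|\in E$ is unimodular and satisfies $s\circ f=\bar\xi s$, and one checks that $s^{n}h$ is an eigenfunction for the eigenvalue $\rho\xi^{n}$ for every $n\in\N$. Since quasi-compactness forces the peripheral point spectrum to be finite, the set $\{\xi^{n}\}_{n}$ is finite, so $\xi$ is a root of unity of some order $m$. Then $s^{m}$ is $f$-invariant and lies in $E$; transitivity (a dense orbit) together with the existence of one-sided limits for elements of $E$ forces $s^{m}$ to be constant. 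Hence $s$ takes finitely many values, and each level set $A_{v}:=s^{-1}(v)$ satisfies $f^{-1}(A_{v})=A_{\xi v}$. Consequently $f^{-m}(A_{v_{0}})=A_{v_{0}}$ is a proper open set totally invariant under $f^{m}$; but $f$, hence $f^{m}$, is topologically exact, so $f^{mk}(\interior A_{v_{0}})=\Sc^1$ for large $k$, a contradiction unless $m=1$, i.e. $\xi=1$. Thus the peripheral spectrum is $\{\rho\}$ and $\rho$ is geometrically simple.

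To upgrade geometric to algebraic simplicity I would rule out a Jordan block: if $(T-\rho)\psi=c\,h$ for some $\psi\in E$, then pairing with $\nu$ and using $T^{*}\nu=\rho\nu$ gives $0=\nu\big((T-\rho)\psi\big)=c\int h\,d\nu$; since $h>0$ and $\supp\nu=\Sc^1$ we have $\int h\,d\nu>0$, whence $c=0$ and the Riesz projection at $\rho$ has rank one. Finally, quasi-compactness supplies the gap: fixing $r$ with $\rho_{ess}(T)<r<\rho$, the spectrum outside $\overline{B(0,r)}$ consists of finitely many eigenvalues of finite multiplicity, and by the previous step the only one of modulus $\rho$ is $\rho$. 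Their moduli thus admit a maximum $M<\rho$, and any $\lambda_{0}\in(M,\rho)$ yields $sp(T)\setminus\{\rho\}\subset\{z\in\C:|z|<\lambda_{0}\}$ with $0<\lambda_{0}<\rho=\lambda_{1}$, which is precisely the spectral gap property.

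The main obstacle is the peripheral-spectrum step, and more precisely making the invariant-set argument rigorous when $E=BV[\Sc^1]$: there $s$ need not be continuous, so $s\circ f=\bar\xi s$ holds only $\nu$-almost everywhere, and one must pass to good representatives (using that $BV$ functions possess one-sided limits everywhere, so that $s$ is in fact a finite step function) to guarantee that $A_{v_{0}}$ is a genuine proper open set with nonempty interior before invoking topological exactness. In the spaces $C^{\alpha}(\Sc^1,\C)$ and $C^{r}(\Sc^1,\C)$ this difficulty evaporates, since a continuous function with finite range on the connected circle $\Sc^1$ is constant, giving $\xi=1$ at once.
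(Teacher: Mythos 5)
Your architecture coincides with the paper's up to the point where $\xi$ is shown to be a root of unity: both arguments invoke Lemma \ref{Lemaxi} to get the positive eigenfunction and the unimodular factor $s$, both verify that $s^{n}|\varphi|$ (equivalently $s^{n}h$) is an eigenfunction for $\rho\xi^{n}$, and both use quasi-compactness to rule out $\{\xi^{n}\}_{n}$ being infinite. From there you diverge. The paper never tries to prove that $s$ is constant; instead it observes that $|\varphi|-\varphi$ is an eigenfunction of $\Lo_{\phi}^{k}$ for $\rho^{k}$ vanishing at a point where $\varphi>0$, propagates that zero to all $f^{k}$-preimages (the weights $e^{S_{k}\phi}$ are positive, so every summand must vanish), and uses density of the preorbit, via topological exactness, to conclude $\varphi=|\varphi|$ and hence $\xi=1$. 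Your route --- $s^{m}$ is continuous and $f$-invariant, hence constant by transitivity, hence $s$ has finite range on the connected circle and is itself constant --- is a correct and arguably cleaner alternative for $E=C^{\alpha}(\Sc^{1},\C)$ and $E=C^{r}(\Sc^{1},\C)$; your rank-one Riesz projection argument via pairing with $\nu$ is also sound and makes explicit something the paper leaves implicit.

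The genuine gap is exactly the one you flagged: $E=BV[\Sc^{1}]$. There the identity $s=\xi\,s\circ f$ is only available $\nu$-a.e., the transitivity argument for the constancy of $s^{m}$ relies on continuity along a dense orbit, and the level-set argument needs $A_{v_{0}}$ to be a set with nonempty interior; none of this is carried out in your sketch. The paper handles $BV$ differently: it cites \cite[Proposition 3.5]{Ba00} to obtain the eigenfunction identity $\Lo_{\phi}(s^{n}|\varphi|)=\rho\xi^{n}s^{n}|\varphi|$ directly in that space, and then runs the $|\varphi|-\varphi$ propagation argument, which requires only that an eigenfunction vanishing on a dense set be identically zero, not that $s$ be constant. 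If you want to finish your version for $BV$ you must justify that $s$ admits a representative which is a finite step function with open level sets; adopting the paper's propagation-of-zeros ending for that case sidesteps the issue.
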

\begin{proof}

By quasi-compactness of  $\mathcal{L}_{\phi}$ then it admits a peripheral eigenvalue $\xi \rho(\mathcal{L}_{\phi}|_E)$, with $|\xi| = 1$. It follows from Lemma \ref{Lemaxi} that
$\lambda=\rho(\mathcal{L}_{\phi})$ is a simple eigenvalue. It remains to show that it is the only peripheral eigenvalue. 

Following the proof of the Lemma \ref{Lemaxi}, let $\varphi\in E$ be an eigenvector, not null, for $\xi \rho(\mathcal{L}_{\phi}|_E)$,
define $$s(x):=\begin{cases}
\varphi(x)/|\varphi(x)|, \text{if } \varphi(x) \neq 0 \\
1, \text{otherwise},
\end{cases}$$

\

\textit{Claim:  $\mathcal{L}_{\phi }(s^n|\varphi|)=\rho(\mathcal{L}_{\phi}|_E)\xi^n s^n |\varphi|$, for all $n \in \Z$.}

\

\noindent \textit{Case $E=BV[\Sc^1]$:} We know that there exists an eigenmeasure $\nu \in \ker\left( (\Lo_{\phi}|C^{0})^{\ast} - \lambda I \right)$ which has full support, since $f$ is topologically exact (see claim in Lemma \ref{Lemaxi}). Applying \cite[Proposition 3.5]{Ba00} the claim holds.

\

\noindent \textit{Case $E=C^\alpha(\Sc^1, \C)$ or $C^{r}(\Sc^1,\C):$} Following the proof of Lemma \ref{Lemaxi} we  have 
$s(x)=\xi \cdot s\circ f(x)$, for $|\varphi|$d$\nu$-a.e. $x$. Thus $s^n=\xi^n \cdot s^n\circ f$, $|\varphi|$d$\nu$-a.e., which means 
$s^n |\varphi| = \xi^n \cdot s^n\circ f |\varphi|$, $\nu$-a.e.. Since $\text{supp }\nu=\Sc^1$ we have that the functions coincide a.e.. Now applying the transfer operator we have:
$$
    \mathcal{L}_{\phi}(s^n |\varphi|)=\mathcal{L}_{\phi}(s^{n-1}s|\varphi|)=\mathcal{L}_{\phi}(\xi^{n-1}s^{n-1}\circ f \cdot s|\varphi|)=
    $$
    $$
    \xi^{n-1}s^{n-1}\mathcal{L}_{\phi}(s|\varphi|)=\xi^{n-1}s^{n-1}\rho(\mathcal{L}_{\phi})\xi\cdot s|\varphi|=
    \xi^n \rho(\mathcal{L}_{\phi}) s^n |\varphi|. 
$$
and the claim holds.\\

By Lemma \ref{Lemaxi} $\varphi$ is bounded away from zero, thus $s $ and $ |\varphi| \in E$. Thereby $s^n|\varphi| \in E,$  and, by previous claim, it follows that
$\xi^n \cdot \rho(\Lo_{\phi}|_E) \in sp(\Lo_{\phi})$ for all $n\in \Z.$

Since the set $\{\xi^n: n \in \Z\}$ forms a subgroup of the circle then it is either dense or periodic. However by quasi-compactness of the transfer operator all eigenvalues are isolated and it can't be a dense subgroup, so there is a $k>0$ such that 

$$ \xi^k \rho(\Lo_{\phi}|_E)=\rho(\Lo_{\phi}|_E).$$

We already know $|\varphi|$ is an eigenvector for the spectral radius, thus we have both 
\begin{align*}
    \Lo_{\phi}^k(|\varphi|)=\rho(\Lo_{\phi})^k |\varphi|\\
    \Lo_{\phi}^k(\varphi)=\rho(\Lo_{\phi})^k \varphi
\end{align*}
and then
$$ \Lo_{\phi}^k(|\varphi|-\varphi)=\rho(\Lo_{\phi})^k (|\varphi|-\varphi).$$ 
We can assume without loss of generality that there exists $x_0$ such that $\varphi(x_0)>0$, otherwise it is enough replacing $\varphi$ by $\frac{\varphi}{\varphi(x_{0})}$. Thus  $(|\varphi|-\varphi)(x_{0})=0$. Similarly to the end of the proof of  Lemma \ref{Lemaxi} we conclude that $|\varphi|-\varphi$ is null in $\{y \in f^{-kn}(x_{0}) : n  \in \N\}$.  Topological exactness implies that $\{y \in f^{-kn}(x_{0}) : n \in \N\}$ is dense. Thus continuity implies that $|\varphi|-\varphi \equiv 0$. Hence
 $$\xi \rho(\Lo_{\phi|E})\varphi=\Lo_{\phi}\varphi = \Lo_{\phi}|\varphi| = \rho(\Lo_{\phi|E}) \varphi.$$  Finally $\xi=1$ and $\rho(\Lo_{\phi|E})$ is the unique peripheral eigenvalue. 
\end{proof}

\

\begin{proof}[Proof of the Theorem \ref{mainthC}] By Proposition \ref{propat}, to prove the Theorem is enough proving that $\Lo_{f,-t\log|Df|}|_{E}$ has the spectral gap property for all $t<t_0$. The proof will be divided in two cases.

 \

\textit{Case $E=BV[\Sc^1]:$}

\

 Let $R_t:=\lim_{n\to\infty} \|\Lo^{n}_{f,-t\log|Df|}1\|_{\infty}^{1/n} \text{ and } \Tilde{R}_t:=\lim_{n\to\infty}\| \frac{1}{ |Df^n|^{t}}\|_\infty^{1/n}$ be. Then, by  \cite[Theorem 3.2]{Ba00} and  \cite[Theorem 3]{BK90},
$R_t=\rho(\Lo_{f,-t\log|Df|}|_{BV}) = e^{P(t)}$ and, by  \cite[Theorem 1]{BK90}, $\Tilde{R}_t \geq \rho_{ess}(\Lo_{f,-t\log|Df|}|_{BV})$. 

\begin{lemma}
$t \geq 0 \Rightarrow \Tilde{R}_t\leq 1$.
\end{lemma}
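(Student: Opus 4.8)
The plan is to reduce the whole statement to the behaviour of the minimal expansion rate. By the chain rule $\log|Df^n(x)| = \sum_{j=0}^{n-1}\log|Df(f^j(x))|$, and setting $a_n := \inf_{x\in\Sc^1}\log|Df^n(x)|$ (a genuine minimum, since $\Sc^1$ is compact and $\log|Df^n|$ is continuous). Because $t\ge 0$, the function $x\mapsto |Df^n(x)|^{-t}$ attains its maximum exactly where $|Df^n|$ is smallest, so
$$
\Big\|\tfrac{1}{|Df^n|^{t}}\Big\|_\infty = \exp(-t\,a_n), \qquad \tilde R_t = \exp\Big(-t\,\lim_{n\to\infty}\tfrac{a_n}{n}\Big).
$$
Thus the lemma is equivalent to showing that $L:=\lim_{n}\frac{a_n}{n}\ge 0$.

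First I would check that this limit exists. Again by the chain rule, $\log|Df^{n+m}(x)| = \log|Df^n(x)| + \log|Df^m(f^n(x))| \ge a_n + a_m$ for every $x$, so $a_{n+m}\ge a_n + a_m$; the sequence $(a_n)$ is superadditive, and Fekete's lemma gives $L = \lim_n a_n/n = \sup_n a_n/n$, which is finite because $a_n \le n\,\|\log|Df|\|_\infty$.

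The heart of the argument is to identify $L$ with the minimal Lyapunov exponent. For every $f$-invariant $\mu$, invariance yields $\int\log|Df^n|\,d\mu = n\,\chi_\mu(f)$, whence $a_n\le n\,\chi_\mu(f)$ and $L\le \inf_{\mu\in\mathcal{M}_1(f)}\chi_\mu(f)$. For the reverse inequality I would take, for each $n$, a point $x_n$ realizing $a_n$ and form the empirical measures $\mu_n := \frac1n\sum_{j=0}^{n-1}\delta_{f^j(x_n)}$; any weak-$*$ accumulation point $\mu$ of $(\mu_n)$ is $f$-invariant, and continuity of $\log|Df|$ gives $\int\log|Df|\,d\mu = \lim_k \frac{1}{n_k}\log|Df^{n_k}(x_{n_k})| = L$, so $L = \chi_\mu(f) \ge \inf_{\nu}\chi_\nu(f)$. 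Hence $L=\inf_{\mu\in\mathcal{M}_1(f)}\chi_\mu(f)$.

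Finally, in the present transitive, positive-entropy setting every ergodic measure has non-negative Lyapunov exponent by \cite[Corollary 1.2]{A20}, and by \cite[Theorem 1]{CLR03} the infimum $\inf_\mu\chi_\mu(f)$ is attained at an ergodic measure; therefore $L = \inf_{\mu\in\mathcal{M}_1(f)}\chi_\mu(f)\ge 0$. Substituting into the displayed formula gives $\tilde R_t = e^{-tL}\le 1$ for all $t\ge 0$, as claimed. The main obstacle is the lower bound $L\ge\inf_\mu\chi_\mu(f)$: it rests on the empirical-measure (ergodic-optimization) construction and on checking that accumulation points of $(\mu_n)$ are invariant. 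Compactness of $\Sc^1$ together with continuity of $\log|Df|$ makes this routine, but it is where the genuine content of the lemma lies.
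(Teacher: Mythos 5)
Your proof is correct and rests on the same mechanism as the paper's: empirical measures along points where $|Df^{n}|$ is minimal converge weak-$*$ to an invariant measure whose Lyapunov exponent controls $\tilde{R}_{t}$, and non-negativity of the exponents of ergodic measures in the transitive positive-entropy setting (via \cite{A20}) finishes the argument. The only cosmetic difference is that you argue directly, identifying $\lim_n \tfrac{1}{n}\inf_x\log|Df^n(x)|$ with $\inf_\mu\chi_\mu(f)$ via Fekete's lemma, whereas the paper runs the same empirical-measure construction by contradiction from $\tilde{R}_t>1$.
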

\begin{proof}
Obviously $\tilde{R}_{0} = 1$. Take $t > 0$. Suppose by absurd that $\Tilde{R}_t > 1$. Then there exists $\lambda > 1$  and $n_{0} \in \N$ such that $\left|\left|  \frac{1}{ |Df^n|^{t}}\right|\right|_\infty^{1/n} \geq \lambda,$ for all $n \geq n_{0}$. Thus, for each $n \geq n_{0}$ there exists $x_{n} \in \Sc^{1}$ with:
 $$
 \frac{1}{ |Df^n(x_{n})|^{\frac{t}{n}}} \geq \lambda \Rightarrow |Df^{n}(x_{n})| \leq \lambda^{\frac{-n}{t}}.
$$
Define $\mu_{n} := \frac{1}{n}\sum_{j= 0}^{n-1}\delta_{f^{j}(x_{n})}$. Since the space of probabilities on $\Sc^{1}$ is compact, in the weak-$^{\ast}$ topology, take $\mu := \lim_{k \to \infty}\mu_{n_{k}}$. Note that $\mu$ is $f-$invariant, furthermore
$$
\int\log |Df| d\mu = \lim_{k \to \infty}\frac{1}{n_{k}}\sum_{j= 0}^{n_{k}-1}\log |Df(f^{j}(x_{n_{k}}))| = \lim_{k \to \infty}\frac{1}{n_{k}}\log |Df^{n_{k}}(x_{n_{k}})| \leq 
$$
$$
-\frac{1}{t}\log \lambda < 0.
$$
By ergodic decomposition theorem we can write $\mu = \int \mu_{x}d\mu(x)$, where each $\mu_{x}$ is $f-$ergodic (see e.g. \cite{OV16}). Hence
$$
\int\log |Df| d\mu = \int(\int\log |Df| d\mu_{x})d\mu(x) = \int \chi_{\mu_{x}}(f)d\mu(x) \leq -\frac{1}{t}\log \lambda < 0 \Rightarrow
$$
$$
 \exists \,\mu_{x} \text{ com } \chi_{\mu_{x}}(f) < 0.
$$
This is absurd because $\mathcal{M}^{-}_{e}(f) = \emptyset$. 
\end{proof}

\

Note that $R_t=e^{P(t)}>1$, for all $t \in [0,t_0)$.  Applying the previous lemma, it follows that $\Lo_{f,-t\log|Df|}|_{BV}$ is quasi-compact for all $t\in[0,t_0)$ and by Lemma \ref{LemaEss} it has the spectral gap property for $t \in [0,t_0)$.

\

\textit{Case $E=C^{\alpha}(\Sc^1,\C)$ or $C^{r}(\Sc^1,\C):$}

\begin{lemma}\label{decre}
The pressure function $t \mapsto P(t)$ is strictly decreasing in $(\infty,t_0]$.
\end{lemma}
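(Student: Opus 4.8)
The plan is to obtain the strict monotonicity purely from the convexity of $P$ together with the boundary behaviour already established in this transitive setting, without re-entering any spectral analysis. First I would record the two ingredients that are available here. On the one hand, since we are in the transitive case we have $P=P_+$, and $P$ is a supremum of the affine maps $t\mapsto h_\mu(f)-t\chi_\mu(f)$ over $\mu\in M_e^+(f)$; hence, exactly as in Lemma \ref{L2}, $P$ is convex on $\mathbb R$ and non-increasing. On the other hand, from $t_0:=\inf\{t\in(0,1]: P(t)=0\}$, together with $P(0)=h_{top}(f)>0$ and the already noted fact that $P\equiv 0$ on $[t_0,+\infty)$, monotonicity yields the dichotomy $P(t)>0$ for every $t<t_0$ and $P(t)=0$ for every $t\ge t_0$.

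Then I would argue by contradiction. If $P$ failed to be strictly decreasing on $(-\infty,t_0]$, then, being non-increasing, there would exist $a<b\le t_0$ with $P(a)=P(b)$, so that $P$ is constant on $[a,b]$. The decisive step is to propagate this flat piece to the right by convexity: for any $t>b$, comparing the chord slopes on $[a,b]$ and on $[b,t]$ gives $0=\frac{P(b)-P(a)}{b-a}\le \frac{P(t)-P(b)}{t-b}$, hence $P(t)\ge P(b)$; combined with $P$ non-increasing this forces $P(t)=P(b)=P(a)$ for all $t\ge b$. Evaluating at $t_0$ (or letting $t\to+\infty$) then gives $P(a)=0$.

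This contradicts $P(a)>0$, which holds precisely because $a<t_0$. Consequently no such pair $a<b$ can exist, and $P$ is strictly decreasing on $(-\infty,t_0]$, as claimed.

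I do not expect a genuine obstacle in this lemma: the argument is essentially a soft consequence of convexity plus the location of the zero set of $P$. The only two points that deserve a line of care are that the convexity of $P$ is \emph{global} (it holds for all real $t_1,t_2$, since $P$ is a supremum of affine functions, so the computation in Lemma \ref{L2} applies verbatim and not only on $[0,+\infty)$), and that the \emph{strict} inequality $P(t)>0$ for $t<t_0$ genuinely follows from $t_0$ being the infimum of the zero set together with $P(0)>0$ and monotonicity; both are immediate once stated explicitly.
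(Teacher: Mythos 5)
Your proof is correct, but it takes a genuinely different route from the paper's. The paper argues directly through equilibrium states: expansiveness gives upper semi-continuity of $\mu\mapsto h_\mu(f)$, hence an ergodic equilibrium state $\mu_{-t\log|Df|}$ exists for each $t$; for $t<t_0$ one has $P(t)=h_t-t\lambda_t>0$, so $h_t>0$ and the Ruelle--Margulis inequality forces $\lambda_t>0$; comparing the affine functions $s\mapsto h_t-s\lambda_t$ at two parameters $t<s<t_0$ then yields $P(t)\ge h_s-t\lambda_s=P(s)+(s-t)\lambda_s>P(s)$. Your argument replaces all of this with the soft observation that a convex, non-increasing function with a flat piece $[a,b]$ must stay constant on $[a,+\infty)$, which collides with $P(a)>0=P(t_0)$. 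Both proofs are complete given what the paper has already established ($P=P_+$ convex and non-increasing in the transitive case, $P>0$ on $(-\infty,t_0)$, $P\equiv 0$ on $[t_0,+\infty)$); yours has the advantage of not invoking existence of equilibrium states at all. What the paper's computation buys, and yours does not, is the quantitative by-product recorded as Remark \ref{decre2} --- namely that $t<s$ and $\lambda_s>0$ already imply $P(t)>P(s)$ --- which is reused later (in Section \ref{t0}) to show $t_0=1$ in the presence of an a.c.i.p.\ with positive Lyapunov exponent; if one adopted your proof, that remark would need its own short argument.
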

\begin{proof}
Since $f$ is expansive, classic Ergodic Theory implies the entropy function  $\mu \mapsto h_\mu(f)$ is upper semi-continuous (see e.g. \cite{OV16}) and therefore $f$ has at least one ergodic equilibrium state $\mu_\phi$ for a given potential $\phi$.
For all $t<t_0$ we have $P(t)>0$, thus $P(t)=h_t -t \lambda_t$, where $h_t:= h_{\mu_{-t\log |Df|}}$ and $\lambda_t:=\int \log|Df| d\mu_{-t\log |Df|}>0$  by Ruelle-Margulis's inequality. Take $t<s<t_0$, then by supremum definition
$$
     P(t)  = h_t - t \lambda_t \geq h_s - t \lambda_s  
   = h_s - s \lambda_s + (s-t)\lambda_s 
   >  h_s - s \lambda_s = P(s)
$$
and $P$ is strictly decreasing.
\end{proof}

\begin{remark}\label{decre2}
Note that, the previous estimates also shows  that if $t < s$ and $\lambda_{s} > 0$ then $P(t) > P(s).$
\end{remark}

For the following lemmas we need to estimate the essential spectral radius, which in the given spaces will require different approaches. First for $E=C^\alpha(\Sc^1,\C)$,  the results \cite[Theorems 1 and 2]{BJL96} reads, in our context, as the following

\begin{theorem}\label{CH}
Let $f:\Sc^1 \rightarrow \Sc^1$ be a $C^1-$local diffeomorphism and let $\phi \in C^\alpha(\Sc^1, \R)$ be then 
$$
\rho_{ess}(\Lo_{f,\phi}|_{C^\alpha})=\lim_{n \to \infty}\|\Lo^n_{f,\phi-\alpha \log|Df|}1\|_\infty^{1/n}
\text{ and }
$$
$$
\rho(\Lo_{f,\phi}|_{C^\alpha})= \max\{\lim_{n \to \infty}\|\Lo^n_{f,\phi}1\|_\infty^{1/n} \,,\,  \rho_{ess}(\Lo_{f,\phi}|_{C^\alpha})\}.$$
\end{theorem}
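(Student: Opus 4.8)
The plan is to deduce Theorem \ref{CH} from the general spectral estimates of \cite[Theorems 1 and 2]{BJL96}, so the first task is to verify that our setting meets their hypotheses and then to identify their abstract quantities with the explicit expressions above. A $C^1$ local diffeomorphism $f$ of $\Sc^1$ of degree $d$ partitions the circle into finitely many full branches, each a $C^1$ diffeomorphism of an arc onto $\Sc^1$; moreover $|Df|$ is continuous, hence bounded and bounded away from $0$, so each inverse branch contracts the metric at the local rate $|Df|^{-1}$. This is precisely the finite-branch structure under which \cite{BJL96} computes the essential spectral radius of $\Lo_{f,\phi}$ on $C^\alpha$, so the general theorems apply once this correspondence is recorded.

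The identification of the abstract quantities is the heart of the matter. The essential spectral radius on $C^\alpha$ is governed by the competition between the weight $e^\phi$ and the loss of regularity incurred under iteration: when one controls the $\alpha$-H\"older seminorm of $\Lo^n_{f,\phi}g$, the estimate produces, alongside a sup-norm remainder, a leading coefficient in which each inverse branch contributes the factor $e^{S_n\phi}\,|Df^n|^{-\alpha}$ evaluated along that branch. By the chain rule $\log|Df^n|=S_n\log|Df|$, summing these contributions over all branches gives exactly $\Lo^n_{f,\phi-\alpha\log|Df|}1$, whose exponential growth rate $\lim_n\|\Lo^n_{f,\phi-\alpha\log|Df|}1\|_\infty^{1/n}$ is therefore the value assigned to $\rho_{ess}(\Lo_{f,\phi}|_{C^\alpha})$. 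Crucially, this quantity involves only sup-norms, which is what lets it make sense even though $f$ is merely $C^1$ and so $\phi-\alpha\log|Df|$ need not be H\"older. The spectral radius formula is then the second assertion: outside $\rho_{ess}$ the spectrum consists of finitely many eigenvalues of finite multiplicity, so $\rho(\Lo_{f,\phi}|_{C^\alpha})$ is the larger of $\rho_{ess}$ and the modulus of the dominant genuine eigenvalue, and the latter equals the $C^0$ growth rate $\lim_n\|\Lo^n_{f,\phi}1\|_\infty^{1/n}=\rho(\Lo_{f,\phi}|_{C^0})$ since $C^\alpha$ is dense in $C^0$ and $\Lo_{f,\phi}$ acts continuously on both.

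If instead one wished to argue from first principles, the route would be the classical Ionescu-Tulcea-Marinescu programme: first establish a Lasota-Yorke inequality of the form $\var_\alpha(\Lo^n_{f,\phi}g)\le C\,\|\Lo^n_{f,\phi-\alpha\log|Df|}1\|_\infty\,\var_\alpha(g)+C_n\|g\|_\infty$ by direct estimation of the H\"older differences of $\Lo^n_{f,\phi}g$; then combine it with Nussbaum's formula for the essential spectral radius and Hennion's theorem, using the Arzela-Ascoli compactness of the $C^\alpha$ unit ball inside $C^0$, to obtain the upper bound $\rho_{ess}\le\lim_n\|\Lo^n_{f,\phi-\alpha\log|Df|}1\|_\infty^{1/n}$.

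The main obstacle in either approach is the same: the Lasota-Yorke/Hennion machinery delivers only the \emph{upper} bound for $\rho_{ess}$, and proving that this value is actually attained---the equality asserted in the theorem---is the delicate point, requiring either the construction of approximate eigendistributions concentrated along orbits that realize the growth rate of $\Lo^n_{f,\phi-\alpha\log|Df|}1$, or a careful transcription of exactly this lower-bound argument from \cite{BJL96}. A secondary point needing care is to confirm that \cite{BJL96}'s hypotheses are met with only $C^1$ regularity of $f$ rather than $C^{1+\alpha}$; this is possible precisely because all relevant constants are expressed through sup-norms of $|Df^n|^{-\alpha}$ instead of through H\"older norms of $\log|Df|$.
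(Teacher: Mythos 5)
Your proposal matches the paper exactly: Theorem \ref{CH} is presented there without proof, as a direct transcription of \cite[Theorems 1 and 2]{BJL96} to the circle setting, which is precisely the reduction you carry out (each inverse branch contributing $e^{S_n\phi}|Df^n|^{-\alpha}$, summing to $\Lo^n_{f,\phi-\alpha\log|Df|}1$). Your additional observations --- that only sup-norms of $|Df^n|^{-\alpha}$ enter, so $C^1$ regularity of $f$ suffices even though $\phi-\alpha\log|Df|$ need not be H\"older, and that the equality (not merely the Lasota--Yorke upper bound) for $\rho_{ess}$ is the part genuinely owed to \cite{BJL96} --- are sound and supply more justification than the paper itself records.
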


\begin{remark}
Note that $\lim_{n \to \infty}\|\Lo^n_{f,g}1\|_\infty^{1/n}=\rho(\Lo_{f,g}|_{C^0}).$
\end{remark}

For $E=C^{r}(\Sc^1,\C)$,  the results \cite[Theorems 1 and 2]{CL97} summarize to:

\begin{theorem}\label{Lat}
Let $f:\Sc^1 \rightarrow \Sc^1$ be a $C^r-$local diffeomorphism and let $\phi \in C^{r}(\Sc^1, \R)$ be then
$$ \rho_{ess}(\Lo_{f,\phi}|_{C^k}) \leq \exp\big[\sup_{\mu \in \mathcal{M}_{1}(f)}\{h_\mu(f)+\int\phi \text{d}\mu-k\chi_\mu(f)\}\big] \text{ and }$$
$$ \rho(\Lo_{f,\phi}|_{C^k}) \leq \exp\big[\sup_{\mu \in \mathcal{M}_{1}(f)}\{h_\mu(f)+\int\phi \text{d}\mu\}\big],$$
for $k=0,1,\dots, r.$
\end{theorem}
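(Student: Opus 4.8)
The statement is the one-dimensional specialization of \cite[Theorems 1 and 2]{CL97}, so the plan is not to reprove those estimates but to translate their conclusions into the circle setting and to recognize the dynamical limits they produce as the variational quantities above. First I would recall the precise form of \cite{CL97}: in their framework, for a $C^r$ map and $\phi \in C^r$, the bound on $\rho_{ess}(\Lo_{f,\phi}|_{C^k})$ is given through a limit of the type $\lim_{n\to\infty}\big(\sup_x \sum_{f^n(y)=x} e^{S_n\phi(y)}\,\|(Df^n(y))^{-1}\|^{k}\big)^{1/n}$, the weight $\|(Df^n)^{-1}\|^{k}$ arising from differentiating the inverse branches $k$ times via the chain rule, while the bound on $\rho(\Lo_{f,\phi}|_{C^k})$ is the same limit with $k$ replaced by $0$.

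Next I would specialize to $\Sc^1$. Since $f$ is a local diffeomorphism of the circle, $Df$ is a nonvanishing scalar with a single associated expansion rate, so $\|(Df^n(y))^{-1}\| = |Df^n(y)|^{-1} = \prod_{j=0}^{n-1}|Df(f^j(y))|^{-1}$ and $S_n\phi(y)=\sum_{j=0}^{n-1}\phi(f^j(y))$. Hence the general limit becomes $\lim_{n\to\infty}\big(\sup_x\sum_{f^n(y)=x}e^{S_n(\phi-k\log|Df|)(y)}\big)^{1/n}$, which equals $\exp\big[P_{top}(f,\phi-k\log|Df|)\big]$; the unweighted limit equals $\exp\big[P_{top}(f,\phi)\big]=\rho(\Lo_{f,\phi}|_{C^0})$. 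I would then apply the variational principle to write $P_{top}(f,\phi-k\log|Df|)=\sup_{\mu\in\mathcal{M}_{1}(f)}\{h_\mu(f)+\int\phi\,d\mu-k\chi_\mu(f)\}$ with $\chi_\mu(f)=\int\log|Df|\,d\mu$, and similarly for the unweighted pressure; this produces both displayed inequalities.

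The main obstacle is not this algebraic translation but verifying that the hypotheses of \cite{CL97} genuinely hold here: such essential-spectral estimates are often proved under a uniform-expansion assumption with bounded distortion, whereas our $f$ is only a transitive $C^r$ local diffeomorphism that need not be expanding. I would therefore check that \cite{CL97} uses only the local-diffeomorphism structure (finitely many smooth full branches, a generating partition by injectivity domains, and chain-rule control of the inverse branches) rather than expansion, so that the Lasota-Yorke-type estimate behind their Theorem 1 remains valid. The exponent $\chi_\mu(f)$ is then permitted to be small or nonpositive, which is exactly the feature that lets the essential spectral radius approach the full pressure and thereby signals the loss of spectral gap in Theorem \ref{mainthC}.
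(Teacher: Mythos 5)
Your proposal matches the paper exactly: the paper gives no independent proof of this statement, merely asserting that \cite[Theorems 1 and 2]{CL97} ``summarize to'' the displayed inequalities in the one-dimensional conformal setting, which is precisely the citation-plus-translation you carry out (using $\|(Df^n(y))^{-1}\| = |Df^n(y)|^{-1}$ and the variational principle to identify the relevant limits with $\exp P_{top}(f,\phi-k\log|Df|)$ and $\exp P_{top}(f,\phi)$). Your closing caveat --- that one must check \cite{CL97} does not secretly require uniform expansion, since here $f$ is only a (non-expanding) local diffeomorphism --- is a legitimate point that the paper itself never addresses, so you have if anything been more careful than the source.
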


Note that in this case $$ \rho_{ess}(\Lo_{f,-t\log|Df|}|_{C^k}) \leq e^{P(k+t)} \text{ and } \rho(\Lo_{f,-t\log|Df|}|_{C^k}) \leq e^{P(t)}.$$

Then on continuous H\"older functions,
$$\rho_{ess}(\Lo_{f,-t\log|Df|}|_{C^\alpha}) = \rho(\Lo_{f,-(t+\alpha)\log|Df|}|_{C^0}) \leq e^{P(\alpha +t)}$$
by Theorems \ref{CH} and \ref{Lat}.

In either case we have 
\begin{lemma}\label{essest}
 $\rho_{ess}(\Lo_{f,-t\log|Df|}|_{E})\leq e^{P(k+t)},$ for some $k>0.$
\end{lemma}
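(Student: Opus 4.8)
The plan is to obtain the bound as an immediate specialization of the two essential-spectral-radius estimates already recorded in Theorems \ref{CH} and \ref{Lat}, handling $E=C^\alpha(\Sc^1,\C)$ and $E=C^r(\Sc^1,\C)$ separately but with identical bookkeeping: in each case, substituting the potential $\phi=-t\log|Df|$ converts the relevant supremum over invariant measures into a translate of the geometric pressure function $P$, because $\int\log|Df|\,d\mu=\chi_\mu(f)$ for every $f$-invariant $\mu$.

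For $E=C^\alpha(\Sc^1,\C)$ I would take $k=\alpha$. Applying Theorem \ref{CH} with $\phi=-t\log|Df|$ gives $\phi-\alpha\log|Df|=-(t+\alpha)\log|Df|$, whence $\rho_{ess}(\Lo_{f,-t\log|Df|}|_{C^\alpha})=\lim_{n}\|\Lo_{f,-(t+\alpha)\log|Df|}^n 1\|_\infty^{1/n}=\rho(\Lo_{f,-(t+\alpha)\log|Df|}|_{C^0})$, using that this growth rate computes the $C^0$-spectral radius. The latter is then bounded by the $k=0$ case of Theorem \ref{Lat} applied to the potential $-(t+\alpha)\log|Df|$, namely $\rho(\Lo_{f,-(t+\alpha)\log|Df|}|_{C^0})\leq\exp\big[\sup_{\mu\in\mathcal{M}_1(f)}\{h_\mu(f)-(t+\alpha)\chi_\mu(f)\}\big]=e^{P(t+\alpha)}$, the last equality being the definition of $P$. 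This is precisely the chain displayed just above the statement, and it settles the H\"older case with $k=\alpha>0$.

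For $E=C^r(\Sc^1,\C)$ I would instead apply Theorem \ref{Lat} directly, with index $r$ and potential $\phi=-t\log|Df|$: this yields $\rho_{ess}(\Lo_{f,-t\log|Df|}|_{C^r})\leq\exp\big[\sup_{\mu\in\mathcal{M}_1(f)}\{h_\mu(f)-t\chi_\mu(f)-r\chi_\mu(f)\}\big]=\exp\big[\sup_{\mu\in\mathcal{M}_1(f)}\{h_\mu(f)-(t+r)\chi_\mu(f)\}\big]=e^{P(t+r)}$, again by the definition of $P$. Hence the claim holds with the positive integer $k=r$.

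There is no genuine obstacle here: the lemma merely repackages the cited estimates once the potential is specialised to the geometric one, and the only step deserving care is recognising the growth rate $\lim_{n}\|\Lo^n 1\|_\infty^{1/n}$ as the $C^0$-spectral radius in the H\"older case, so that the upper bound becomes a clean translate $e^{P(t+k)}$ of the pressure rather than an abstract supremum. The usefulness of this translate is what matters downstream: since $P$ is strictly decreasing on $(-\infty,t_0]$ by Lemma \ref{decre}, is constant equal to zero on $[t_0,\infty)$, and satisfies $P(t)>0$ for $t<t_0$, one gets $P(t+k)<P(t)$ for every $k>0$ and every $t<t_0$, which will give $\rho_{ess}<\rho$ and hence quasi-compactness, and then the spectral gap by Proposition \ref{LemaEss}.
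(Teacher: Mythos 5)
Your proof is correct and is essentially identical to the paper's argument: the paper likewise takes $k=\alpha$ in the H\"older case by combining Theorem \ref{CH} with the $k=0$ bound of Theorem \ref{Lat} applied to the shifted potential $-(t+\alpha)\log|Df|$, and takes $k=r$ in the smooth case by applying Theorem \ref{Lat} directly. (The $BV$ case is handled separately in the paper, as in your proposal, so omitting it here is consistent with the lemma's placement.)
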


We start proving spectral gap for non-positive $t$.
\begin{lemma}
$\Lo_{f,-t\log|Df|}|_{E}$ has spectral gap for all $t\leq 0.$
\end{lemma}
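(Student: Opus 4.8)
The plan is to obtain the spectral gap from quasi-compactness through Proposition~\ref{LemaEss}; thus everything reduces to establishing the strict inequality $\rho_{ess}(\Lo_{f,-t\log|Df|}|_E) < \rho(\Lo_{f,-t\log|Df|}|_E)$ for every $t\le 0$. Write $\phi_t:=-t\log|Df|$. By Lemma~\ref{essest} there is $k>0$ with $\rho_{ess}(\Lo_{\phi_t}|_E)\le e^{P(t+k)}$ (indeed $k=\alpha$ for $E=C^\alpha$ and $k=r$ for $E=C^r$). Hence it is enough to bound the spectral radius from below by $e^{P(t)}$ and to verify $P(t)>P(t+k)$.

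For the lower bound on the spectral radius I first note that $\rho(\Lo_{\phi_t}|_E)\ge\rho(\Lo_{\phi_t}|_{C^0})$: since $1\in E$ and $\|\cdot\|_\infty\le\|\cdot\|_E$, one has $\limsup_n\|\Lo_{\phi_t}^n 1\|_\infty^{1/n}\le\limsup_n\|\Lo_{\phi_t}^n 1\|_E^{1/n}\le\rho(\Lo_{\phi_t}|_E)$, and the left-hand side equals $\rho(\Lo_{\phi_t}|_{C^0})$ by the Remark following Theorem~\ref{CH}. The key point is then the identity $\rho(\Lo_{\phi_t}|_{C^0})=e^{P_{top}(f,\phi_t)}=e^{P(t)}$. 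The inequality ``$\le$'' is the case $k=0$ of Theorem~\ref{Lat}. For ``$\ge$'' recall that $f$ is expansive and topologically exact: the fibres $f^{-n}(x)$ are $(n,\varepsilon_0)$-separated (distinct preimages share the image $f^n(y)=x$, so they must separate before time $n$), while topological exactness makes $f^{-n}(x)$ asymptotically $(n,\varepsilon)$-spanning. Comparing $\|\Lo_{\phi_t}^n 1\|_\infty=\sup_x\sum_{f^n(y)=x}e^{S_n\phi_t(y)}$ with the separated/spanning definition of the pressure gives $\liminf_n\frac1n\log\|\Lo_{\phi_t}^n 1\|_\infty\ge P_{top}(f,\phi_t)$; this is the standard identity for transitive local homeomorphisms, cf.\ \cite{PU10}. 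In the transparent case $t=0$ one even has $\Lo_{f,0}1\equiv\deg(f)$, so $1$ is an eigenfunction and $\rho(\Lo_{f,0}|_E)=\deg(f)=e^{h_{top}(f)}=e^{P(0)}$ with no further work.

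Finally I verify $P(t)>P(t+k)$. Since $P=P_+$ is non-increasing (Lemma~\ref{L2}), $P(t)\ge P(0)=h_{top}(f)=\log\deg(f)>0$ for all $t\le 0$. If $t+k\le t_0$ then $t<t+k\le t_0$ and the strict monotonicity of $P$ on $(-\infty,t_0]$ (Lemma~\ref{decre}) gives $P(t)>P(t+k)$; if $t+k>t_0$ then $P(t+k)=0<P(t)$ by Theorem~\ref{mainthA}. In either case $\rho_{ess}(\Lo_{\phi_t}|_E)\le e^{P(t+k)}<e^{P(t)}\le\rho(\Lo_{\phi_t}|_E)$, so $\Lo_{\phi_t}|_E$ is quasi-compact and has the spectral gap property by Proposition~\ref{LemaEss}. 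The main obstacle is the lower bound $\rho(\Lo_{\phi_t}|_{C^0})\ge e^{P(t)}$: unlike in the uniformly expanding case the inverse branches of $f$ need not contract (the map may carry an indifferent point), so the distortion estimates that convert the preimage sums into the topological pressure must be handled carefully, and it is here that topological exactness, rather than mere expansiveness, is essential.
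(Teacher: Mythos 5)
Your overall strategy --- establishing quasi-compactness for every $t\le 0$ in one stroke via the sandwich $\rho_{ess}(\Lo_{f,\phi_t}|_E)\le e^{P(t+k)}<e^{P(t)}\le\rho(\Lo_{f,\phi_t}|_E)$, where $\phi_t:=-t\log|Df|$, and then invoking Proposition~\ref{LemaEss} --- is genuinely different from the paper's. The paper only computes the spectral radius exactly at $t=0$ (where $1$ is an explicit eigenfunction, so $\rho(\Lo_{f,0}|_E)=\deg(f)=e^{P(0)}$) and then propagates the spectral gap down to $-\infty$ by a bootstrap: openness of the spectral gap property (Corollary~\ref{analy}), the identity $\rho=e^{P}$ on the region where the gap already holds (Lemma~\ref{Lemapress}), and semicontinuity of spectral components to pin down $\rho$ at the infimum $\tau_1$ of the good set. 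The reason for this longer route is precisely that the paper never establishes the \emph{unconditional} lower bound $\rho(\Lo_{f,\phi_t}|_E)\ge e^{P(t)}$ that your argument requires: Theorems~\ref{CH} and~\ref{Lat} give only upper bounds, and Lemma~\ref{Lemapress} yields the identity only after the spectral gap is known.

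That lower bound is the genuine gap in your proposal. You reduce it correctly to $\rho(\Lo_{f,\phi_t}|_{C^0})\ge e^{P(t)}$, but your justification --- that topological exactness makes $f^{-n}(x)$ asymptotically $(n,\varepsilon)$-spanning, together with an appeal to \cite{PU10} --- does not hold up. The fibre $f^{-n}(x)$ is $(n,\delta)$-spanning only for $\delta$ exceeding the maximal diameter of a $1$-cylinder (for $j$ close to $n$ the points $f^j(z)$ and $f^j(y)$ are constrained only to lie in a common low-order cylinder), and this threshold need not lie below the expansivity constant, which is the scale at which separated/spanning sums compute the pressure; the identity in \cite{PU10} is proved for distance-expanding maps, which is exactly the hypothesis you lack. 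The fact you need is nevertheless true and is already in the paper's toolbox: $\lim_n\|\Lo^n_{f,\phi_t}1\|_\infty^{1/n}=e^{P(t)}$ is the quantity $R_t$ from the $BV$ part of the proof, asserted there via \cite{BK90}; alternatively it follows from cylinder sums, since the oscillation of $S_n\phi_t$ over $n$-cylinders is $o(n)$ by a Ces\`aro argument (cylinder diameters tend to zero and $\phi_t$ is uniformly continuous) and $\frac1n\log\sum_{C\in\mathcal{P}^n}\sup_C e^{S_n\phi_t}\to P(t)$ because $\mathcal{P}$ is generating and $f$ is expansive. With that substitution for your spanning-set sketch the argument closes, and is in fact shorter than the paper's; your remaining steps (the fibres being $(n,\varepsilon_0)$-separated, the comparison $\rho(\Lo|_{C^0})\le\rho(\Lo|_E)$, the case $t=0$, and the verification $P(t)>P(t+k)$ from Lemma~\ref{decre} and Theorem~\ref{mainthA}) are correct.
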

\begin{proof}

\textit{Claim: $\Lo_{f,0}|_E$ has spectral gap.}

\

\noindent In fact, since $P$ is strictly decreasing on $[0,t_0)$
$$\rho_{ess}(\Lo_{f,0}|_E)\leq e^{P(k)}<e^{P(0)}=e^{h_{top}(f)}=\deg(f)=\rho(\Lo_{f,0}|_E).$$
Hence $\Lo_{f,0}|_E$ is quasi-compact and by Lemma \ref{LemaEss} it has spectral gap.

\

Let $\tau_1:=\inf\{t<0; \Lo_{f,-t\log|Df|}|_E \text{ has the spectral gap property}\}$ be. Suppose by contradiction that $\tau_1 > -\infty.$ Since the spectral gap property is open (see Corollary \ref{analy}) then $\Lo_{f,-\tau_{1}\log|Df|}|_E$ has not spectral gap property.

\

\textit{Claim: $\rho(\Lo_{f,-\tau_1 \log|Df|}|_E)=e^{P(\tau_1)}$.}

\

\noindent In fact, for all $t \in (\tau_1,0]$ the transfer operator $\Lo_{-t\log|Df|}|_E$ has the spectral gap property, then $\rho(\Lo_{-t\log|Df|}|_{E})=e^{P(t)}$ and $(\tau_1,0]\ni t \mapsto \rho(\Lo_{-t\log|Df|}|_{E})$ is decreasing, by Lemma \ref{Lemapress} and Lemma \ref{decre}. 
Now suppose by contradiction there is a $t_n \searrow \tau_1$ such that $\rho(\Lo_{-\tau_1\log|Df|}|_{E}) < \rho(\Lo_{-t_n\log|Df|}|_{E})-\varepsilon$. By semi-continuity of spectral components (see e.g. \cite{K95}) we have sp$(\Lo_{-t_n\log|Df|}|_{E})\subset B\big(0,\, \rho(\Lo_{-\tau_1\log|Df|}|_{E})+\epsilon/2\big)$.
Therefore $$\rho(\Lo_{-\tau_1\log|Df|}|_{E}) \geq \sup_{t\in(\tau_1,0]}\rho(\Lo_{-t\log|Df|}|_{E})=\sup_{t\in(\tau_1,0]}e^{P(t)}=e^{P(\tau_1)}.$$

\noindent On the other hand, by Theorem \ref{Lat} $\rho(\Lo_{-\tau_1\log|Df|}|_{C^{r}})\leq e^{P(\tau_1)}$. Furthermore, by Theorem \ref{CH} and, again,  Theorem \ref{Lat}
$$
\rho(\Lo_{-\tau_1\log|Df|}|_{C^{\alpha}}) = \max\{\rho(\mathcal{L}_{-\tau_1\log|Df|} |_{C^{0}}) \,,\,  \rho_{ess}(\Lo_{-\tau_1\log|Df|}|_{C^\alpha})\} \leq
$$
$$
 \max\{ e^{P(\tau_{1})} , e^{P(\tau_{1} + \alpha)} \}.
$$
Since $t \mapsto P(t)$ is decreasing we obtain that $\rho(\Lo_{-\tau_1\log|Df|}|_{C^{\alpha}}) \leq  e^{P(\tau_{1})} $. Thus we conclude $\rho(\Lo_{-\tau_1 \log|Df|}|_E)=e^{P(\tau_1)}$.

\

Therefore, 
$$\rho(\Lo_{-\tau_1\log|Df|}|_{E})=e^{P(\tau_1)}>e^{P(\tau_1+k)}\geq \rho_{ess}(\Lo_{-\tau_1\log|Df|}|_{E}),$$
and by Lemma \ref{LemaEss} $\Lo_{-\tau_1\log|Df|}|_{E}$ has the spectral gap property, contradicting the minimality of $\tau_1.$
\end{proof}

Let $\tau_2:=\sup\{t>0; \Lo_{f,-t\log|Df|}|_{E}$ has the spectral gap property $\}$, note that $\tau_2$ exists and is at most $t_0$.
Analogously to the second claim in the previous Lemma, it holds $\rho(\Lo_{f,-\tau_2\log|Df|}|_{E})=e^{P(\tau_2)}$. Since the spectral gap property is open, $\Lo_{f,-\tau_2\log|Df|}|_{E}$ has not spectral gap and thus $\rho(\Lo_{f,-\tau_2\log|Df|}|_{E})=\rho_{ess}(\Lo_{f,-\tau_2\log|Df|}|_{E})$ by Lemma \ref{LemaEss}.
\begin{lemma}
$\tau_2=t_0$
\end{lemma}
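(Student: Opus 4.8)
The plan is to argue by contradiction, assuming $\tau_2 < t_0$, and to produce the spectral gap property \emph{at} $\tau_2$ itself, which contradicts the maximality in the definition of $\tau_2$. First I would record what is already in hand from the text immediately preceding the statement: the inequality $\tau_2 \leq t_0$, the identity $\rho(\Lo_{f,-\tau_2\log|Df|}|_{E})=e^{P(\tau_2)}$, the fact that $\Lo_{f,-\tau_2\log|Df|}|_{E}$ has no spectral gap (because spectral gap is open and $\tau_2$ is the supremum), and the resulting equality $\rho(\Lo_{f,-\tau_2\log|Df|}|_{E})=\rho_{ess}(\Lo_{f,-\tau_2\log|Df|}|_{E})$ obtained via Proposition \ref{LemaEss}.

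Next, under the assumption $\tau_2 < t_0$, I would invoke the strict monotonicity of the pressure from Lemma \ref{decre}: since $P$ is strictly decreasing on $(-\infty,t_0]$ and $P(t_0)=0$, we get $P(\tau_2)>0$. Combining the essential-spectral-radius estimate of Lemma \ref{essest}, namely $\rho_{ess}(\Lo_{f,-\tau_2\log|Df|}|_{E})\leq e^{P(k+\tau_2)}$ for some $k>0$, with the observation that $k+\tau_2>\tau_2$ forces $P(k+\tau_2)<P(\tau_2)$ (either $k+\tau_2\leq t_0$, where strict decrease applies, or $k+\tau_2>t_0$, where $P(k+\tau_2)=0<P(\tau_2)$), I obtain the strict inequality $\rho_{ess}(\Lo_{f,-\tau_2\log|Df|}|_{E})\leq e^{P(k+\tau_2)}<e^{P(\tau_2)}=\rho(\Lo_{f,-\tau_2\log|Df|}|_{E})$.

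This strict inequality says precisely that $\Lo_{f,-\tau_2\log|Df|}|_{E}$ is quasi-compact, so by Proposition \ref{LemaEss} it has the spectral gap property. This simultaneously contradicts the fact that $\Lo_{f,-\tau_2\log|Df|}|_{E}$ has no spectral gap and the equality $\rho=\rho_{ess}$ at $\tau_2$. Hence $\tau_2\geq t_0$, and together with $\tau_2\leq t_0$ this yields $\tau_2=t_0$.

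I do not anticipate a serious obstacle, since all the analytic content has already been isolated in the preceding results: the identification of $\rho$ with $e^{P}$ on the spectral-gap region and its boundary, the essential-radius bound $e^{P(k+t)}$, and the implication quasi-compactness $\Rightarrow$ spectral gap. The only point demanding care is ensuring that $P(k+\tau_2)<P(\tau_2)$ is \emph{strict}, which is exactly why the positivity $P(\tau_2)>0$ derived from $\tau_2<t_0$ is indispensable; consistently, if $\tau_2=t_0$ the argument correctly fails to yield a contradiction, as it must.
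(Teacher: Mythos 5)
Your argument is correct and follows essentially the same route as the paper: both rest on the chain $e^{P(\tau_2)}=\rho(\Lo_{-\tau_2\log|Df|}|_{E})=\rho_{ess}(\Lo_{-\tau_2\log|Df|}|_{E})\leq e^{P(\tau_2+k)}$ combined with the strict monotonicity of $P$ on $(-\infty,t_0]$, the only difference being that you phrase it as a contradiction with the absence of spectral gap at $\tau_2$ while the paper reads off $P(\tau_2)=P(\tau_2+k)$ directly. Your explicit case split on whether $k+\tau_2$ exceeds $t_0$ is a welcome clarification of a step the paper leaves implicit.
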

\begin{proof}
By the essential radius estimates on Lemma \ref{essest}, we get:
$$e^{P(\tau_2)}=\rho(\Lo_{-\tau_2\log|Df|}|_{E})=\rho_{ess}(\Lo_{-\tau_2\log|Df|}|_{E})\leq e^{P(\tau_2+k)}.$$

Since $P$ is decreasing we have $P(\tau_2)=P(\tau_2+k)$. Therefore $\tau_2\geq t_0$,  since $P(t)$ is strictly decreasing before $t_0$. Consequently, $\tau_2=t_0.$ 
\end{proof}
From definition of $\tau_2$ we get:
\begin{corollary}
$\Lo_{f,-t\log|Df|}|_{E}$ has spectral gap for all $t<t_0$.
\end{corollary}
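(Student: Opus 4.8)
The plan is to upgrade the identity $\tau_2=t_0$ from a statement about a single supremum into the full inclusion $(0,t_0)\subset S$, where $S:=\{t\in\R:\Lo_{f,-t\log|Df|}|_E\text{ has the spectral gap property}\}$ and $E=C^{\alpha}(\Sc^1,\C)$ or $C^{r}(\Sc^1,\C)$. The previous lemma already gives $(-\infty,0]\subset S$, so only the interval $(0,t_0)$ remains, and the subtlety is that $\tau_2=\sup(S\cap(0,\infty))=t_0$ forces points of $S$ arbitrarily close to $t_0$ but does not by itself forbid interior gaps. First I would record that $S$ is open (Corollary \ref{analy}) and introduce $b:=\sup\{s\in[0,t_0):[0,s]\subset S\}$; since $0\in S$ and $S$ is open we have $b>0$, and by the defining property of the supremum $[0,b)\subset S$.

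The heart of the argument is a propagation step at the endpoint $b$, of exactly the type already used to prove $\tau_1=-\infty$ and $\tau_2=t_0$. Assuming for contradiction that $b<t_0$, for every $s\in[0,b)\subset S$ Lemma \ref{Lemapress} gives $\rho(\Lo_{f,-s\log|Df|}|_E)=e^{P(s)}$; letting $s\nearrow b$ and invoking upper semicontinuity of the spectrum (see \cite{K95}) together with continuity of $P$, I would obtain $\rho(\Lo_{f,-b\log|Df|}|_E)\ge e^{P(b)}$, just as in the second claim of the $\tau_1$ lemma. On the other hand Lemma \ref{essest} bounds $\rho_{ess}(\Lo_{f,-b\log|Df|}|_E)\le e^{P(b+k)}$ for some $k>0$.

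The decisive comparison is then $P(b+k)<P(b)$ for every $b<t_0$: by Lemma \ref{decre} the function $P$ is strictly decreasing on $(-\infty,t_0]$ and it equals $P(t_0)=0$ on $[t_0,\infty)$, while $P(b)>0$, so $P(b+k)<P(b)$ whether $b+k$ falls before or after $t_0$. Hence $\rho_{ess}(\Lo_{f,-b\log|Df|}|_E)<\rho(\Lo_{f,-b\log|Df|}|_E)$, so this operator is quasi-compact and, by Proposition \ref{LemaEss}, has the spectral gap property; thus $b\in S$. Openness of $S$ then yields $[0,b+\delta]\subset S$ for some $\delta>0$ with $b+\delta<t_0$, contradicting the maximality of $b$. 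Therefore $b=t_0$, i.e. $[0,t_0)\subset S$, and together with $(-\infty,0]\subset S$ this gives the spectral gap property for all $t<t_0$.

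The main obstacle is precisely this passage from ``the supremum equals $t_0$'' to ``there are no interior gaps'', which is not formal and, as far as I can see, forces one to re-run the boundary-propagation argument at the right endpoint $b$ of the maximal spectral-gap interval issuing from $0$. That step depends on two nontrivial inputs established earlier: the identification $\rho=e^{P}$ on the spectral-gap region (Lemma \ref{Lemapress}), extended to its boundary by semicontinuity of the spectrum, and the strict monotonicity of $P$ before $t_0$ (Lemma \ref{decre}), which is exactly what makes the essential spectral radius strictly smaller than the spectral radius at $b$ and thereby forces quasi-compactness.
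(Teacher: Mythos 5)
Your argument is correct and is essentially the paper's own proof: the identification $\rho=e^{P}$ on the spectral-gap region extended to the boundary by semicontinuity, the bound $\rho_{ess}\le e^{P(\cdot+k)}$, strict monotonicity of $P$ before $t_0$, and quasi-compactness implying spectral gap, all deployed at the right endpoint of the interval issuing from $0$. Your explicit introduction of $b=\sup\{s:[0,s]\subset S\}$ in place of the bare supremum $\tau_2$ is in fact a welcome tightening, since it makes the step from ``$\tau_2=t_0$'' to ``no interior gaps'' genuinely rigorous rather than implicit in the phrase ``from definition of $\tau_2$''.
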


This completes the proof of Theorem \ref{mainthC}.
\end{proof}

\

\subsubsection{Effective thermodynamical phase transition}

\begin{proof}[Proof of the Corollary \ref{mainthD}]
Just like Proposition \ref{prop1}, this is a direct consequence of $\rho(\Lo_{f,-t\log|Df|}|_{E})=e^{P(t)}$ for $t<t_0$ and Corollary \ref{analy}.
\end{proof}

\

\section{Examples, discussions and more questions}\label{disque}

\subsection{Examples}
In this section we present some explicit examples of local diffeomorphisms with different types of phase transition. First, given a $C^1 $ diffeomorphism $g: [0,1/2] \to [0,1]$ consider the circle map with identification $0=1$ defined as,

\begin{equation}\label{difeo}
 f(x)=\begin{cases}
 g(x), \text{ if } 0\leq x \leq 1/2 \\
 1-g(1-x), \text{ if } 1/2 < x \leq 1.
 \end{cases}
\end{equation}

\noindent The map $f$ defined this way is a $C^1$ local diffeomorphism.

\begin{example} \normalfont Let $a \in (0,2]$, then $g_a(x)=ax+(4-2a)x^2$ gives a continuous family of local diffeomorphisms $f_a(x)$ by equation \ref{difeo}. Note that $f_a'(x)=a+(8-4a)x$ and $f_a'(0)=a$, so that the system undergoes a sort of pitchfork bifurcation:

  \begin{figure}[h]
        \centering
        \includegraphics[scale=5]{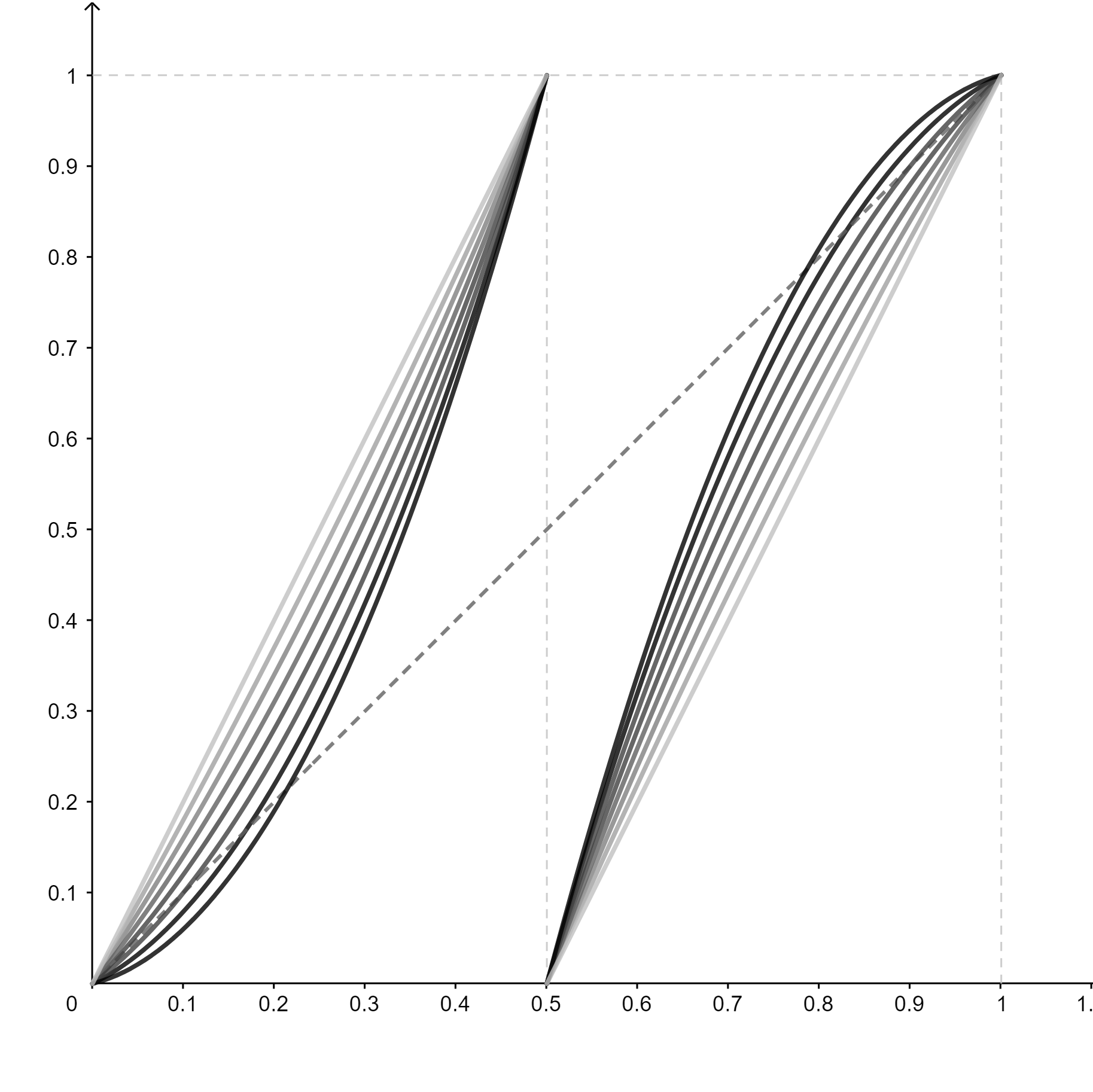}
        \caption{Collection of maps $f_a$}
        \label{bifurcation}
    \end{figure}

\begin{enumerate}
   
    \item For $a \in (1,2]$, the fixed point $0$ is expanding, and the map $f_a$ is in fact uniformly expanding. Note that for $a=2$, the map $f_a$ is the doubling map $2x \text{ mod } 1$.
   \begin{figure}[h]
        \centering
        \includegraphics[scale=5]{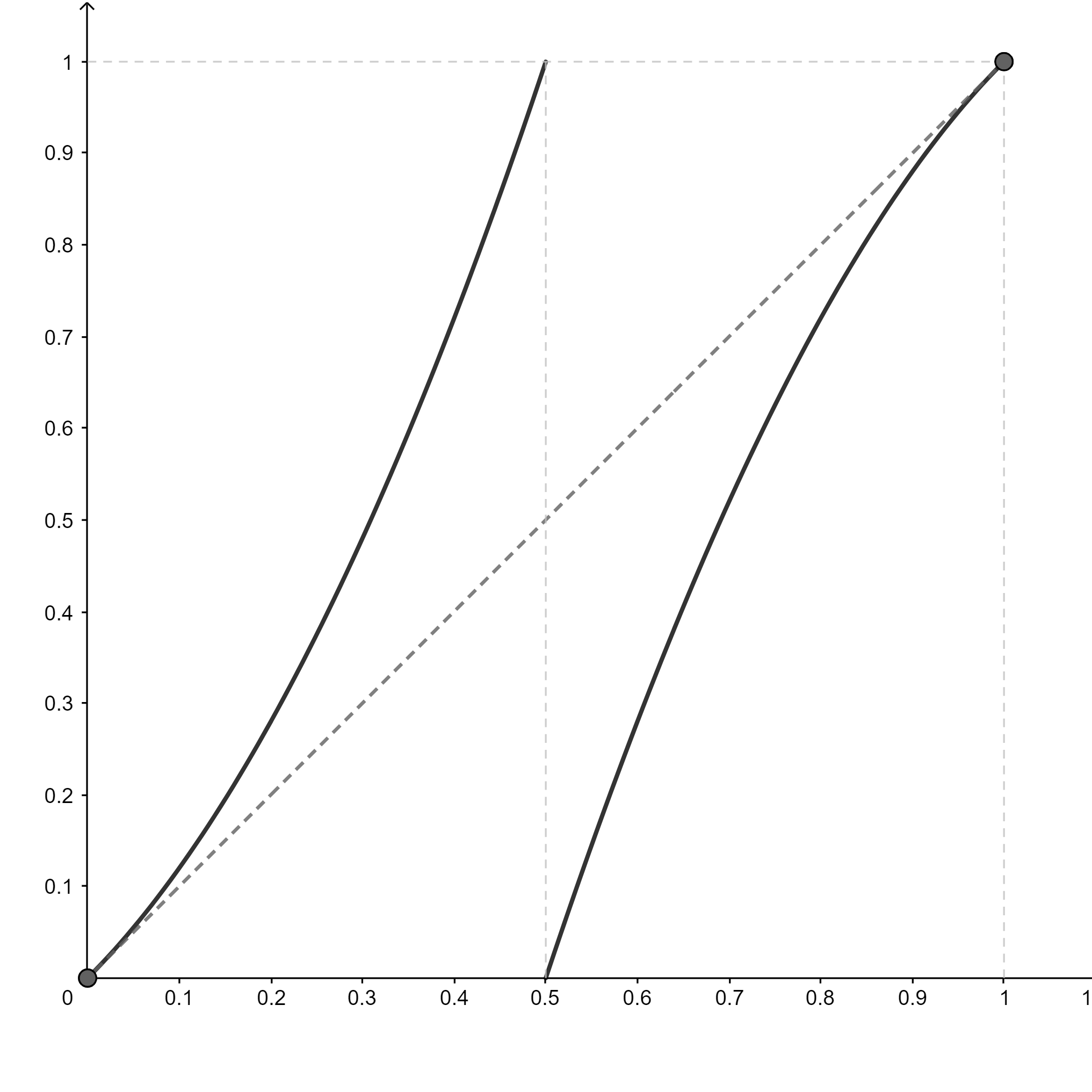}
        \caption{Case $a=1.5$,\; $g_{a}(x)=1.5x+x^2$}
        \label{Case1.5}
    \end{figure}
  
    Consequently the geometric pressure for these maps is strictly convex and decreasing and therefore has no phase transition.
    
        \item For $a=1$, the fixed point $0$ is indifferent, and the map $f_a$ obtained from $g_a(x)=x+2x^2$ is transitive and intermittent, that is, expanding everywhere except for $0$. Note that this is a circle version of the Manneville-Pomeau map on the circle.
    \begin{figure}[h]
        \centering
        \includegraphics[scale=5]{caso1}
        \caption{Case $a=1$,\; $g_{a}(x)=x+2x^2$}
        \label{Case1}
    \end{figure}
    
    Consequently, the Lyapunov exponent associated to the Dirac measure $\delta_0$ is zero and therefore the geometric pressure for this map fits into the case of Figure \ref{caso0} of Theorem \ref{mainthA}. Since this map is of class $C^{1+Lip}$, in particular  $\log|Df|$ is H\"older continuous, we have that Theorem \ref{mainthC} and Corollary \ref{mainthD} do apply for this map.
    
     \item For $a \in (0,1)$, the fixed point $0$ becomes attracting, and two other expanding fixed points emerge. In this case, the map $f_a$ is not transitive, where the small arc containing these three fixed points is invariant.

      
 Consequently the Lyaunov exponent associated to the Dirac measure $\delta_0$ is negative and therefore the pressure for these maps fits into the case of Figure \ref{caso-} of Theorem \ref{mainthA}. However  Theorem \ref{mainthC} and Corollary \ref{mainthD} do not apply to this case.
\end{enumerate}
\end{example}

\begin{example} \normalfont
As mentioned before, the main models for transitive non-expanding local diffeomorphisms on the circle are the Manneville-Pomeau maps obtained from $g_\alpha(x)=x+2^\alpha x^{\alpha+1}$ and equation \ref{difeo}. This maps are of class $C^{1+\alpha}$ but not of class $C^2$. In order to have both the fixed point $0$ and the middle point $1/2$ as roots of the second derivative, and thus proper inflexion points, we need polynomials of higher degree. So a $C^2$ version of the Manneville-Pomeau map is the map $f$ obtained from $g(x)=x+8x^3-8x^4$ in equation \ref{difeo}:
\begin{figure}[h!]
    \centering
    \includegraphics[scale=5]{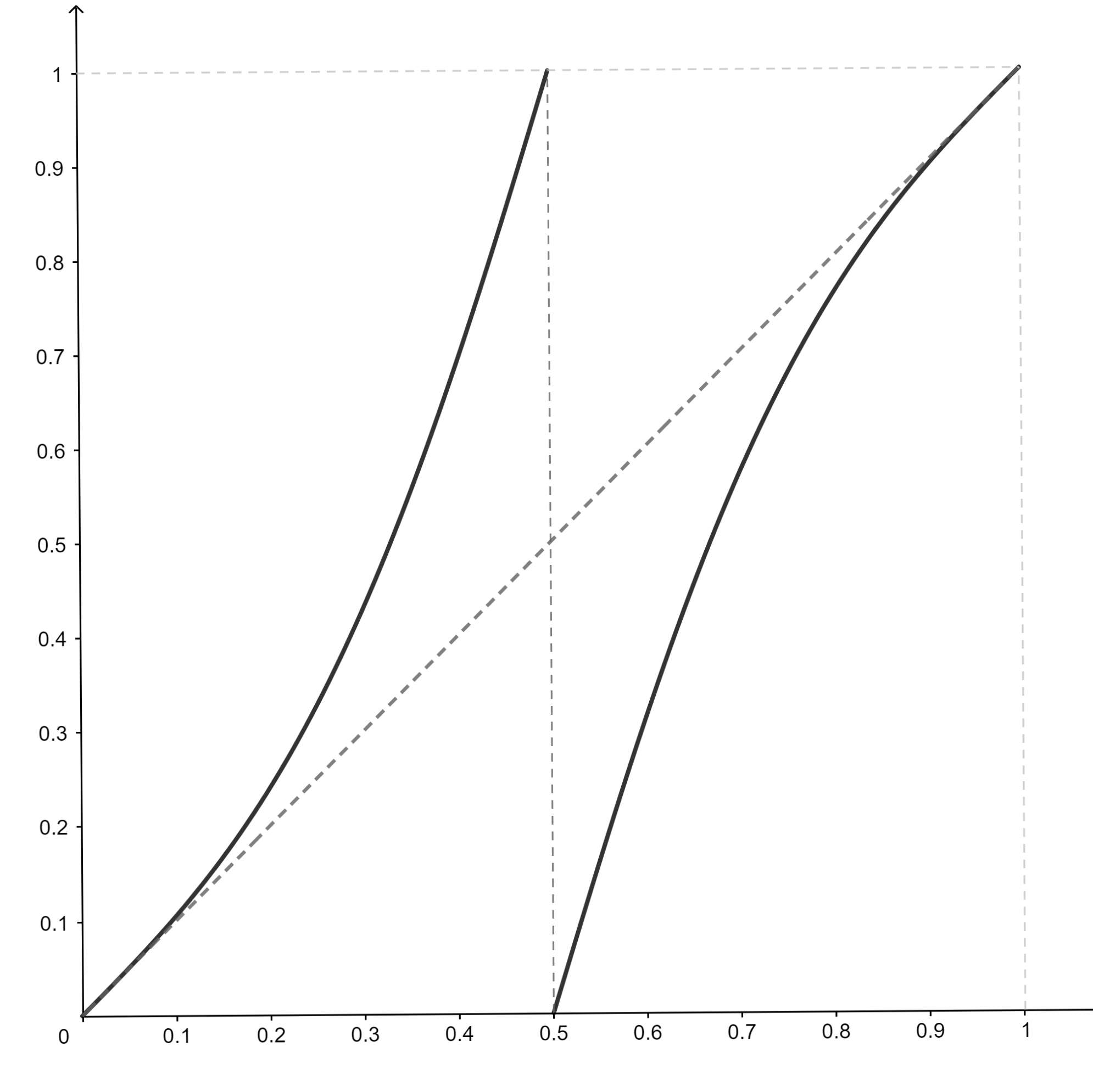}
    \caption{Intermittent map of class $C^2$}
    \label{ManPO.c2}
\end{figure}

We could again consider two families containing this map. The first is for a given $0<k\leq 2$ the family of functions $f_k(x)=kx+(16-8k)x^3+(8k-16)x^4$ that when applied to equation \ref{difeo}, gives a family of $C^2$ local diffeomorphisms on the circle. For $k<1$ the maps are non-transitive map with attracting fixed point, $k=1$ the $C^2$ Manneville-Pomeau map, and for $k>1$ uniformly expanding maps that converge to the Bernoulli map: $2x \mod 1$. Thus we have again a bifurcation, that is completely analogous to Figure \ref{bifurcation}.

Another interesting family of maps that have all the point $x=0$ as an  indifferent fixed point would be the following: consider $p \in [0,1]$ and define $b=b(p):=\left((\frac{1}{2})^{3+p}-\dfrac{4+p}{4+2p}(\frac{1}{2})^{2+p}\right)^{-1}$ and $a=a(p):=\dfrac{-b(4+p)}{4+2p}$ then $f(x)=x+ax^{3+p}+bx^{4+p}$ applied to equation \ref{difeo} gives a family of $C^2$ maps with indifferent fixed points, that is analogous to the family obtained from $g_\alpha(x)=x+2^\alpha x^{\alpha+1}$.
\end{example}

\

\subsection{Calculating the phase transition}\label{t0}

Let $f : \Sc^{1} \rightarrow \Sc^{1}$ be as in Corollary \ref{mainthD}. We obtain the transition parameter 
$$
t_{0} = \inf\{ t \in \R : P_{top}(f , -t\log |Df|) = 0\} = 
$$
$$
\sup\{t>0; \Lo_{f,-t\log|Df|}|_{E} \text{ has the spectral gap property }\} = 
$$
$$
\inf\{t>0; \Lo_{f,-t\log|Df|}|_{E} \text{ has not the spectral gap property }\},
$$ 
moreover $t_{0} \leq 1$.

We are interested in this section to discuss when we can make the transition parameter $t_{0}$ explicit, as well as obtain a characterization of $t_{0}$ in terms of the measures with positive Lyapunov exponent.

Initially we'll see cases where $t_{0} = 1$. The first one is when we have an \emph{a.c.i.p.} with positive Lyapunov exponent.

\begin{proposition}
Let $f : \Sc^{1} \rightarrow \Sc^{1}$ be as in Corollary \ref{mainthD}. Suppose that $f$ admits an $f-$invariant probability $\mu$ such that $\mu$ is absolutely continuous with respect to Lebesgue measure and has  positive Lyapunov exponent. Then $t_{0} = 1$. 
\end{proposition}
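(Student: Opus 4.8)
The plan is to prove the two inequalities $t_{0}\le 1$ and $t_{0}\ge 1$ separately. The bound $t_{0}\le 1$ is already recorded at the start of this section (ultimately from Lemma \ref{L2}, since in the transitive case $P=P_{+}$ and $P(1)\le 0$ by the Ruelle--Margulis inequality). Hence the entire task reduces to showing $t_{0}\ge 1$, that is, $P(t)>0$ for every $t<1$. Since $P(t)=\sup_{\nu}\{h_{\nu}(f)-t\chi_{\nu}(f)\}$ over invariant probabilities $\nu$, it suffices to produce a single invariant measure whose affine function $P_{\nu}(t)=h_{\nu}(f)-t\chi_{\nu}(f)$ remains positive on $(-\infty,1)$; the natural candidate is the given a.c.i.p.\ $\mu$.

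The heart of the matter is the entropy formula $h_{\mu}(f)=\chi_{\mu}(f)$ (Pesin's formula in dimension one), which I would derive from Rokhlin's formula (Theorem \ref{rokf}). This is legitimate because a transitive local diffeomorphism of the circle admits a generating partition into injectivity domains, as established earlier. Writing $\mu=\rho\,dm$ with $m$ Lebesgue measure, the classical change of variables on each injectivity domain (valid already in the $C^{1}$ setting) identifies the Jacobian of $f$ with respect to $\mu$ as
\[
J_{\mu}f=|Df|\cdot\frac{\rho\circ f}{\rho}.
\]
Taking logarithms and integrating against $\mu$, Rokhlin's formula then gives
\[
h_{\mu}(f)=\int\log J_{\mu}f\,d\mu=\int\log|Df|\,d\mu+\int(\log\rho\circ f-\log\rho)\,d\mu=\chi_{\mu}(f),
\]
the last two terms cancelling by $f$-invariance of $\mu$. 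Since $\chi_{\mu}(f)>0$ by hypothesis, the affine map $P_{\mu}(t)=h_{\mu}(f)-t\chi_{\mu}(f)=(1-t)\chi_{\mu}(f)$ is strictly positive for every $t<1$. Consequently $P(t)\ge P_{\mu}(t)>0$ for all $t<1$, which forces $t_{0}\ge 1$, and combined with $t_{0}\le 1$ yields $t_{0}=1$.

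The main obstacle is the rigorous justification of the cancellation $\int(\log\rho\circ f-\log\rho)\,d\mu=0$ and of the applicability of Rokhlin's formula, both of which hinge on $\log\rho\in L^{1}(\mu)$. The negative part is harmless, since $t\mapsto t|\log t|$ is bounded on $(0,1)$, so the delicate point is the positive part, controlling the region where the density $\rho$ blows up. Here one leverages that $\chi_{\mu}(f)<\infty$ (as $\log|Df|$ is continuous on the compact circle) together with finiteness of the entropy $h_{\mu}(f)\le\log\deg(f)$ to keep $\log J_{\mu}f$ integrable, so that the Jacobian identity may be integrated term by term. If one prefers to sidestep these integrability subtleties, an alternative is to invoke directly the standard Pesin entropy formula for absolutely continuous invariant measures of $C^{1+\alpha}$ maps, which grants $h_{\mu}(f)=\chi_{\mu}(f)$ and renders the remaining estimate immediate.
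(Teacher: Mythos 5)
Your argument is correct and takes essentially the same route as the paper: the paper also reduces everything to Pesin's entropy formula $h_{\mu}(f)=\chi_{\mu}(f)$ (citing \cite{P77} directly rather than re-deriving it via Rokhlin's formula), deduces that $\mu$ is an equilibrium state for $-\log|Df|$ at $t=1$, and then obtains $P(t)>0$ for $t<1$ from Remark \ref{decre2}, whereas your affine lower bound $P(t)\ge P_{\mu}(t)=(1-t)\chi_{\mu}(f)$ achieves the same final step a bit more directly. The only caveat is that your hands-on derivation of Pesin's formula leaves the integrability of $\log\rho$ (hence the cancellation $\int(\log\rho\circ f-\log\rho)\,d\mu=0$) only sketched, but the fallback you explicitly offer --- invoking the standard Pesin entropy formula for $C^{1+\alpha}$ maps --- is precisely what the paper does, so no gap remains.
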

\begin{proof}
Since $\mu$ is absolutely continuous with respect to Lebesgue measure, by Pesin's entropy formula (see \cite{P77}), we have that $\mu$ is an equilibrium state with respect to geometric potential $- \log |Df|$. By hypothesis, $\mu$ has positive Lyapunov exponent. Thus, by remark \ref{decre2}, we have $0 = P(1) < P(t)$ for all $t < 1$. It implies that $t_{0} = 1.$
\end{proof}


The second case is associated with the case in which the loss of expansion is only caused by indifferent fixed point. Initially we need to introduce the notion of dynamical dimension. Fix $f : [0 ,1] \rightarrow [0 , 1]$ a piecewise monotonic map. Given $\mu \in \mathcal{M}_{e}(f)$ define $HD(\mu) := \inf\{HD(Y) : \mu(Y) = 1\}$, where $HD(Y)$ is the Hausdorff dimension of $Y$. We define $dD(f) := \sup\{HD(\mu) : h_{\mu}(f) > 0\}$, the dynamical dimension of $f$.

\begin{remark} Another way to think of the dynamical dimension of a map, in some contexts, is to consider the supremum of $HD(\Lambda)$ for all $\Lambda$  repellers for $f$. Take for instance a continuous map $f: X \to X$ of a compact subspace of the closed complex plane $X \subset \bar{\C}$ that can be analyticly extended to a neighbourhood $U(f)$ of $X$. Then \cite[Theorem 10.6.1]{PU10} guarantees that given any measure with positive Lyapunov exponent we obtain measures supported on repellers that approximate both entropy and Lyapunov exponents of $\mu$, thus by \cite[Corollary 4]{HZ12} we obtain an approximation also for the dimension of these measures.
\end{remark}

 In the paper \cite{H93} the aim is to find conditions such that $dD(f_{|A}) \geq z_{A} \geq HD(A)$, where $A$ is a compact $f-$invariant subset, $f_{|A}$ is transitive and $z_{A}  := \inf\{t >0 : P_{top}(f_{|A} , -t\log|Df_{|A}|) \leq 0\}$. In fact, if $|Df| > 1$ except on a finite set of  indifferent fixed points then $z_{A} \geq HD(A)$. In our context $A = \Sc^{1}$ and $z_{A} = t_{0}$, thus applying \cite[Theorem 6]{H93}:

\begin{proposition}
Let $f : \Sc^{1} \rightarrow \Sc^{1}$ be as in the Corollary \ref{mainthD}. Suppose that $|Df| > 1$ except on a finite set of fixed points on which $Df$ is equal to $1$. Then $t_{0} = 1$
\end{proposition}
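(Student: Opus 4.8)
The plan is to reduce the claim to the lower bound on the transition parameter furnished by \cite[Theorem 6]{HR89}, after identifying the quantity $z_A$ of that reference with our $t_0$. Since Section \ref{t0} already records $t_0 \le 1$, it suffices to establish the reverse inequality $t_0 \ge 1$.

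First I would set $A = \Sc^1$ and unwind the definition $z_A := \inf\{t > 0 : P_{top}(f, -t\log|Df|) \le 0\}$. Writing $P(t) = P_{top}(f, -t\log|Df|)$, Corollary \ref{mainthD} tells us that $P$ is strictly decreasing on $(-\infty, t_0)$ with $P(0) = h_{top}(f) > 0$, so $P(t) > 0$ for every $t \in (0, t_0)$, while $P(t) = 0$ for all $t \ge t_0$. Hence $\{t > 0 : P(t) \le 0\} = [t_0, +\infty)$, and therefore $z_A = t_0$. Next I would verify the hypotheses of the cited theorem: the restriction $f_{|A} = f$ is transitive by assumption, and $|Df| > 1$ holds off the prescribed finite set of fixed points on which $Df = 1$, so the loss of expansion is concentrated on indifferent fixed points exactly as the theorem requires. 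The theorem then yields $z_A \ge HD(A)$; since $\Sc^1$ is a smooth one-dimensional manifold, $HD(A) = 1$. Consequently $t_0 = z_A \ge 1$, which together with $t_0 \le 1$ gives $t_0 = 1$.

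The main obstacle is not analytic but a matter of bookkeeping: \cite[Theorem 6]{HR89} is stated for piecewise monotonic maps of the interval $[0,1]$, whereas $f$ acts on $\Sc^1$. I would resolve this by cutting $\Sc^1$ at a boundary point of one of the full branches to produce an associated piecewise monotonic interval map. Because the domains of injectivity of $f$ form a generating partition into full branches (as established in the setup of Theorem \ref{mainthC}), the induced interval map shares the same pressure function, the same family of ergodic measures up to the measure-zero cut, the same dynamical dimension, and an ambient space of Hausdorff dimension $1$. Thus the inequality $z_A \ge HD(A)$ transports verbatim from the interval to the circle, completing the argument.
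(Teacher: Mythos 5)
Your proof follows the same route as the paper: identify $z_{\Sc^1}=t_0$, invoke \cite[Theorem 6]{HR89} to get $z_{\Sc^1}\geq HD(\Sc^1)=1$, and combine with the already-established bound $t_0\leq 1$. Your extra remark about cutting the circle to obtain a piecewise monotonic interval map is a reasonable bit of bookkeeping that the paper passes over silently, but it does not change the argument.
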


In spite of these two cases, in general $t_ {0}$ does not need to be $1$ because, as we will see, $t_{0}$ is related to the support of measures with zero Lyapunov exponent.

\begin{proposition}
Let $f : \Sc^{1} \rightarrow \Sc^{1}$ be as in the Corollary \ref{mainthD}. Then $t_{0} = dD(f)$.
\end{proposition}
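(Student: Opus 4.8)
The plan is to reduce the identity to the classical dimension formula for one-dimensional dynamics together with an elementary manipulation of the pressure function. Recall that $t_{0} = \inf\{t : P(t) = 0\}$ and that, by the previous results, $P$ is continuous with $P(t) = 0$ for all $t \geq t_{0}$; in particular $P(t_{0}) = 0$. Also, since $f$ is transitive with positive topological entropy, every ergodic measure satisfies $\chi_{\mu}(f) \geq 0$, and by the Margulis--Ruelle inequality any ergodic $\mu$ with $h_{\mu}(f) > 0$ must in fact have $\chi_{\mu}(f) > 0$.

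First I would record the dimension formula: for an ergodic measure $\mu$ with $\chi_{\mu}(f) > 0$ one has $HD(\mu) = h_{\mu}(f)/\chi_{\mu}(f)$. In our setting $f$ is a transitive $C^{1}$ local diffeomorphism of the circle, hence piecewise monotone with full branches, so this is exactly the content of the measure dimension theory for interval maps (e.g.\ \cite{HR89} and the references therein): for a positive-exponent ergodic measure the local dimension is $\mu$-almost everywhere equal to $h_{\mu}(f)/\chi_{\mu}(f)$. Consequently
$$
dD(f) = \sup\{HD(\mu) : h_{\mu}(f) > 0\} = \sup\Big\{\frac{h_{\mu}(f)}{\chi_{\mu}(f)} : \mu \in \mathcal{M}_{e}(f),\ h_{\mu}(f) > 0\Big\}.
$$

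For the upper bound $dD(f) \leq t_{0}$, I would use $P(t_{0}) = 0$. By the variational principle, for every ergodic $\mu$ we have $h_{\mu}(f) - t_{0}\chi_{\mu}(f) \leq P(t_{0}) = 0$. If moreover $h_{\mu}(f) > 0$, then $\chi_{\mu}(f) > 0$ and dividing gives $h_{\mu}(f)/\chi_{\mu}(f) \leq t_{0}$; taking the supremum yields $dD(f) \leq t_{0}$. For the reverse inequality, fix $t < t_{0}$. Then $P(t) > 0$, and by Theorem \ref{mainthC} the operator $\Lo_{f,-t\log|Df|}$ has the spectral gap property, so Lemma \ref{Lemapress} provides an equilibrium state $\mu_{t} := \mu_{-t\log|Df|}$ (ergodic, indeed mixing, by the spectral gap property) with $h_{\mu_{t}}(f) - t\chi_{\mu_{t}}(f) = P(t) > 0$. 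Hence $h_{\mu_{t}}(f) > t\chi_{\mu_{t}}(f) \geq 0$, so $h_{\mu_{t}}(f) > 0$ and $\chi_{\mu_{t}}(f) > 0$, giving $HD(\mu_{t}) = h_{\mu_{t}}(f)/\chi_{\mu_{t}}(f) > t$. Therefore $dD(f) \geq HD(\mu_{t}) > t$ for every $t < t_{0}$, and letting $t \nearrow t_{0}$ we conclude $dD(f) \geq t_{0}$. Combining the two bounds gives $t_{0} = dD(f)$.

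The main obstacle is the first step: justifying the dimension formula $HD(\mu) = h_{\mu}(f)/\chi_{\mu}(f)$ in this non-uniformly expanding setting. Because $f$ need not be uniformly expanding, one cannot simply quote the expanding-map theory; the point is that an ergodic measure with $\chi_{\mu}(f) > 0$ expands $\mu$-almost everywhere, which is enough to run the Hofbauer--Raith/Ledrappier pointwise-dimension argument for piecewise monotone maps. Once this is in hand, everything else is elementary convex analysis of the pressure function together with the existence and ergodicity of the equilibrium states supplied by the spectral gap.
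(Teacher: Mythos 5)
Your proof is correct and follows essentially the same route as the paper: both arguments rest on the dimension formula $HD(\mu)=h_{\mu}(f)/\chi_{\mu}(f)$ for ergodic measures with positive exponent, deduce $dD(f)\le t_{0}$ from $P(t_{0})=0$, and get $dD(f)\ge t_{0}$ from equilibrium states $\mu_{t}$ with $P_{\mu_{t}}(t)=P(t)>0$ for $t<t_{0}$. The only difference is that the paper settles the dimension formula (your flagged ``main obstacle'') by citing \cite[Corollary 4]{HZ12}, which gives exactly this identity for $C^{1}$ maps with positive Lyapunov exponent.
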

\begin{proof}
On the one hand, applying   \cite[Corollary 4]{HZ12} in our context we have that: if $\mu$ is an $f-$invariant ergodic Borel probability measure with $\chi_{\mu}(f) > 0$ then $HD(\mu) = \frac{h_{\mu}(f)}{\chi_{\mu}(f)}$. In particular, $P_{\mu}(HD(\mu)) = 0$. Thus, take $0 < t < t_{0}$ and $\mu_{t}$ such that $P_{\mu_{t}}(t) = P(t) > 0$. Hence, $0 < h_{\mu_{t}}(f) \leq\chi_{\mu_{t}}(f) $ and then
$$
P_{\mu_{t}}(HD(\mu_{t})) = 0 \Rightarrow HD(\mu_{t}) > t \Rightarrow dD(f) > t, \, \forall t < t_{0} \Rightarrow dD(f) \geq t_{0}.
$$
On the other hand, suppose by absurd that $dD(f) > t_{0}$. Then there exists $\mu \in \mathcal{M}_{e}(f)$ with $h_{\mu}(f)  >0$ and $HD(\mu) > t_{0}$. Therefore
$$
\frac{h_{\mu}(f)}{\chi_{\mu}(f)} > t_{0} \Rightarrow P_{\mu}(t_{0}) > 0 \Rightarrow P(t_{0})  > 0,
$$
this is absurd.
\end{proof}

The previous result is analogous to the generalised Bowen's formula obtained in \cite{PRL19} in the context of multimodal maps.

This discussion leads us to the following question:

\begin{mainquestion}
There exists a dense subset of the space of transitive, non invertible and non uniformly expanding local diffeomorphism on the circle whose thermodynamical phase transition occurs in the parameter $t =1$ ?
\end{mainquestion}

\

\subsection{Non-transitive case}\label{nontra}

In Theorem \ref{mainthC} and Corollary \ref{mainthD} it was assumed the transitivity hypothesis, the purpose of this section is to present a classic result on piecewise monotonic maps in order to indicate some perspectives on this case.

Given $f : [0 , 1] \rightarrow [0 , 1]$ a piecewise continuous and monotonic map, the papers \cite{H86}, \cite{HR89} and \cite{W88} guarantee us that a decomposition theorem on the nonwandering set $\Omega(f)$ holds:
$$
\Omega(f) = \bigcup_{j =1}^{\infty}L_{j} \cup L_{\infty} \cup W \cup P
$$
where the sets $L_{j}, L_{\infty}$ and $P$ are closed and $f-$invariant, and the intersection of two different sets in this union is finite or empty. Moreover,
\begin{itemize}
\item 
$f_{|L_{j}}$ is topologically conjugated to a 
transitive subshift of finite type.
\item $L_{\infty}$ is the disjoint union of finitely many compact, $f-$invariant sets $N_{i}$, such that $N_{i}$ is a Cantor-like and $f_{|N_{i}}$ is  minimal. In fact, there are only finitely many ergodic invariant Borel probability measures on each $N_{i}$ and $h_{top}(f_{|L_{\infty}}) = 0$.
\item $W$ consists of nonperiodic points, which are isolated in $\Omega(f)$.
\item $P$ consists of periodic points contained in nontrivial intervals $K \subset \Sc^{1}$ with the property that $f^{n}$ maps $K$ monotonically into $K$ for some n.
\item Given $\mu \in \mathcal{M}_{e}(f)$ there exists $\eta \in \mathcal{M}_{e}(f_{|L_{j}})$ such that $h_{\mu}(f) \leq h_{\eta}(f)$.
\end{itemize}

In particular, taking $f : \Sc^{1} \rightarrow \Sc^{1}$ a local diffeomorphism, define 
$$
P_{j}(t) := P_{top}(f_{|L_{j}} , -t\log |Df|) \,\;,\,\; P_{\infty}(t) := P_{top}(f_{|L_{\infty}} , -t\log |Df|)
$$
$$
\text{and } \, P_{p}(t) := P_{top}(f_{|P} , -t\log |Df|).
$$
 Note that $P_{\infty} \equiv 0$ and $P_{p}$ is linear in each of the intervals $(-\infty , 0]$ and $[0 , +\infty)$. 
In general, we expect that $f$ can have infinite thermodynamical phase transitions with respect to $-\log|Df|$.

Suppose now that  $f$ is not invertible and non transitive, $Df$ is H\"older continuous and that there is a unique $L_{j}$. Then
 $$
 P_{top}(f , -t\log|Df|) = \max\{P_{j}(t), P_{p}(t)\}
 $$ and $h_{top}(f_{|L_{j}}) = h_{top}(f) = \log deg(f)$. Hence, $f_{|L_{j}}$ is topologically conjugated to a full shift. We cannot apply directly Theorem \ref{mainthD} on $f_{|L_{j}}$ because $L_{j}$ will not be the circle or a union of intervals (in fact, typically $L_{j}$ will be a Cantor set). Despite this fact, we can follow the proof of  Theorem \ref{mainthD} and the fundamental point for its conclusion are the estimates about the spectral radius and essential spectral radius of the transfer operator $\mathcal{L}_{f_{|L_{j}}, -t\log|Df|}|_{C^{\alpha}}$.
 Thereby, if it is possible to overcome such obstacle we would conclude that $f$ can have thermodynamical phase transitions with respect to $-\log|Df|$ in at most in three points.

\

\subsection{A little bit about the higher dimensional context}\label{hdc}

In this section we want to discuss a little about the Problem \ref{probA} and Conjecture \ref{conjA} presented in the introduction. 

We start with an example. Given the expanding dynamics $f : \Sc^{1} \xrightarrow[x \mapsto 2x \text{ mod } 1]{} \Sc^{1} $ and the irrational rotation $R_{\alpha} : \Sc^{1} \xrightarrow[y \mapsto y + \alpha]{} \Sc^{1}$, take the skew product 
$$
F : \Sc^{1} \times \Sc^{1} \xrightarrow[(x, y) \mapsto \big(f(x) , R_{\alpha}(y) \big)]{} \Sc^{1} \times \Sc^{1}.
$$
Given the H\"older potential $\phi : \Sc^{1} \times \Sc^{1} \rightarrow \R$ define the H\"older continuous potential 
$$\tilde{\phi} : \Sc^{1}  \xrightarrow[x \mapsto \int \phi(x , y) d\Leb(y) ]{} \R.$$   Note that
$$
P_{top}(F , \phi) = \sup_{\mu \in \mathcal{M}_{1}(f)}\{h_{\mu}(f) + h_{Leb}(R_{\alpha}) + \int \phi(x , y)dLeb(y)d\mu(x) \,\}= P_{top}(f , \tilde{\phi}).
$$
Since $f$ is an expanding dynamics then $C^{\alpha}(\Sc^{1}  , \R) \ni g \mapsto P_{top}(f , g)$ is analytical. We conclude that $C^{\alpha}(\Sc^{1}  \times \Sc^{1} , \R) \ni \phi \mapsto P_{top}(F , \phi)$ is analytical, even though $F$ is not expanding or hyperbolic dynamics.

This example shows in high dimension we cannot expect to obtain thermodynamical phase transition for all local diffeomorphism that are not expanding or hyperbolic. Note that the previous example is not topologically conjugated to an expanding or hyperbolic dynamics. Moreover there is no mixing between the central direction and the expanding direction, that is a rigid condition. Thus, we propose the following questions:

\begin{mainquestion}
Let $f : M \rightarrow M$ be a local diffeomorphism such that $f$ is topologically conjugated to an expanding or hyperbolic dynamics, but $f$ is not an expanding or hyperbolic dynamics. Then $f$ has thermodynamical phase transition with respect to some H\"older continuous potential ?
\end{mainquestion}

\begin{mainquestion}
Generically a local diffeomorphism is expanding, hyperbolic or has thermodynamical phase transition with respect to some H\"older continuous potential ?
\end{mainquestion}

Remember that the paper \cite{Kl20} assures us that for any map of the circle which is expanding outside an arbitrarily flat neutral point, the set of H\"older potentials such that $\mathcal{L}_{f,\phi|C^{\alpha}}$ has the spectral gap property is dense, in the uniform topology. Thus it leads us to the following question:

\begin{mainquestion}
Let $f : M\rightarrow M$ be  $C^{1}-$local diffeomorphism on a manifold $M$, with $h_{top}(f) > 0$. The set of potentials $\phi$, in a suitable Banach space (H\"older continuous or smooth functions),  such that  $\mathcal{L}_{f, \phi}$ has not the spectral gap property acting on a suitable Banach space (H\"older continuous or smooth functions) can be dense or residual in the uniform topology ?
\end{mainquestion}


\vspace{.3cm}
\subsection*{Acknowledgements.}
This work is part of the second author's PhD thesis at Federal University of Bahia. TB was partially supported by
CNPQ (Grants PQ-2021) and CNPq/MCTI/FNDCT project 406750/2021-1, Brazil. VC was supported by CAPES-Brazil. The authors are deeply
grateful to Armando Castro, Paulo Varandas and Juan Rivera-Letelier for useful comments.


\begin{appendix}

\section{Attracting periodic orbits}\label{Ap1}

In this appendix we first transcribe part of the text from V. Ara\'ujo  \cite{A20}, concerning attracting periodic orbits, then we apply it to our context.

\

In what follows $M$ is a connected compact finite $d$-dimensional manifold $M$, with $d \geq 2$. Let $f: M \rightarrow M$ be a $C^{1}$ map such that $\inf _{x \in M}\|D f(x)\|>0$. The Subadditive Ergodic Theorem  ensures that the largest asymptotic growth rate

$$
\chi(x)=\lim _{n \rightarrow+\infty} \ln \left\|D f^{n}(x)\right\|^{1 / n}
$$
exists for all $x$ on a total probability subset since $\ln ^{+}\|D f\|=\max \{0, \ln \|D f\|\}$ is $\mu$-integrable for each $f$-invariant probability measure $\mu$.

In what follows we write $A_{k}^{-}(x)=\lim \inf _{n \rightarrow+\infty} n^{-1} \sum_{j=0}^{n-1} \ln \left\|D f^{k}\left(f^{k j} x\right)\right\|$. We recall that $p \in M$ belongs to a periodic orbit (with period $\tau$) if there exists $\tau \in \mathbb{Z}^{+}$so that $f^{\tau} p=p$. This periodic orbit $\mathcal{O}_{f}(p)=\left\{p, f p, \ldots, f^{\tau-1} p\right\}$ is attracting (a sink, for short) if there exists a neighborhood $V_{p}$ of $p$ such that $\left.f^{\tau}\right|_{V_{p}}: V_{p} \rightarrow V_{p}$ is a contraction: there exists $0<\lambda<1$ so that $\operatorname{dist}\left(f^{\tau} q, f^{\tau} r\right)<\lambda \operatorname{dist}(q, r), \forall q, r \in V_{p}$. Equivalently, $\left\|D f^{\tau}(p)\right\|<\lambda$ for some $\lambda \in(0,1)$. 

With this set up, V. Ara\'ujo proves that a measure with negative Lyapunov exponents is concentrated on a sink:

\begin{corollary} \cite{A20}\label{cap1}
Let $\mu$ be an invariant probability measure with respect to a $C^{1} \operatorname{map} f: M \rightarrow$ $M$ such that $\inf _{x \in M}\|D f(x)\|>0$ and $\chi(x)<0, \mu$-a.e. $x \in M$. Then $\mu$ decomposes as $\tilde{\mu}+\sum_{i \geq 1} \mu_{i}$, where each $\mu_{i}$ is a Dirac mass equidistributed on a periodic attracting orbit of $f$ (a sink), the sum is over at most countably many such orbits, and $\tilde{\mu}$ (which might be the null measure) satisfies $A_{1}^{-}(x) \geq 0, \tilde{\mu}$-a.e. $x \in M$. In addition, if $\mu$ is $f$-ergodic, then $\mu$ is concentrated on the orbit of a periodic attractor (sink).
\end{corollary} 

\

Now consider a given $C^{1}-$ocal diffeomorphism on the circle $f: \Sc^1 \to \Sc^1$ and an $f$-ergodic measure $\mu$ with
negative Lyapunov exponent $\chi_{\mu}(f) < 0$. Given the restriction on the dimension of $M$, in order to apply Corollary \ref{cap1}, it is sufficient to consider the product map $f\times f$ acting on $\Sc^1 \times \Sc^1$ and the ergodic measure $\mu \times \mu$, with $\chi_{\mu\times\mu}(f\times f)=\chi_\mu(f)<0$. We conclude that $\mu \times \mu$ is concentrated on the orbit of a sink, then so is $\mu$. To sum up, an ergodic measure with negative Lyapunov exponent for a $C^1$ map of the circle is necessarily supported on an attracting orbit.

\end{appendix}

\bibliographystyle{alpha}

\end{document}